\newtheorem{theorem}{Theorem}[section]
\newtheorem{lemma}[theorem]{Lemma}
\newtheorem*{lemma*}{Lemma}
\newtheorem{proposition}[theorem]{Proposition}
\theoremstyle{definition}
\newtheorem{definition}[theorem]{Definition}
\newtheorem{example}[theorem]{Example}
\newtheorem{question}[theorem]{Question}
\theoremstyle{remark}
\newtheorem{remark}[theorem]{Remark}
\numberwithin{equation}{section}
\newcommand{\abs}[1]{\lvert#1\rvert}
\newcommand{\Abs}[1]{\Big\lvert#1\Big\rvert}
\newcommand{\norm}[1]{\lVert#1\rVert}
\newcommand{\C}{\mathbb{C}}
\newcommand{\W}{\mathscr{W}}
\newcommand{\E}{\mathcal{E}}
\newcommand{\R}{\mathbb{R}}
\newcommand{\X}{\mathbb{X}}
\newcommand{\U}{\mathbb{U}}
\newcommand{\Y}{\mathbb{Y}}
\newcommand{\dtext}{\textnormal d}
\newcommand{\onto}{\xrightarrow[]{{}_{\!\!\textnormal{onto\,\,}\!\!}}}
\newcommand{\into}{\xrightarrow[]{{}_{\!\!\textnormal{into\,\,}\!\!}}}
\newcommand{\bydef}{\stackrel {\textnormal{def}}{=\!\!=} }
\DeclareMathOperator{\re}{Re}
\DeclareMathOperator{\im}{Im}
\DeclareMathOperator{\loc}{loc}
\DeclareMathOperator{\diff}{Diff}
\def\le{\leqslant}
\def\ge{\geqslant}
\begin{document}

\title[Hopf-Harmonics]{Monotone Hopf-Harmonics}

\author[T. Iwaniec]{ Tadeusz Iwaniec}
\address{Department of Mathematics, Syracuse University, Syracuse,
NY 13244, USA}
\email{tiwaniec@syr.edu}

\author[J. Onninen]{Jani Onninen}
\address{Department of Mathematics, Syracuse University, Syracuse,
NY 13244, USA}
\email{jkonnine@syr.edu}
\thanks{ T. Iwaniec was supported by the NSF grant DMS-1802107.
J. Onninen was supported by the NSF grant DMS-1700274.}

\subjclass[2010]{Primary 31A05; Secondary  35J25}


\keywords{Hopf-Laplace equation, Holomorphic quadratic differentials, Monotone mappings, harmonic mappings, the principle of non-interpenetration of matter}

\maketitle

\begin{abstract}

The present paper introduces the concept of monotone Hopf-harmonics in 2D as an alternative to harmonic homeomorphisms. It opens a new area of study in Geometric Function Theory (GFT).  Much of the foregoing is motivated by the principle of non-interpenetration of matter in the mathematical theory of Nonlinear Elasticity (NE). The question we are concerned with is whether or not a Dirichlet energy-minimal mapping between Jordan domains with a prescribed boundary homeomorphism remains injective in the domain. The classical theorem of Rad\'{o}-Kneser-Choquet asserts that this is the case when the target domain is convex. An alternative way to deal with arbitrary target domains is to minimize the Dirichlet energy subject to only homeomorphisms and their limits. This leads to the so called \textit{Hopf-Laplace equation}. Among its solutions (some rather surreal) are continuous monotone mappings of Sobolev class  $\mathscr W^{1,2}_{\textnormal{loc}}$,  called \textit{monotone Hopf-harmonics}. It is at the heart of the present paper to show that such solutions are correct generalizations of harmonic homeomorphisms and, in particular, are legitimate deformations of hyperelastic materials  in the modern theory of NE. We make this clear by means of several examples.
\end{abstract}


\section{Introduction}

Throughout this text $\,\mathbb X\,$ and $\,\mathbb Y\,$ are bounded simply connected Jordan domains in the complex plane $\mathbb C$. Their boundaries $\,\partial\mathbb X\,$ and $\,\partial\mathbb Y\,$ are  positively oriented (counterclockwise) simple closed curves; when traveling in such direction the domains remain in the left hand side. We are concerned with orientation preserving homeomorphisms $\,h : \overline{\mathbb X} \onto \overline{\mathbb Y}\,$ of Sobolev class $\,\mathscr W^{1,2}(\mathbb X, \mathbb R^2)\,$ and their uniform limits.
The greatest lower bound of the Dirichlet energy is applicable to all such homeomorphisms:
\[
\mathscr E_\X[h]  \bydef \int_\mathbb X |Dh(x)|^2 \,\textnormal{d} x\; \geqslant  \,2 \int_\mathbb X \textnormal{det} Dh(x) \,\textnormal{d} x\;=  2\, |\mathbb Y|
\]

Equality occurs if and only if $\,h : \mathbb X \onto \mathbb Y\,$ is conformal whose  existence is guaranteed by
the Riemann mapping theorem.  Every conformal map  $\,f : \mathbb X \onto \mathbb Y\,$ between Jordan domains extends as a homeomorphism between the closed regions, still denoted by $\,f : \overline{\mathbb X} \onto \overline{\mathbb Y}\,$.
In other words, conformal mappings solve the so-called \textit{frictionless} minimization problem~\cite{Bac, Ba11, Cib, CN}. This means that the mappings in question are allowed to slide along the boundary (no constraints on the boundary values).
However, prescribing arbitrarily the  boundary data of a conformal mapping is an ill-posed problem. This pertains not only to the Cauchy-Riemann equations but also  to all first order elliptic systems in the complex plane. The situation is dramatically different if we move to the realm of second order PDEs, such as  complex-valued harmonic mappings $\,h = u + i \,v\,$ in which $\,u\,$ and $\,v\,$ need not be harmonic conjugates.  There always exists a unique harmonic extension of a continuous boundary map. When the target domain $\, \mathbb Y\,$ is convex the celebrated theorem of Rad\'{o}-Kneser-Choquet ~\cite{DurenBook}  asserts  that the extension is a homeomorphism.

\begin{theorem}\label{RKC} \textnormal{(RKC-Theorem)}
Let $\Y$ be a convex domain in $\mathbb C$ and $g \colon \partial \X \onto \partial \Y$  a homeomorphism. Then there exists a unique harmonic homeomorphism $h \colon \X \onto \Y$  (actually $\mathscr C^\infty$-diffeomorphism) which extends continuously up to  $\,\partial \mathbb X\,$  and coincides with $g$ on $\partial \X$.
\end{theorem}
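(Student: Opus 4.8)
The plan is to reduce to $\X=\DD$ (the unit disk), realize $h$ as a Poisson integral, trap its image inside $\Y$ using convexity, prove that its Jacobian never vanishes, and then upgrade local injectivity to a global homeomorphism by a winding–number count. For the reduction, fix a Riemann map $\phi\colon\DD\onto\X$, which by Carathéodory's theorem extends to a homeomorphism $\overline\DD\onto\overline\X$ since $\partial\X$ is a Jordan curve; because the Laplacian transforms conformally under holomorphic precomposition, solving the problem on $\X$ for the data $g$ is equivalent, under $h\mapsto h\circ\phi$, to solving it on $\DD$ for the data $g\circ\phi|_{\partial\DD}$ — harmonicity, continuity up to the boundary, the boundary values, and the homeomorphism property all correspond — so we may assume $\X=\DD$. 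Then the Poisson extension $h=P[g]$ is harmonic in $\DD$, continuous on $\overline\DD$, and equals $g$ on $\partial\DD$; applying the maximum principle to the real and imaginary parts of the difference of two solutions gives uniqueness, in fact among \emph{all} harmonic maps with boundary values $g$, which is stronger than claimed.

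To see that $h(\DD)\subseteq\Y$, write $h=u+iv$ and recall that $\overline\Y$ is the intersection of the closed half–planes $H=\{w:\re(e^{-i\theta}w)\le c\}$ containing it. For each such $H$ the function $\re(e^{-i\theta}h)$ is harmonic in $\DD$ and $\le c$ on $\partial\DD$ because $g(\partial\DD)=\partial\Y\subseteq H$, hence $\le c$ throughout $\DD$; intersecting over all $H$ gives $h(\DD)\subseteq\overline\Y$. The strong maximum principle then excludes interior points mapping onto $\partial\Y$: at such a point $\re(e^{-i\theta}h)$ would attain an interior maximum for a supporting half–plane, forcing it to be constant and hence $\partial\Y=g(\partial\DD)$ to lie on a line, impossible for the boundary of a bounded convex domain.

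The crux — and the step I expect to be the genuine obstacle, being exactly where convexity enters in an essential way — is that $\det Dh>0$ throughout $\DD$. Suppose $\det Dh(x_0)=0$ for some $x_0\in\DD$. Then the singular matrix $Dh(x_0)$ has a nonzero left null vector $(a,b)$, so the harmonic function $\varphi:=au+bv$ satisfies $\nabla\varphi(x_0)=0$, and $\varphi$ is nonconstant (otherwise $h(\partial\DD)$ would lie on a line). Being locally the real part of a holomorphic function with a critical point at $x_0$, the level set $\{\varphi=\varphi(x_0)\}$ near $x_0$ consists of $k\ge2$ analytic arcs through $x_0$, cutting a punctured neighborhood into $2k\ge4$ sectors on which $\varphi-\varphi(x_0)$ alternates in sign. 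Along $\partial\DD$, by contrast, one has $\varphi=L\circ g$ with $L$ the affine function $w\mapsto aw_1+bw_2$, and since $\Y$ is convex the line $\{L=\varphi(x_0)\}$ cuts $\partial\Y$ into exactly two arcs, so each of $\{\varphi>\varphi(x_0)\}$ and $\{\varphi<\varphi(x_0)\}$ meets $\partial\DD$ in a single arc (the degenerate cases, where this line misses or merely touches $\overline\Y$, are again ruled out by the maximum principle, as $\varphi(x_0)$ is an interior value of $\varphi$). A maximum–principle argument shows that inside $\DD$ each positive sector at $x_0$ can be joined, within $\{\varphi>\varphi(x_0)\}$, to the positive boundary arc, and each negative sector, within $\{\varphi<\varphi(x_0)\}$, to the negative boundary arc. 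Choosing two positive sectors that are not adjacent in the cyclic order, the two joining paths together with short spokes to $x_0$ and a sub–arc of the connected positive boundary arc bound a Jordan curve $\Gamma\subseteq\{\varphi\ge\varphi(x_0)\}$; the negative sectors separating the two chosen ones then lie on opposite sides of $\Gamma$, yet each is joined to the negative boundary arc by a path in $\{\varphi<\varphi(x_0)\}$, a set disjoint from $\Gamma$ — so the connected negative boundary arc would have to meet both sides of $\Gamma$, a contradiction. Hence $\det Dh\neq0$, and since $h$ maps into $\Y$ with $g$ orientation–preserving, $\det Dh>0$.

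Finally, $\det Dh>0$ makes $h$ a real-analytic local diffeomorphism, in particular open. For $w_0\in\Y$ the Brouwer degree $\deg(h,\DD,w_0)$ is defined, since $w_0\notin h(\partial\DD)=\partial\Y$, and equals the winding number of $g$ about $w_0$, namely $1$; on the other hand $h^{-1}(w_0)$ is a finite set of regular points (it avoids $\partial\DD$) and $\deg(h,\DD,w_0)=\#h^{-1}(w_0)$ because every local degree equals $+1$. Thus $h^{-1}(w_0)$ is a single point, so $h\colon\DD\onto\Y$ is a bijective local diffeomorphism — a $\mathscr C^\infty$-diffeomorphism — and, since $h|_{\partial\DD}=g$ is a homeomorphism onto $\partial\Y$, it extends to a homeomorphism $\overline\DD\onto\overline\Y$. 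This proves the theorem for $\X=\DD$, and the opening reduction transfers it, via $\phi$, to arbitrary Jordan domains $\X$.
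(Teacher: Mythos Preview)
The paper does not prove this theorem; it states the RKC theorem as a classical result and cites Duren's book \cite{DurenBook}. So there is no in-paper proof to compare against.

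Your argument is the standard Kneser--Choquet proof and is correct in outline. The reduction to $\DD$ via a Riemann map and Carath\'eodory extension is fine; the Poisson extension and uniqueness by the maximum principle are routine; the containment $h(\DD)\subset\Y$ via supporting half-planes is exactly the convexity input. The crux step --- nonvanishing of the Jacobian --- is the classical argument: a critical point of a real linear combination $\varphi=au+bv$ forces at least four alternating nodal sectors, while convexity of $\Y$ forces the boundary trace of $\varphi$ to change sign only twice, and these are topologically incompatible. Your Jordan-curve separation argument is a bit informal (the construction of $\Gamma$ and the claim that the two negative sectors lie on opposite sides of it could be made more precise), but it captures the correct mechanism; Duren's exposition handles this step by counting components of $\{\varphi>\varphi(x_0)\}$ and invoking the maximum principle to force each to reach $\partial\DD$. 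The degree-theoretic upgrade from local to global diffeomorphism is standard.

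One small point: you assert ``$g$ orientation-preserving'' to conclude $\det Dh>0$ rather than $\det Dh<0$. The theorem as stated does not assume this, but the paper adopts this convention globally (see the footnote to Theorem~\ref{thm2}), so you are consistent with the paper's standing hypotheses; in any case the conclusion ``homeomorphism'' is insensitive to the sign.
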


In contrast to the case of harmonic conjugates it is not true  that a  harmonic extension of a homeomorphism  $\, h :\partial \mathbb X  \onto \partial \mathbb Y\,$
 gives rise to  a homeomorphism $\, h : \mathbb X \onto \mathbb Y\,$.  Even more precise statement holds, if the target $\Y$ is not convex there always exists a boundary homeomorphism  $\,h : \partial \mathbb X \onto \partial  \mathbb Y\,$
whose harmonic extension takes points in $\,\mathbb X\,$ beyond $\,\overline{\mathbb Y}\,$. This was already observed by Choquet~\cite{Choquet}, see also~\cite{AN}. Nevertheless, if (by chance)  for some homeomorphic boundary data the harmonic extension takes $\mathbb X$ onto $\mathbb Y$, then it remains injective in $\mathbb X$.

Harmonic  mappings have resulted from the outer variation of the Dirichlet integral, leading to the Lagrange-Euler equation. This equation  is not available when the energy integral is restricted to homeomorphisms; injectivity can be lost  upon the outer variation.

In different circumstances, Sobolev homeomorphisms
  are at the core of mathematical principles of Nonlinear Elasticity (NE) in which the Direct Method in the Calculus of Variations is the essential tool in finding the energy-minimal deformations.    It is from these perspectives  that one should look at the mappings $\,h \colon \overline{\mathbb X} \onto \overline{\mathbb Y}\,$   which are $\,\mathscr W^{1,2}\,$-weak limits of Sobolev homeomorphisms. If the target $\Y$ is a Lipschitz  domain, then such mappings are automatically uniform limits of homeomorphisms and, as such, become monotone. The concept of monotonicity is due to Morrey~\cite{Mor}.  By Morrey's definition, a continuous $\,h \colon \overline{\X} \onto \overline{\Y}\,$ (more generally, between any compact metric spaces) is monotone if every fiber $h^{-1} (y)$ of a point $y \in \overline{\Y} $ is connected in ${\overline{\X}}$. Consequently, as shown by  Whyburn~\cite{Wh} see also~\cite[p.138]{Whb}, the preimage of any connected set in $\overline{ \Y }$ is connected in ${\overline{\X}}$. Youngs' approximation theorem~\cite{Yo} tells us that all continuous monotone mappings $\,h: \overline{\mathbb X} \onto \overline{\mathbb Y}\,$ (in general, between 2D topological manifolds) are exactly the uniform limits of homeomorphisms $\, h_j \colon \overline{\mathbb X} \onto \overline{\mathbb Y}\,$.
  
 It is legitimate to perform the inner variation of the Dirichlet integral $\, \int_\mathbb X  |Dh(x)|^2 \textnormal d x\,$ subjected to monotone mappings $\,h \colon  \overline{\mathbb X} \onto \overline{\mathbb Y}\,$ of Sobolev class $\,\mathscr W^{1,2}(\mathbb X, \mathbb Y)$. This gives rise to the so-called \textit{Hopf-Laplace equation},
\begin{equation}\label{eq:hopf-laplace}
\frac{\partial }{\partial \bar z} (h_z \overline{h_{\bar z}}) =0 \,, 
\end{equation}
for $h \in \mathscr W^{1,2}(\mathbb X, \mathbb Y)$.  In ~\cite{Heb}  such solutions are called \emph{weakly Noether harmonic maps}.  We shall also discuss more general solutions  $\,h \in \mathscr W^{1,2}_{\textnormal{loc}}(\mathbb X, \mathbb C)\,$. This  places their \textit{Hopf product} $\,h_z \overline{h_{\bar z}}\,$ in $\mathscr L^1_{\textnormal{loc}} (\X)$, whose  Cauchy-Riemann derivative  $\frac{\partial }{ \partial \bar z} (h_z \overline{h_{\bar z}})\,$ is a Schwartz distribution. By Weyl's lemma $\,h_z \overline{h_{\bar z}}$ is in fact a holomorphic function.  We shall simply refer to them as the natural solutions of the  Hopf-Laplace equation. It is worth noting at this point that conformal change of the independent variable $\,z\in \mathbb X\,$ preserves the equation~\eqref{eq:hopf-laplace}. Thus we may assume, upon conformal transformation, that $\,\mathbb X \,$ is a unit disk. This observation explains why we shall not impose any regularity on $\,\mathbb X\,$,  except for being a Jordan domain. However some regularity of the target domain $\,\mathbb Y\,$ will be essential.

 It is clear that every harmonic mapping solves the Hopf-Laplace equation. Eells and Lemaire~\cite{EL2} inquired about the possibility of a converse result  for mappings with almost-everywhere positive Jacobian $J(z,h)=\det Dh (z) > 0$. For, if $h$ is $\mathscr C^2$-smooth the Hopf-Laplace equation is equivalent to $\,J(z,h)\, \Delta h =0\,$. The Eells-Lemaire question is seen to be false in general~\cite{Jost}. It may seem strange, but there exists a Lipschitz (actually piecewise orthogonal) mapping $\,\mathfrak h : \overline{\mathbb X} \into \mathbb R^2\,$ vanishing on $\,\partial \mathbb X\,$  whose Hopf product $\,\mathfrak h_z \overline{\mathfrak h_{\bar z}} = 0\,$,  almost everywhere (folding origami paper infinitely many times), see~\cite{IVV}.
 However, such  bizarre solutions do not occur  in the class of homeomorphisms; they turn out to be  harmonic mappings ~\cite{IKO8}.
Harmonic homeomorphisms are also known in the computer graphics literature~\cite{LPRM,RvBBWRF} under the name {\it least squares conformal mappings}. The message  is that without supplementary conditions of topological nature the general solutions to Hopf-Laplace equation are inadequate for  GFT and, certainly,  unacceptable in NE. The solutions that suit well for both purposes are \textit{\textbf{monotone Hopf-harmonics}}.

\begin{definition}  A continuous monotone mapping $\, h : \overline{\mathbb X} \onto \overline{\mathbb Y}\,$ of Sobolev class  $\, \mathscr W^{1,2}_{\loc}(\mathbb X, \mathbb C)\,$ which satisfies the equation (\ref{eq:hopf-laplace})  is called a \textit{monotone Hopf harmonic map}.
\end{definition}

In this class of mappings we gain, among other results,  an analogue of RKC-Theorem for non-convex targets. Let us first state one particular case, by assuming that the target domain $\,\mathbb Y\,$ is  $\mathscr C^2$-smooth.

\begin{theorem}\label{thm2}
   Given simply connected Jordan domains $\X$ and $\Y$, with   $\Y$ being $\mathscr C^2$-regular,  and an orientation-preserving\footnote{All given boundary homeomorphisms $g \colon \partial \X \onto \partial \Y$ are orientation-preserving without mentioning it explicitly.}  homeomorphism  $g \colon \partial \X \onto \partial \Y$  which admits a continuous  extension to $\,\mathbb X\,$ of Sobolev class $\,\mathscr W^{1,2}(\mathbb X, \mathbb R^2)\,$.   Then there exists a unique monotone Hopf-harmonic $h\colon \overline{\X} \onto \overline{\Y}$ of finite Dirichlet energy which agrees with $g$ on $\partial \X$.
\end{theorem}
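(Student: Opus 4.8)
The plan is to obtain $h$ by the Direct Method, minimizing the Dirichlet energy over the class $\mathscr H_g$ of continuous monotone mappings $\overline{\X}\onto\overline{\Y}$ of Sobolev class $\mathscr W^{1,2}(\X,\R^2)$ that agree with $g$ on $\partial\X$, and then to show that every energy minimizer solves the Hopf–Laplace equation $\eqref{eq:hopf-laplace}$ via inner variations, with uniqueness coming from a convexity/identity argument at the level of the Hopf differential. The competitor class $\mathscr H_g$ is nonempty by hypothesis (the prescribed extension of $g$ lies in it, being a uniform limit of homeomorphisms by Youngs' theorem), and $\inf_{\mathscr H_g}\mathscr E_\X\le\mathscr E_\X[\text{that extension}]<\infty$. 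Take a minimizing sequence $h_j$; since the traces $h_j|_{\partial\X}=g$ are fixed and the energies are bounded, Poincar\'e's inequality gives a uniform $\mathscr W^{1,2}$ bound, so after passing to a subsequence $h_j\rightharpoonup h$ weakly in $\mathscr W^{1,2}$ and, crucially, $h_j\to h$ uniformly on $\overline{\X}$ (this is where $\mathscr C^2$-regularity of $\Y$ enters: the images $\overline\Y$ are uniformly nice, so equicontinuity of the inverses and the boundary behavior can be controlled, e.g.\ via the modulus-of-continuity estimates for monotone Sobolev maps into Lipschitz targets). Uniform limits of monotone maps are monotone, $h$ still takes $\overline\X$ onto $\overline\Y$, it retains the boundary values $g$, and weak lower semicontinuity of $\int_\X|Dh|^2$ yields $\mathscr E_\X[h]\le\liminf\mathscr E_\X[h_j]=\inf_{\mathscr H_g}\mathscr E_\X$; hence $h$ is a minimizer in $\mathscr H_g$.

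Next I would show the minimizer satisfies $\eqref{eq:hopf-laplace}$. The outer variation is unavailable (it destroys monotonicity), so instead use inner variations: for a compactly supported vector field $\eta\in\mathscr C^\infty_0(\X,\R^2)$ set $h_t(x)=h(x+t\eta(x))$ for small $t$; these are again monotone with the same boundary values, hence admissible competitors, so $\frac{d}{dt}\big|_{t=0}\mathscr E_\X[h_t]=0$. A standard computation of this derivative gives the weak form $\int_\X \re\bigl(h_z\,\overline{h_{\bar z}}\,\overline{\D_{\bar z}\eta}\bigr)=0$ (equivalently $\int_\X h_z\overline{h_{\bar z}}\,\overline{\D_{\bar z}\eta}=0$ after also using the real/imaginary pairing), which is precisely the distributional statement $\D_{\bar z}(h_z\overline{h_{\bar z}})=0$; by Weyl's lemma the Hopf product $\varphi\bydef h_z\overline{h_{\bar z}}\in\mathscr L^1_{\loc}(\X)$ is holomorphic. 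So $h$ is a monotone Hopf-harmonic of finite energy agreeing with $g$ on $\partial\X$, proving existence.

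For uniqueness, suppose $h$ and $\tilde h$ are two such maps, with holomorphic Hopf differentials $\varphi$ and $\tilde\varphi$. The key point is that a monotone Hopf-harmonic of finite energy with given boundary data is in fact the energy minimizer in $\mathscr H_g$, and conversely its Hopf differential is determined by the boundary data; one shows this by the standard comparison: writing $h=\Phi+\overline{\Psi}$ locally with $\Phi,\Psi$ related to primitives and using that for fixed boundary values the energy of a Hopf-harmonic is expressed through $\iint|\varphi|$-type quantities, any competitor has at least that energy with equality forcing the same $\varphi$. Thus $\varphi=\tilde\varphi$; then $h$ and $\tilde h$ satisfy the same first-order relation $h_z\overline{h_{\bar z}}=\varphi$, and together with the common boundary data and the structure theory of finite-energy solutions of $\eqref{eq:hopf-laplace}$ (the Hopf factorization/the fact that two monotone solutions with the same Hopf differential and boundary values coincide, as in the rigidity results cited in the paper) we conclude $h=\tilde h$.

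The main obstacle is the compactness step, specifically upgrading weak $\mathscr W^{1,2}$ convergence of the minimizing sequence to uniform convergence on $\overline\X$ while keeping the limit monotone and onto $\overline\Y$ with the correct trace — this is exactly the place where the $\mathscr C^2$-regularity (or at least Lipschitz-plus regularity) of the target is indispensable and where one must rule out the degenerate "origami" phenomena mentioned in the introduction. A secondary delicate point is justifying that the inner-variation competitors $h_t$ remain in $\mathscr H_g$ for all small $t$ (monotonicity is clearly preserved by precomposition with a diffeomorphism close to the identity, but one must check the Sobolev class and boundary values carefully) and that the differentiated energy identity holds rigorously for merely $\mathscr W^{1,2}$ maps. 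The uniqueness argument also requires care: it rests on the convexity of $\mathscr E_\X$ together with the precise relationship between minimizers and Hopf-harmonics, which must be established before the rigidity conclusion can be invoked.
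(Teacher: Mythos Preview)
Your existence outline is essentially the paper's: the Direct Method in the class of monotone Sobolev maps with boundary values $g$, equicontinuity from the modulus-of-continuity estimate for monotone $\mathscr W^{1,2}$ maps into Lipschitz targets, and inner variations to produce the Hopf--Laplace equation. One correction: the hypothesis only gives a \emph{continuous} finite-energy extension of $g$, not a monotone one, so nonemptiness of $\mathscr H_g$ is not immediate; the paper invokes the result of \cite{KO} that such a boundary map admits a \emph{homeomorphic} $\mathscr W^{1,2}$ extension. Also, you locate the role of $\mathscr C^2$-regularity in the compactness step, but only Lipschitz regularity of $\mathbb Y$ is used there.

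The genuine gap is in uniqueness. Your sketch (``standard comparison'', ``$\iint|\varphi|$-type quantities'', ``rigidity results cited in the paper'') is not a proof, and it misses the mechanism by which $\mathscr C^2$-regularity actually enters. In the paper, uniqueness proceeds in two nontrivial stages. First, one must show that \emph{every} monotone Hopf-harmonic with the given boundary data is an energy minimizer (Proposition~\ref{pro:412}); this is not a convexity argument but relies on the identity of Lemma~\ref{lemmaidentity}, partial harmonicity, and a careful integration along vertical trajectories of the Hopf differential (Lemmas~\ref{lem:strebel}, \ref{Fublem}, \ref{lemtraj}). Second, given two minimizers $h$ and $H$, the diffeomorphism $f=H^{-1}\circ h$ on $h^{-1}(\mathbb Y)$ is shown to be conformal, and then one needs $f=\id$ on an arc of $\partial\mathbb X$ to conclude $f\equiv\id$. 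That boundary arc is produced by Lemma~\ref{lem:touch}, which requires $\mathbb Y$ to be \emph{somewhere convex}---and this is exactly what $\mathscr C^2$-regularity guarantees (a $\mathscr C^2$ boundary has a point of maximal curvature, near which $\overline{\mathbb Y}$ is locally convex). Without this geometric input there is no reason two energy minimizers must coincide; your argument that ``$\varphi=\tilde\varphi$'' forces $h=\tilde h$ is unsupported, since knowing the Hopf product alone does not determine the map.
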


A fundamental question arises:

\begin{question}\label{qu:main}
Let $\,\mathbb X\,,\,\mathbb Y \subset \mathbb C\,$ be bounded  simply connected domains  and $\,g \colon \partial \X \onto \partial \Y\,$  a monotone map. Does there exist a unique monotone Hopf-harmonic $\,h \colon \overline{\X} \to \mathbb C\,$ which coincides with $g$ on $\partial \X\,$? If that is the case, the equality  $\,h(\overline{\mathbb X}) = \overline{\mathbb Y}\,$ automatically holds.
\end{question}
 In such a generality this question seems to be over-committed. Nevertheless, the class of Lipschitz target domains (a standard assumption in NE) is wide enough to gain in interest.

\begin{theorem} [Existence] \label{thm:main}
Suppose that  $\X$ and $\Y$ are  simply connected Jordan domains, $\Y$ being  Lipschitz regular.  Let $g \colon \overline{\X} \onto \overline{\Y}$ be a homeomorphism of Sobolev class $\W^{1,2} (\X , \Y)$. Then there exists a  monotone Hopf-harmonic $h\colon \overline{\X} \onto \overline{\Y}$ of class $\W^{1,2} (\X , \Y)$ which agrees with $g$ on $\partial \X$.  Furthermore, $h$ is locally Lipschitz on $\X$ and a harmonic diffeomorphism  from  $h^{-1} (\Y)$ onto $\Y$.
\end{theorem}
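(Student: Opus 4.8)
The plan is to obtain $h$ by the Direct Method, as a Dirichlet energy–minimal monotone map within the closure of the relevant class of homeomorphisms, then to read off the Hopf--Laplace equation from \emph{inner} (not outer) stationarity, and finally to deduce the regularity and diffeomorphism statements. Concretely, let $\mathcal F_g$ be the family of all mappings $\overline\X\onto\overline\Y$ that arise as uniform limits of $\W^{1,2}$-homeomorphisms $h_j\colon\overline\X\onto\overline\Y$ with $h_j=g$ on $\partial\X$. By Youngs' theorem these are exactly the monotone uniform limits of such homeomorphisms, and $g\in\mathcal F_g$, so $\inf_{\mathcal F_g}\mathscr E_\X\le\mathscr E_\X[g]<\infty$. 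I would take a minimizing sequence $h_j$; its energies are bounded and its ranges all sit inside the fixed bounded set $\overline\Y$, so the classical Courant--Lebesgue oscillation estimate together with the fact that a monotone map sends a Jordan disk into the region enclosed by the image of its bounding circle gives equicontinuity of $\{h_j\}$. Passing to a subsequence, $h_j\to h$ uniformly and $Dh_j\rightharpoonup Dh$ in $L^2$. Then $h=g$ on $\partial\X$; a degree argument applied to the homeomorphisms $h_j$ gives $h(\overline\X)=\overline\Y$; since $\Y$ is Lipschitz the uniform limit $h$ is monotone; and weak lower semicontinuity of the Dirichlet integral yields $\mathscr E_\X[h]=\inf_{\mathcal F_g}\mathscr E_\X$. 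Thus $h\in\mathcal F_g\cap\W^{1,2}(\X,\Y)$ is a minimizer with the prescribed boundary data.

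Next I would extract the equation. Outer variations are forbidden (they destroy monotonicity), but $\mathcal F_g$ is stable under precomposition with diffeomorphisms of $\X$ equal to the identity near $\partial\X$ (simply precompose each $h_j$). Hence, for $\phi_\varepsilon=\mathrm{id}+\varepsilon\,\eta$ with $\eta\in\mathscr C_0^\infty(\X,\C)$, the function $\varepsilon\mapsto\mathscr E_\X[h\circ\phi_\varepsilon^{-1}]$ attains a minimum at $\varepsilon=0$. The vanishing of its first derivative is, as recalled in the Introduction, equivalent to $\partial_{\bar z}\!\bigl(h_z\overline{h_{\bar z}}\bigr)=0$ in $\mathscr D'(\X)$; since $h_z\overline{h_{\bar z}}\in L^1_{\loc}(\X)$, Weyl's lemma makes $\varphi:=h_z\overline{h_{\bar z}}$ holomorphic on $\X$. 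Therefore $h$ is a monotone Hopf-harmonic of class $\W^{1,2}(\X,\Y)$ agreeing with $g$ on $\partial\X$, which already gives the first assertion.

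For the diffeomorphism statement, set $V:=h^{-1}(\Y)$, an open subset of $\X$. I would first argue that no fiber $h^{-1}(y)$ with $y\in\Y$ is a nondegenerate continuum: such a fiber is incompatible with the structure of the holomorphic quadratic differential $\varphi\,\dtext z^2$ together with monotonicity, forcing $\varphi\equiv0$; and a monotone solution with $\varphi\equiv0$ has $h_{\bar z}=0$ a.e.\ (using $\det Dh\ge0$), hence is holomorphic, hence the Riemann map onto $\Y$, for which the theorem is immediate. So we may assume $\varphi\not\equiv0$, its zeros are isolated, and away from them the natural-parameter normalization $h_\zeta\overline{h_{\bar\zeta}}\equiv1$ (which forces $\Delta h=0$ wherever $\det Dh>0$) exhibits $h|_V$ as a local homeomorphism; invariance of domain then upgrades $h|_V\colon V\onto\Y$ to a homeomorphism. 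Thus $h|_V$ is a Hopf-harmonic $\W^{1,2}_{\loc}$-homeomorphism, so by the rigidity of homeomorphic solutions of the Hopf--Laplace equation~\cite{IKO8} it is harmonic, and an orientation-preserving harmonic homeomorphism is a $\mathscr C^\infty$-diffeomorphism. This proves the last assertion and gives local Lipschitz regularity of $h$ on $V$.

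It remains to bound $Dh$ on compact subsets of $\X$ near the contact set $h^{-1}(\partial\Y)$, and \emph{this is the main obstacle}. Holomorphy of $\varphi$ makes it locally bounded; from $|h_z|\,|h_{\bar z}|=|\varphi|$ and $\det Dh=|h_z|^2-|h_{\bar z}|^2\ge0$ a.e.\ one gets $|h_{\bar z}|\le|\varphi|^{1/2}$, so $h_{\bar z}$ is already locally bounded, and on the interior of $h^{-1}(\partial\Y)$ one has $\det Dh=0$ and hence $|Dh|^2=4|\varphi|$. The delicate point is a gradient estimate across the free interface $\partial V\cap\X$, where $h$ behaves like a harmonic map whose image touches the \emph{merely Lipschitz} boundary $\partial\Y$: one must promote the bound on $h_{\bar z}$ to a bound on $h_z$, which I expect to achieve by a comparison/barrier argument that uses the uniform interior cone condition of the Lipschitz domain $\Y$ together with the holomorphy of $\varphi$ (controlling how fast $h$ can return to $\partial\Y$). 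Once $h_z$ is locally bounded, the identity $|Dh|^2=2\sqrt{(\det Dh)^2+4|\varphi|^2}$ shows $h\in\W^{1,\infty}_{\loc}(\X,\C)$. Carrying out this last estimate under Lipschitz — rather than $\mathscr C^2$ — regularity of the target is where the real work lies.
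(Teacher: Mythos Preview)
Your existence step and derivation of the Hopf--Laplace equation via inner variation are essentially the paper's argument, and they are fine.

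The substantive divergence, and the genuine gap, is in the partial harmonicity (the claim that $h\colon h^{-1}(\Y)\onto\Y$ is a harmonic diffeomorphism). Your key assertion, that a nondegenerate fiber $h^{-1}(y)$ with $y\in\Y$ would force $\varphi\equiv 0$, is not justified. What the trajectory structure actually gives (Proposition~\ref{pro:212}) is only that such a fiber must lie in a single noncritical vertical trajectory of $\varphi\,\dtext z\otimes\dtext z$; nothing in that statement distinguishes $y\in\Y$ from $y\in\partial\Y$, and indeed Examples~\ref{ex:butterfly} and~\ref{HammeringRectangle} display monotone Hopf-harmonics with $\varphi\not\equiv 0$ and nondegenerate fibers (over boundary points). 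So one cannot read off $\varphi\equiv 0$ from a nondegenerate fiber without further input. Your follow-up, that the natural-parameter normalization $h_\zeta\overline{h_{\bar\zeta}}\equiv 1$ ``forces $\Delta h=0$ wherever $\det Dh>0$'' and ``exhibits $h|_V$ as a local homeomorphism'', is also not a proof: the implication $\Delta h=0$ on $\{\det Dh>0\}$ requires $\mathscr C^2$-regularity you do not have, and there are Lipschitz solutions of $h_\zeta\overline{h_{\bar\zeta}}\equiv\mathrm{const}$ with $J_h\ge 0$ that are not local homeomorphisms (precisely the squeezing examples above, read in natural parameters). The paper proves partial harmonicity by an entirely different route: for each convex disk $\mathbb D\Subset\Y$ it compares $h$ on $\mathbb U=h^{-1}(\mathbb D)$ with the Rad\'o--Kneser--Choquet harmonic diffeomorphism $H\colon\mathbb U\onto\mathbb D$ sharing the boundary values, and shows the reverse energy inequality $\mathscr E_{\mathbb U}[h]\le\mathscr E_{\mathbb U}[H]$ via the algebraic identity of Lemma~\ref{lemmaidentity}, Strebel's minimizing property of vertical arcs (Lemma~\ref{lem:strebel}), and the Fubini-type integration along trajectories (Lemma~\ref{Fublem}). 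Equality of energies then forces $h=H$ on $\mathbb U$; this is the missing mechanism in your sketch.

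On the Lipschitz regularity, you have the logic inverted and are therefore making it harder than it is. The paper does \emph{not} treat this as a free-boundary estimate across $\partial(h^{-1}(\Y))\cap\X$; it invokes~\cite{IKOlip}, whose input is only that $h\in\mathscr W^{1,2}_{\loc}$ solves the Hopf--Laplace equation with $J_h\ge 0$ a.e. (the latter coming from diffeomorphic approximation, \cite{IOmono}). That result delivers $h\in\mathscr W^{1,\infty}_{\loc}(\X)$ outright, with no reference to the decomposition $\X=h^{-1}(\Y)\cup h^{-1}(\partial\Y)$ or to the Lipschitz geometry of $\partial\Y$. Your proposed barrier/cone argument is aimed at the wrong target.
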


This latter statement will be referred to as  \textit{partial harmonicity}. In particular, the set $\,\mathbb X \setminus h^{-1} (\Y) \,$ is squeezed into $\,\partial \mathbb Y\,$. The interpretation of partial harmonicity is that no continuum in $\,\mathbb X\,$ can be squeezed into a point in $\mathbb Y$. In other words, the interpenetration of matter may occur only in the regions adjacent to  $\,\partial\mathbb X\,$. 

\begin{remark} Speaking of the boundary homeomorphism $\,g \colon \overline{\X} \onto \overline{\Y}\,$ in Theorem \ref{thm:main}, it is certainly necessary to assume that $\,g\,$ admits a continuous finite energy extension to $\overline{\mathbb X}$;  harmonic extension is the one of smallest energy. However, if this assumption is made, there exists even a {homeomorphic} extension  $\,g : \overline{\mathbb X} \onto \overline{\mathbb Y}\,$ of Sobolev class $\,\mathscr W^{1,2}(\mathbb X, \mathbb Y)\,$ (of course, not necessarily harmonic).  This was shown in the recent work~\cite{KO}, in  which  the Lipschitz regularity of $\,\mathbb Y\,$ is essential. Curiously, the existence of finite energy harmonic extension depends only on the boundary map. Indeed, with the aid of a conformal transformation of $\,\mathbb X\,$ onto the unit disk $\,\mathbb D\,$, our boundary assumption reduces to the familiar Douglas condition \cite{Do}, formulated purely in terms of the map $g \colon \partial \mathbb D \onto \partial \mathbb Y$,
\begin{equation}\label{w12extension}
\int_{\partial \mathbb D} \int_{\partial \mathbb D}   \left| \frac{g(\xi) -g(\eta)}{\xi - \eta}\right|^2 \, \abs{\dtext \xi }\,  \abs{\dtext \eta } < \infty \, .
\end{equation}
\end{remark}

Our proof of Theorem~\ref{thm:main} expands on  the careful analysis of the structure of horizontal and vertical trajectories of the holomorphic quadratic Hopf differential $\, h_z \overline{h_{\bar{z}}} \,\, \textnormal d z\otimes \textnormal d z\,$, already initiated in \cite{IwaniecOnninenHopf, IOmono, IOsimply}.\\

\textit{Now comes the question of uniqueness}.  If $\,\mathbb Y\,$ is convex,  the unique harmonic extension of $\, g : \partial \mathbb X \onto \partial \mathbb Y\,$ is a homeomorphism of $\,\overline{\mathbb X}\,$ onto $\,\overline{\mathbb Y}\,$, by RKC theorem. Using an energy argument we shall see (Theorem \ref{Thm:main2} below) that this is the only monotone Hopf harmonic extension. 
The goal is to relax, as much as possible, the constraint of $\,\mathbb Y\,$  being convex. The following definition returns as its answer.
\begin{definition}[Somewhere Convexity] \label{PartialConvexity}
  A simply connected Jordan domain $\Y\subset \mathbb C\,$ is said to be {\it somewhere convex} if there is a disk $\, \mathbb D(y_\circ , \varepsilon)\,$ centered at a point $\, y_\circ \in \partial \mathbb Y\,$ and with radius $\,\varepsilon > 0\,$ whose intersection with $\,\overline{\mathbb Y}\,$ is convex.
\end{definition}
\begin{figure}[h!]
    \centering
    \includegraphics[width=0.99\textwidth]{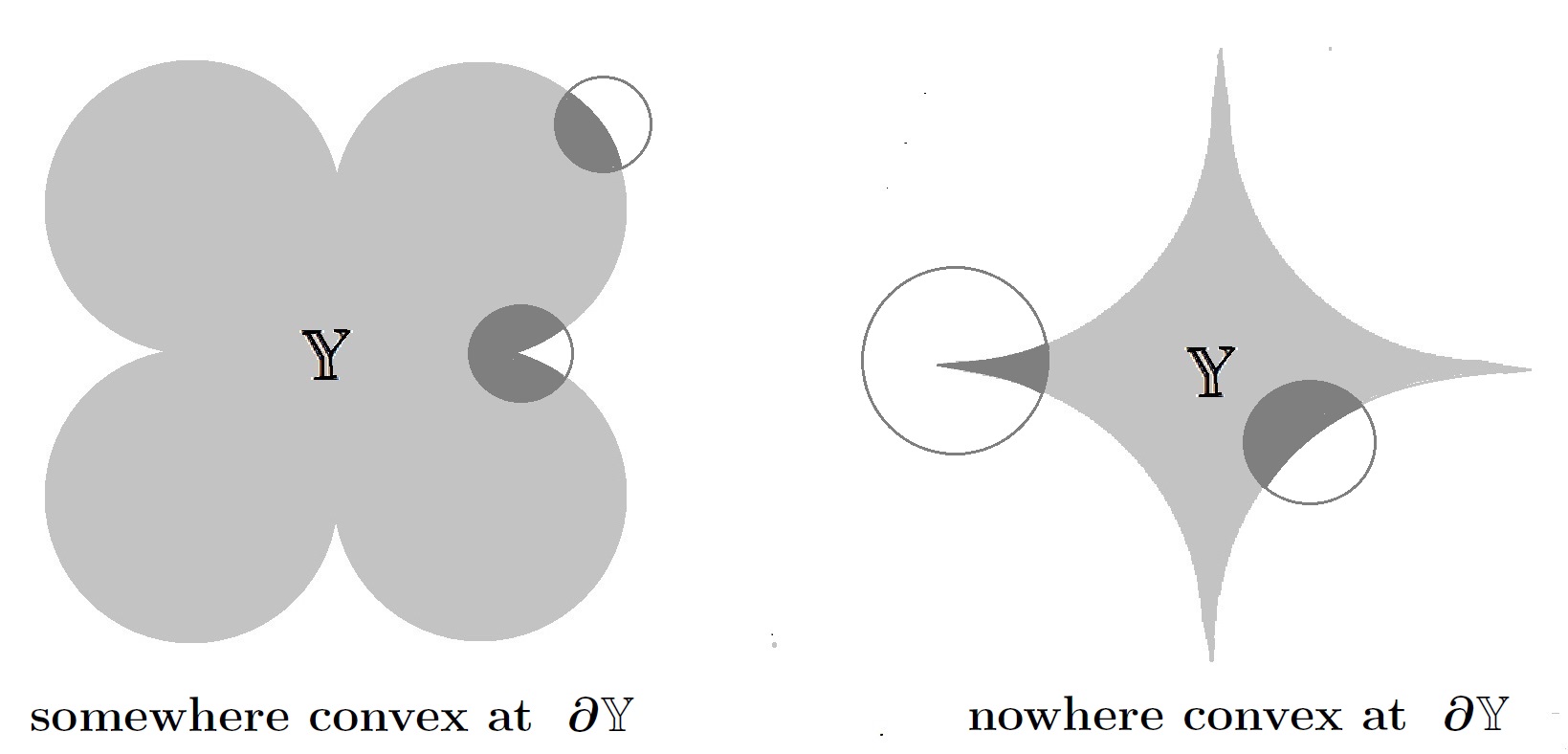}
     \caption{An illustration to Definition \ref{PartialConvexity}.}
\end{figure}
\begin{theorem} [Uniqueness] \label{Thm:main2} Under the assumptions in Theorem ~\ref{thm:main}, if in addition $\,\mathbb Y\,$ is somewhere convex, then the Hopf-harmonic map $\, h\colon \overline{\X} \onto \overline{\Y}\,$ is unique.
\end{theorem}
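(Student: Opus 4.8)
Suppose $h_1,h_2\colon\overline{\X}\onto\overline{\Y}$ are monotone Hopf-harmonics of class $\W^{1,2}(\X,\Y)$ agreeing with $g$ on $\partial\X$; we must show $h_1\equiv h_2$. By the conformal invariance of \eqref{eq:hopf-laplace} I would normalize $\X=\mathbb D$. By Theorem~\ref{thm:main} each $h_i$ is locally Lipschitz, a harmonic diffeomorphism of $\X_i:=h_i^{-1}(\Y)$ onto $\Y$, and the residual set $Z_i:=\overline{\mathbb D}\setminus\X_i$ is carried into $\partial\Y$ and clings only to $\partial\mathbb D$; in particular $\X_i$ is simply connected and the Hopf products $\varphi_i:=(h_i)_z\,\overline{(h_i)_{\bar z}}$ are holomorphic on $\mathbb D$. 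I would also record that each $h_i$ minimizes the Dirichlet energy among monotone Sobolev maps of $\overline{\mathbb D}$ onto $\overline{\Y}$ with boundary values $g$, so $\mathscr E_\X[h_1]=\mathscr E_\X[h_2]$; comparing areas through the diffeomorphisms $h_i|_{\X_i}$ forces $\int_{Z_i}J(\cdot,h_i)=0$, whence $J(\cdot,h_i)=0$ a.e.\ on $Z_i$ and $J(\cdot,h_i)>0$ a.e.\ on $\X_i$.

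Somewhere convexity serves as a local rigidity anchor. Put $D_\circ=\mathbb D(y_\circ,\varepsilon)$ with $C:=\overline{\Y}\cap D_\circ$ convex, let $L$ be a supporting line of $C$ at $y_\circ$, and choose coordinates so that $y_\circ=0$ and $C\subset\{\re w\ge 0\}$. The first claim is that $Z_i$ avoids a one-sided neighbourhood $N_i\subset\overline{\mathbb D}$ of a sub-arc $\gamma$ of $g^{-1}(\partial\Y\cap D_\circ)$, so that there $h_i$ is a harmonic diffeomorphism, $\mathscr C^2$ up to $\gamma$, onto a one-sided neighbourhood of the convex arc $\partial\Y\cap D_\circ'$ for some $D_\circ'\Subset D_\circ$. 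This is the local counterpart of the Rad\'o--Kneser--Choquet argument: for every closed half-plane $\{\re(\bar a\,w)\ge c\}\supset C$ the function $\re(\bar a\,h_i)-c$ is non-negative on $h_i^{-1}(C)$ and harmonic on $\X_i\cap h_i^{-1}(C)$, and since $J(\cdot,h_i)=0$ a.e.\ on $Z_i$ the Hopf-Laplace equation pins the distributional Laplacian $\Delta(\re(\bar a\,h_i))$ to be a non-positive measure carried by $Z_i$; hence $\re(\bar a\,h_i)-c$ is superharmonic on $h_i^{-1}(C)$, and were a continuum of $\mathbb D$ sent by $h_i$ into $\partial\Y\cap D_\circ$, this superharmonic function would attain an interior minimum and be constant, forcing $h_i$ to collapse an open set onto the segment $L\cap\partial\Y$ — impossible for a monotone $\W^{1,2}$ map onto a planar region.

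Next I would upgrade boundary agreement on $\gamma$ to agreement of Cauchy data. Along $\gamma$ both $h_1,h_2$ carry the same tangential derivative $g'$; combining this with $(h_i)_z\,\overline{(h_i)_{\bar z}}=\varphi_i$, with $J(\cdot,h_i)>0$, and with the energy-minimality of a harmonic diffeomorphism mapping $\gamma$ into the convex $\mathscr C^2$ arc $\partial\Y\cap D_\circ$, one pins down, first, the holomorphic boundary value of $\varphi_i$ along $\gamma$ as a function of $g$ and of the local geometry of $\partial\Y$ alone — so $\varphi_1=\varphi_2$ on $\gamma$, hence $\varphi_1\equiv\varphi_2=:\varphi$ on $\mathbb D$ by the identity theorem — and, second, the normal derivative $\partial_\nu h_i$ on $\gamma$, so that $\partial_\nu h_1=\partial_\nu h_2$ there. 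Then $w:=h_1-h_2$ is componentwise harmonic on the component $\Omega_\circ$ of $\X_1\cap\X_2$ abutting $\gamma$ and has vanishing Cauchy data on the analytic arc $\gamma$, so $w\equiv 0$ on $\Omega_\circ$ by unique continuation. Finally I would propagate $h_1=h_2$ from $\Omega_\circ$ to all of $\overline{\mathbb D}$: both maps are harmonic off their zero-Jacobian, boundary-clinging residual sets, share the holomorphic Hopf differential $\varphi$ and the boundary data $g$, and coincide on the open set $\Omega_\circ$; writing the harmonic pieces as $F_i+\overline{G_i}$ with $F_i'\,G_i'=\varphi$ one continues the equality across $\mathbb D$, the set $Z_1\cup Z_2$ being removable for these continuous finite-energy pieces. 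Undoing the normalization gives $h_1\equiv h_2$ on $\overline{\X}$.

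The main obstacle is the local step: to run the Rad\'o--Kneser--Choquet/maximum-principle argument for a solution known only to be monotone and of class $\W^{1,2}_{\loc}$, one must extract from the Hopf-Laplace equation the correct sign of $\Delta h_i$ on the residual set adjacent to the convex arc, and then show that minimality forces the germ of $h_i$ along that arc — equivalently the boundary value of the holomorphic $\varphi_i$ there — to depend on $g$ only. The subsequent global propagation is comparatively soft, resting on the rigidity of partially harmonic monotone maps with prescribed holomorphic Hopf differential and boundary values, which in turn uses the trajectory structure of $\varphi\,\dtext z\otimes\dtext z$ from \cite{IwaniecOnninenHopf,IOmono,IOsimply} together with the finite-energy minimality.
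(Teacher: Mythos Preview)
Your approach diverges from the paper's and has a genuine gap at the step you yourself flag as the main obstacle: the claim that the boundary value of $\varphi_i$ along $\gamma$ --- equivalently the normal derivative $\partial_\nu h_i$ --- is determined by $g$ and the local geometry of $\partial\Y$ alone. Knowing $h_i=g$ on $\gamma$ fixes only the tangential derivative; the normal derivative is precisely the unknown in a Dirichlet problem, and energy minimality is a \emph{global} constraint over $\mathscr M_g(\overline{\X},\overline{\Y})$ that does not localize to pin down $\partial_\nu h_i$ on a sub-arc. There is no ``Dirichlet-to-Neumann on a sub-arc'' principle available here, and nothing in the Hopf--Laplace equation supplies one. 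Without this, you cannot conclude $\varphi_1=\varphi_2$ on $\gamma$, and the unique-continuation argument collapses. Your preliminary local step is also not solid: the assertion that $\Delta\bigl(\re(\bar a\,h_i)\bigr)$ is a signed measure carried by $Z_i$ does not follow from the Hopf--Laplace equation and $J\ge 0$; the paper establishes the analogous fact (Lemma~\ref{lem:touch}) by an energy comparison with the Rad\'o--Kneser--Choquet replacement on the convex cell, not by a maximum principle.

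The paper's route avoids the Cauchy-data problem entirely. Given two monotone Hopf-harmonics $h,H$ (both energy minimizers by Proposition~\ref{pro:412}), it forms $f=H^{-1}\circ h\colon h^{-1}(\Y)\to H^{-1}(\Y)$ and approximates $H$ by diffeomorphisms $H_k$. The energy identity of Lemma~\ref{lemmaidentity}, combined with the trajectory estimate~\eqref{eq:5121}, yields
\[
\mathscr E_\X[H_k]-\mathscr E_\X[h]\;\ge\;4c^2\int_{\mathbb D}|g^k_{\bar w}|^2,
\]
where $g^k=(f^k)^{-1}$; letting $k\to\infty$ forces $g_{\bar w}=0$, so $f$ is \emph{conformal}. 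Only now does somewhere convexity enter, via Lemma~\ref{lem:touch}, to produce an arc $\Gamma\subset\partial\X$ on which $f=\textnormal{id}$, whence $f\equiv\textnormal{id}$ by the identity theorem. This gives $h=H$ on $h^{-1}(\Y)$ and $\varphi\equiv\psi$ on $\X$; equality on the residual set then follows from Lemma~\ref{lemtraj} via the common vertical-trajectory structure. The decisive idea you are missing is to compare $h$ and $H$ through the intertwining diffeomorphism $f$ and to use the quadratic-differential machinery (Lemmas~\ref{lemmaidentity}, \ref{lem:strebel}, \ref{Fublem}) to force $f$ to be conformal, rather than trying to match Cauchy data directly.
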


\textit{In summary}. Monotone Hopf harmonics  open a new area of study in GFT with applications to  the boundary value problems for hyper-elastic deformations of   plates (planar domains) and thin films (surfaces in $\mathbb R^3$). This is the way to explain in mathematical rigor the principle of non-interpenetration of matter in NE.  Topology of Monotone Sobolev mappings becomes  a new resource in nonlinear  PDEs.

\section{Prerequisites}
In this section we review from~\cite{Stb} useful concepts and results about Hopf differentials $h_z \overline{h_{\bar z}} \,\, \dtext z \otimes \dtext z$ and their trajectories. We, however, start with a powerful identity.

\subsection{An identity}
\begin{lemma}\label{lemmaidentity}
 Let $\X$, $\Y$  and $\mathbb G$ be bounded  domains in $\C$. Suppose that $h \colon \mathbb G \onto\mathbb Y$ and $H \colon \mathbb \X \onto \Y$ are orientation preserving $\mathscr C^\infty$-diffeomorphisms  of finite Dirichlet  energy.
Define
$\,f = H^{-1} \circ h \,\colon \mathbb G \onto \mathbb X$. Then we have
\begin{equation}\label{identity}
\begin{split}
\int_\X \abs{DH}^2  - \int_\mathbb G \abs{Dh}^2 & = 4 \int_{\mathbb G} \left[\frac{\abs{f_z-\sigma (z) f_{\bar z}}^2}{\abs{f_z}^2-\abs{f_{\bar z}}^2} -1 \right]
\, \abs{h_zh_{\bar z}}\, \dtext z\\
& + 4 \int_{\mathbb G} \frac{(\,\abs{h_z}\,-\,\abs{h_{\bar z}}\,)^2\cdot \abs{f_{\bar z}}^2
}{\abs{f_z}^2-\abs{f_{\bar z}}^2}\,\dtext z
\end{split}
\end{equation}

where
\[
\sigma = \sigma(z) = \begin{cases}
{h_z\overline{h_{\bar z}}}{\, \abs{h_z  \overline{h_{\bar z}}}^{-1}} \qquad &\textnormal{\;if } h_z\overline{h_{\bar z}}\ne 0 \\
0 & \textnormal{\;otherwise.}
 \end{cases}
\]

 The integrals in  (\ref{identity}) converge.
\end{lemma}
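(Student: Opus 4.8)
The plan is to derive the identity by a direct change-of-variables computation, exploiting the fact that $f = H^{-1}\circ h$ is a diffeomorphism $\mathbb G \onto \mathbb X$ and that the Dirichlet energy of $H$ can be pulled back to $\mathbb G$ through $f$. First I would record the chain rule in complex notation: since $h = H\circ f$, we have $h_z = (H_w\circ f)\,f_z + (H_{\bar w}\circ f)\,\overline{f_{\bar z}}$ and $h_{\bar z} = (H_w\circ f)\,f_{\bar z} + (H_{\bar w}\circ f)\,\overline{f_z}$, where $w = f(z)$. Using these I would express the Hopf product $h_z\overline{h_{\bar z}}$ and the quantities $|h_z|^2 \pm |h_{\bar z}|^2$ in terms of $H_w\circ f$, $H_{\bar w}\circ f$ and the derivatives of $f$. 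In parallel, I would change variables $w = f(z)$ in $\int_{\mathbb X}|DH|^2 = \int_{\mathbb X}2(|H_w|^2+|H_{\bar w}|^2)\,\dtext w$, which introduces the Jacobian $J(z,f) = |f_z|^2 - |f_{\bar z}|^2 > 0$, so that
\[
\int_{\mathbb X}|DH|^2 = \int_{\mathbb G} 2\big(|H_w\circ f|^2 + |H_{\bar w}\circ f|^2\big)\big(|f_z|^2 - |f_{\bar z}|^2\big)\,\dtext z .
\]
The goal is then purely algebraic: show that the integrand of this last expression, minus $|Dh|^2 = 2(|h_z|^2+|h_{\bar z}|^2)$, equals the sum of the two integrands on the right side of \eqref{identity}.

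The key algebraic step is to solve the chain-rule relations for $H_w\circ f$ and $H_{\bar w}\circ f$ in terms of $h_z, h_{\bar z}$ and $f_z, f_{\bar z}$ — the system is linear with determinant $|f_z|^2 - |f_{\bar z}|^2$, giving $H_w\circ f = (h_z \overline{f_z} - \overline{h_{\bar z}}\,f_{\bar z})/(|f_z|^2-|f_{\bar z}|^2)$ and similarly for $H_{\bar w}\circ f$ — and then substitute into $2(|H_w\circ f|^2+|H_{\bar w}\circ f|^2)(|f_z|^2-|f_{\bar z}|^2)$. After expanding, I expect the cross terms to organize themselves, with the help of the definition of $\sigma$ (which has modulus one wherever $h_z\overline{h_{\bar z}}\neq 0$ and absorbs the phase of the Hopf product), into exactly the two nonnegative-looking combinations
\[
\frac{|f_z - \sigma f_{\bar z}|^2}{|f_z|^2 - |f_{\bar z}|^2}\,|h_z h_{\bar z}| \quad\text{and}\quad \frac{(|h_z| - |h_{\bar z}|)^2\,|f_{\bar z}|^2}{|f_z|^2 - |f_{\bar z}|^2},
\]
after subtracting off $|h_z|^2 + |h_{\bar z}|^2$ and the $-1$ correction in the first bracket. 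The role of $\sigma$ is precisely to make $|h_z h_{\bar z}|\,\sigma$-phased combinations match $h_z\overline{h_{\bar z}}$-valued terms coming out of the expansion; one checks the identity pointwise both where $h_z\overline{h_{\bar z}}\neq 0$ (the generic case) and on the zero set, where the first bracket term vanishes and the formula still balances.

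For the convergence claim I would argue as follows. The left-hand side is finite by hypothesis, since both $H$ and $h$ have finite Dirichlet energy. For the right-hand side, the second integrand is manifestly nonnegative (as $|f_z|^2 - |f_{\bar z}|^2 > 0$ because $f$ is an orientation-preserving diffeomorphism), and so is $|h_z h_{\bar z}|$ times the nonnegative factor $|f_z-\sigma f_{\bar z}|^2/(|f_z|^2-|f_{\bar z}|^2)$; the only term of indefinite sign is $-|h_z h_{\bar z}|$ coming from the $-1$, but $\int_{\mathbb G}|h_z h_{\bar z}| \le \tfrac14\int_{\mathbb G}|Dh|^2 < \infty$. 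Hence both integrals on the right converge absolutely and their sum equals the finite left-hand side.

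The main obstacle I anticipate is the bookkeeping in the algebraic identity: the expansion of $|H_w\circ f|^2 + |H_{\bar w}\circ f|^2$ produces several terms mixing $h_z, h_{\bar z}, \overline{h_z}, \overline{h_{\bar z}}$ with $f_z, f_{\bar z}$ and their conjugates, and rearranging them into the stated form — particularly extracting the $|h_zh_{\bar z}|$ and $(|h_z|-|h_{\bar z}|)^2$ structures and verifying the coefficients and the $-1$ shift — requires care. A clean way to manage this is to first reduce, via a pointwise rotation in the $z$-plane, to the case where $h_z\overline{h_{\bar z}}$ is real and nonnegative (so $\sigma = 1$), carry out the simplified computation there, and then note that both sides of \eqref{identity} are invariant under such rotations (the Hopf differential being a quadratic differential), which recovers the general case.
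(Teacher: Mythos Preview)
The paper does not prove this lemma; it simply refers the reader to \cite[Lemma~8.1]{IwaniecOnninenHopf}. Your plan---pull back $\int_{\mathbb X}|DH|^2$ to $\mathbb G$ via the change of variables $w=f(z)$, invert the chain-rule system $h_z=(H_w\circ f)f_z+(H_{\bar w}\circ f)\overline{f_{\bar z}}$, $h_{\bar z}=(H_w\circ f)f_{\bar z}+(H_{\bar w}\circ f)\overline{f_z}$ to express $H_w\circ f$, $H_{\bar w}\circ f$ in terms of $h_z,h_{\bar z},f_z,f_{\bar z}$, and then verify the pointwise algebraic identity---is exactly the direct computation one expects and is correct in outline. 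One small slip: solving the linear system gives $H_w\circ f=\big(h_z\,\overline{f_z}-h_{\bar z}\,\overline{f_{\bar z}}\big)/(|f_z|^2-|f_{\bar z}|^2)$, not $(h_z\overline{f_z}-\overline{h_{\bar z}}\,f_{\bar z})/(|f_z|^2-|f_{\bar z}|^2)$ as you wrote; this does not affect the strategy. Your reduction to the case $\sigma=1$ by a pointwise rotation (equivalently, by the invariance of both sides under $z\mapsto e^{i\theta}z$) is a clean way to organize the algebra, and your convergence argument is sound.
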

For the proof of Lemma~\ref{lemmaidentity} can be found in~\cite[Lemma 8.1]{IwaniecOnninenHopf}.

The following simply connected (not necessarily Jordan)  version of the Rad\'o-Kneser-Choquet
theorem will play a central role in our forthcoming arguments.
\begin{lemma}\label{lem:RKCsimply}
Consider a bounded simply connected domain $\mathbb U \subset \C$ and a bounded convex domain $\mathcal Q \subset \mathbb C$. Let $h \colon \partial \mathbb U \onto \partial \mathcal Q$ be a monotone mapping and $H \colon \mathbb U \to \C$ denote its harmonic extension. Then $H$ is a $\mathscr C^\infty$-diffeomorphism of $\mathbb U$ onto $\mathcal Q$.
\end{lemma}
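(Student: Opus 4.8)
\medskip

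The plan is to reduce the simply connected case to the classical RKC-Theorem (Theorem~\ref{RKC}) by uniformizing $\mathbb U$. Since $\mathbb U$ is bounded and simply connected but possibly not a Jordan domain, I cannot directly apply Theorem~\ref{RKC}, whose hypotheses concern a boundary homeomorphism between Jordan curves. Instead I would fix a Riemann map $\varphi \colon \DD \onto \mathbb U$ from the unit disk. The composition $h \circ \varphi|_{\partial \DD}$ need not be defined pointwise on $\partial \DD$ (the radial limits of $\varphi$ need not be injective), so the first key step is to make sense of the boundary correspondence: one uses that a monotone map $h\colon \partial \mathbb U \onto \partial \mathcal Q$ pulls back, via the prime-end / radial-limit structure of $\varphi$, to a monotone map $g \colon \partial \DD \onto \partial \mathcal Q$. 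Monotonicity is the crucial property here: the fibers $g^{-1}(q)$ are preimages under $\varphi$ of the connected fibers $h^{-1}(q)$ together with the arcs of $\partial\DD$ collapsed by $\varphi$, and Whyburn's theorem guarantees these remain connected. Thus $g$ is a continuous monotone surjection $\partial\DD \onto \partial\mathcal Q$.

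\medskip

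The second step is to compare harmonic extensions. Let $H$ be the harmonic extension of $h$ to $\mathbb U$ and let $G$ be the harmonic extension of $g$ to $\DD$. Because $\varphi$ is conformal, $G$ and $H\circ\varphi$ are both harmonic on $\DD$ with the same (generalized) boundary values $g$, so $H \circ \varphi = G$; equivalently $H = G \circ \varphi^{-1}$. It therefore suffices to prove the statement on the disk: that the harmonic extension $G$ of a monotone map $g\colon\partial\DD\onto\partial\mathcal Q$ onto a bounded convex domain is a $\mathscr C^\infty$-diffeomorphism of $\DD$ onto $\mathcal Q$. This is the ``monotone boundary data'' strengthening of Theorem~\ref{RKC}: one approximates $g$ uniformly by homeomorphisms $g_j\colon\partial\DD\onto\partial\mathcal Q$ (Youngs' approximation theorem), lets $G_j$ be their harmonic extensions, which are $\mathscr C^\infty$-diffeomorphisms onto $\mathcal Q$ by the classical RKC-Theorem, and passes to the limit. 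Uniform convergence $g_j\to g$ on $\partial\DD$ forces $G_j\to G$ locally uniformly on $\DD$ (Poisson integral), so $G$ is harmonic with values in $\overline{\mathcal Q}$. Smoothness is automatic. The injectivity of $G$ survives the limit by the classical Rad\'o-type argument: for each $w_0\in\mathcal Q$ and each direction, the level set analysis of the components of $G$ (the argument principle / Lewy-type Jacobian estimate applied to $G_j$ passes to $G$ because $G$ is nonconstant harmonic and its image omits no interior point of $\mathcal Q$) shows $\det DG>0$ everywhere and $G$ is a proper local homeomorphism onto $\mathcal Q$, hence a diffeomorphism. One also records that $G$ does not hit $\partial\mathcal Q$ in the interior: this uses convexity of $\mathcal Q$ (each boundary supporting line gives a harmonic function nonnegative on $\partial\DD$, hence positive inside by the maximum principle).

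\medskip

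I expect the main obstacle to be the boundary bookkeeping in the first step: carefully defining the monotone boundary map $g$ on $\partial\DD$ when $\varphi$ does not extend to a homeomorphism of the closures, and verifying that $H\circ\varphi$ and the Poisson extension $G$ of $g$ genuinely coincide — this requires knowing that $H$ has boundary values $h$ in the appropriate (radial / nontangential, almost everywhere) sense and that conformal invariance of harmonicity together with a uniqueness statement for bounded harmonic functions with prescribed a.e.\ boundary values applies. Once the problem is transported to the disk, the remaining work is the now-standard upgrade of RKC from homeomorphic to merely monotone boundary data via Youngs' approximation and a limiting argument, where the only delicate point is ruling out degeneration of the limit (nonconstancy of $G$ and $\det DG>0$), handled by the convexity of $\mathcal Q$ and the openness of $G_j(\DD)=\mathcal Q$.
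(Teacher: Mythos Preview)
The paper does not prove this lemma; it simply refers to~\cite{IOsimply}. Your overall strategy---conformally transplant $\mathbb U$ to the unit disk and then invoke a monotone-boundary version of RKC---is the natural one and almost certainly aligns with what that reference does.

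That said, your justification of the key first step has a gap. You assert that the pulled-back boundary map $g = h\circ\varphi|_{\partial\DD}$ is monotone because ``Whyburn's theorem guarantees'' the fibers $g^{-1}(q)=(\varphi|_{\partial\DD})^{-1}\bigl(h^{-1}(q)\bigr)$ remain connected. But Whyburn's theorem says that preimages of connected sets under a \emph{monotone} map are connected, and the boundary extension $\varphi|_{\partial\DD}\colon\partial\DD\to\partial\mathbb U$ is \emph{not} monotone when $\mathbb U$ is not Jordan. For a slit disk, for instance, each point on the open slit has two preimages on $\partial\DD$, one from each side, and these are separated by the preimage of the slit tip. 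So your stated reason does not apply. The conclusion that $g$ is monotone is in fact true, but it requires a genuine argument showing that the monotonicity of $h$ on the \emph{topological} boundary $\partial\mathbb U$ forces $h$ to be constant along those pieces of $\partial\mathbb U$ that are ``seen twice'' from the prime-end side (e.g.\ along a slit), so that the two preimages always receive the same $g$-value and the fiber stays connected. This interaction between the monotonicity hypothesis and the prime-end structure is exactly the content that distinguishes the simply connected case from the Jordan case; it is not merely bookkeeping.

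Your second step---upgrading classical RKC on the disk to monotone boundary data via Youngs' approximation and a limit---is correct in outline, though note that the direct Kneser--Choquet argument (level-set analysis of $\operatorname{Re}(\alpha G)$ using that each supporting line of $\mathcal Q$ meets $\partial\mathcal Q$ in two arcs whose $g$-preimages are connected) works verbatim for monotone data and avoids the limiting procedure altogether.
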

The proof of this lemma we referee to~\cite{IOsimply}.

\subsection{Holomorphic quadratic differentials}
Let $\varphi (z)\, \dtext z \otimes \dtext z$ be a holomorphic quadratic differential in $\X$ with isolated zeros, called {\it critical points}. Through every noncritical point there pass two $\mathscr C^\infty$-smooth orthogonal arcs. A {\it vertical arc}
is a $\mathscr C^\infty$-smooth curve $\gamma= \gamma (t)$, $a < t < b$, along which
\begin{equation}
[\dot{\gamma} (t)]^2 \varphi \big( \gamma (t)\big) < 0 \, , \qquad \quad a<t<b\, .
\end{equation}
A {\it vertical trajectory} of $\varphi$ in $\X$ is a maximal vertical arc, that is, not properly contained in any other vertical arc. The {\it horizontal arcs} and {\it horizontal trajectories} are defined in an exactly similar way, via the opposite inequality. Through every noncritical  point of $\varphi$ there passes a unique vertical (horizontal) trajectory. A trajectory whose closure contains a critical point of $\varphi$ is called a {\it critical trajectory}. There are at most a countable   number of critical trajectories.

Every noncritical vertical trajectory $\gamma \subset \U$  in a simply connected domain $\U$ is a {\it cross cut}, see Theorem~15.1 in~\cite{Stb}.
\begin{lemma}\label{lem:strebel}
Consider a vertical arc $\gamma \subset \mathbb U$ in a simply connected domain $\mathbb U$. Let $\beta$ be any locally rectifiable curve in $\mathbb U$ which contains the endpoints of $\gamma$. Then
\begin{equation}\label{length}
\int_{{\gamma}} \abs{\varphi}^{\nicefrac{1}{2}}\, \abs{\dtext z} \le \int_{\beta} \abs{\varphi}^{\nicefrac{1}{2}}\, \abs{\dtext z},
\end{equation}
\end{lemma}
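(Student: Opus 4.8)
The plan is to read both integrals as lengths in the conformal flat metric $|\varphi|^{1/2}\,|\dtext z|$ and to prove that $\gamma$ is a \emph{minimizing geodesic} between its endpoints $p$ and $q$. A few harmless reductions come first. If $\int_\beta|\varphi|^{1/2}\,|\dtext z|=\infty$ there is nothing to prove, and we may replace $\beta$ by the subarc joining $p$ to $q$. Since the zeros of $\varphi$ are isolated and $|\varphi|^{1/2}$ is locally bounded, rerouting $\beta$ by a tiny detour around each of the finitely many zeros it meets costs as little $|\varphi|^{1/2}$-length as we please, so we may assume $\beta$ avoids the zeros; likewise, after shrinking the compact arc $\overline\gamma$ slightly and letting the amount of shrinking go to $0$, we may assume $\overline\gamma$ too avoids the zeros. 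Finally, replace $\mathbb U$ by a smooth Jordan domain $\mathbb U'$ with $\overline\gamma\cup\beta\subset\mathbb U'$, $\overline{\mathbb U'}\subset\mathbb U$, and $\partial\mathbb U'$ disjoint from the finitely many zeros of $\varphi$ in $\overline{\mathbb U'}$.

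Recall the \emph{natural parameter}: on any simply connected subregion of $\mathbb U'$ avoiding the zeros, $\Phi(z)=\int\sqrt{\varphi}\,\dtext z$ is a single-valued holomorphic function that locally identifies $|\varphi|^{1/2}\,|\dtext z|$ with the Euclidean metric $|\dtext w|$ and carries vertical arcs to vertical straight segments. This already disposes of the \emph{zero-free case} in one line: then $\Phi$ is globally defined on $\mathbb U'$, $\Phi\circ\gamma$ is a vertical segment, so $\re\Phi(p)=\re\Phi(q)$ and $|\im\Phi(q)-\im\Phi(p)|=\int_\gamma|\varphi|^{1/2}\,|\dtext z|$, while $\int_\beta|\varphi|^{1/2}\,|\dtext z|=\int_\beta|\dtext\Phi|\ge\bigl|\int_\beta\im\,\dtext\Phi\bigr|=|\im\Phi(q)-\im\Phi(p)|$. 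The whole content of the lemma is to incorporate the zeros. The decisive structural fact is that $|\varphi|^{1/2}\,|\dtext z|$ is a flat metric with a cone point of angle $(k+2)\pi>2\pi$ at a zero of order $k$; hence $(\mathbb U',|\varphi|^{1/2}\,|\dtext z|)$ is non-positively curved (locally $\mathrm{CAT}(0)$) near every one of its points.

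To globalize, the plan is: (i) any two interior points of $\mathbb U'$ are joined by a shortest curve in the $|\varphi|^{1/2}$-metric, by an Arzel\`a--Ascoli argument on arclength-parametrized minimizing sequences (using that $|\varphi|$ is bounded below near $\partial\mathbb U'$, so minimizers cannot escape to the boundary); (ii) since $\mathbb U'$ is simply connected and non-positively curved, the Cartan--Hadamard principle applies, so every local geodesic is globally minimizing and minimizers between two points are unique; (iii) $\gamma$ is a local geodesic, immediately from the natural parameter, being a straight segment in a Euclidean chart near each of its points, $p$ and $q$ included (these are not zeros, after the reduction). Combining (i)--(iii), $\int_\gamma|\varphi|^{1/2}\,|\dtext z|=d_\varphi(p,q)\le\int_\beta|\varphi|^{1/2}\,|\dtext z|$.

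The main obstacle is step (ii): the $|\varphi|^{1/2}$-metric on the \emph{open} domain $\mathbb U'$ is incomplete, and its completion picks up boundary points of cone angle $<2\pi$, so the Cartan--Hadamard theorem does not apply verbatim; one must run the local-to-global argument over an interior exhaustion and keep track of geodesics as they cross cone points. A more hands-on route that bypasses metric geometry is also available, built on Theorem~15.1 of \cite{Stb}: extend $\gamma$ to its maximal vertical trajectory $\hat\gamma$, which (after a harmless perturbation of $\mathbb U'$) is noncritical and hence a crosscut dividing $\mathbb U'$ into two simply connected halves; cut $\beta$ at its intersections with $\hat\gamma$, push every excursion of $\beta$ into one half onto the corresponding subarc of $\hat\gamma$ --- which by induction on the number of zeros enclosed between the excursion and that subarc does not increase $|\varphi|^{1/2}$-length, the base case being the zero-free estimate above --- and conclude as before. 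Either way, the underlying facts about the $\varphi$-metric and its geodesics are classical and recorded in \cite{Stb}.
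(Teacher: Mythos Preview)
The paper does not prove this lemma at all; it simply cites \cite[Theorem~16.1]{Stb}. Your proposal is therefore not competing with a proof but with a reference, and in the end you defer to the same reference. That said, what you add is worthwhile: the reductions (detouring $\beta$ around zeros, trimming $\overline\gamma$ off possible critical endpoints, passing to a compactly contained Jordan subdomain $\mathbb U'$) are all sound, and your treatment of the zero-free case via the natural parameter $\Phi=\int\sqrt{\varphi}\,\dtext z$ is complete and correct---$\im\Phi$ is strictly monotone along $\gamma$, and $\int_\beta|\dtext\Phi|\ge|\im\Phi(q)-\im\Phi(p)|$ finishes it.

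Where your write-up stops being a proof is the general case, and you say so yourself. The $\mathrm{CAT}(0)$ route is the right geometric picture (cone angles $(k+2)\pi>2\pi$ at zeros of order $k$), but the Cartan--Hadamard step genuinely fails on the incomplete open domain, and ``run the argument over an interior exhaustion'' hides real work. Your alternative crosscut-induction sketch also has a soft spot: the induction is meant to be on the number of zeros enclosed between an excursion of $\beta$ and the corresponding subarc of $\hat\gamma$, but nothing in the setup forces that count to drop---all zeros could sit on one side of $\hat\gamma$, and an excursion could enclose every one of them---so as written the induction does not terminate. Strebel's own argument is closer in spirit to your first line of attack than to either sketch: he shows directly that a vertical arc is a shortest curve in the $\varphi$-metric by analyzing the (possibly multivalued) $\Phi$ along competing curves, controlling what happens at critical points. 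If you want a self-contained proof, that is the place to fill in; otherwise your proposal, like the paper, rests on \cite{Stb}.
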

For the proof of this lemma we refer to~\cite[Theorem 16.1]{Stb}.

\begin{lemma}[Fubini-like integration formula]\label{Fublem}
Let $\varphi (z) \,  \dtext z \otimes \dtext z$  be a holomorphic quadratic differential in a simply connected domain $\U$, $\varphi \not\equiv 0$. Suppose that $F$ and $G$ are measurable functions in $\U$ such that
\begin{equation}\label{eqfub1}
\int_\U \abs{\varphi (z)} \abs{F(z)}\, \dtext z < \infty \quad \mbox{and} \quad \int_\U \abs{\varphi (z)} \abs{G(z)}\, \dtext z < \infty.
\end{equation}
Then for almost every vertical trajectory\footnote{The union of noncritical vertical trajectories has full $2D$ Lebesgue  measure in $\mathbb U$.} $\gamma$ of $\varphi (z) \dtext z \otimes \dtext z$, we have
\begin{equation}\label{eqfub2}
\int_\gamma \abs{\varphi (z)}^{\nicefrac{1}{2}} \abs{F(z)}\, \abs{\dtext z} < \infty \quad \mbox{and} \quad \int_\gamma \abs{\varphi (z)}^{\nicefrac{1}{2}} \abs{G(z)} \abs{\dtext z} < \infty.
\end{equation}\begin{itemize}
\item If
\begin{equation}\label{eqfub3}
\int_\gamma \abs{\varphi (z)}^{\nicefrac{1}{2}} {F(z)}\, \abs{\dtext z} =\int_\gamma \abs{\varphi (z)}^{\nicefrac{1}{2}} {G(z)}\, \abs{\dtext z},
\end{equation}
for almost every vertical trajectory $\gamma$ of $\varphi (z)\,  \dtext z \otimes \dtext z$  then
\begin{equation}\label{eqfub4}
\int_\U \abs{\varphi (z)} {F(z)}\, \dtext z= \int_\U \abs{\varphi (z)} {G(z)}\, \dtext z
\end{equation}
\item If
\begin{equation}\label{eqfub5}
\int_\gamma \abs{\varphi (z)}^{\nicefrac{1}{2}} {F(z)}\, \abs{\dtext z} \le \int_\gamma \abs{\varphi (z)}^{\nicefrac{1}{2}} {G(z)}\, \abs{\dtext z},
\end{equation}
 for almost every vertical trajectory $\gamma$ of $\varphi (z) \dtext z \otimes \dtext z$ then
\begin{equation}\label{eqfub6}
\int_\U \abs{\varphi (z)} {F(z)}\, \dtext z \le  \int_\U \abs{\varphi (z)} {G(z)}\, \dtext z
\end{equation}

\end{itemize}
\end{lemma}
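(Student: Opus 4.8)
The guiding principle is the \emph{natural parameter} of $\varphi\,\dtext z\otimes\dtext z$. Near any point where $\varphi\neq0$ there is a conformal coordinate $w=\xi+i\eta$, a local branch of $w=\int\sqrt{\varphi}\,\dtext z$, in which $\varphi(z)\,\dtext z\otimes\dtext z$ becomes $\dtext w\otimes\dtext w$; the vertical trajectories become the vertical segments $\xi=\const$, one has $\abs{\varphi(z)}\,\dtext z=\dtext\xi\,\dtext\eta$ (since $\abs{w'(z)}^2=\abs{\varphi}$), and along such a segment $\abs{\varphi(z)}^{\nicefrac12}\abs{\dtext z}=\abs{\dtext\eta}$. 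Thus \emph{locally} the entire assertion is nothing but the classical Tonelli/Fubini theorem; all the work is to globalize it. The vehicle I would use is a single disintegration (coarea‑type) identity,
\begin{equation}\label{eq:disintegration}
\int_{\U}\abs{\varphi(z)}\,F(z)\,\dtext z=\int_{\Gamma}\left(\int_{\gamma}\abs{\varphi(z)}^{\nicefrac12}\,F(z)\,\abs{\dtext z}\right)\dtext\mu(\gamma),
\end{equation}
where $\Gamma$ is the set of noncritical vertical trajectories of $\varphi\,\dtext z\otimes\dtext z$ and $\mu$ is the transverse measure furnished by the horizontal foliation — explicitly, the trajectories crossing a subarc $\beta'$ of a horizontal arc $\beta$ carry $\mu$‑mass $\int_{\beta'}\abs{\varphi}^{\nicefrac12}\abs{\dtext z}=\bigl\lvert\int_{\beta'}\sqrt{\varphi}\,\dtext z\bigr\rvert$, and ``almost every vertical trajectory'' in the statement of the lemma refers to this $\mu$. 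Granting \eqref{eq:disintegration} for every measurable $F$ with $\int_{\U}\abs{\varphi}\abs{F}\,\dtext z<\infty$, the conclusions follow at once: applying it to $\abs F$ and to $\abs G$ and invoking \eqref{eqfub1} shows the fibre integrals in \eqref{eqfub2} are finite for $\mu$‑a.e.\ $\gamma$; applying it to $F-G\in\mathscr L^1(\abs\varphi\,\dtext z)$ — whose fibre integral splits as a difference thanks to \eqref{eqfub2} — turns the hypothesis \eqref{eqfub3} into \eqref{eqfub4}; and applying it to $G-F$ turns \eqref{eqfub5} into \eqref{eqfub6}, since then the fibre integrand is $\ge0$ for $\mu$‑a.e.\ $\gamma$ and $\mu\ge0$.

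To establish \eqref{eq:disintegration} I would first treat an arbitrary \emph{nonnegative} measurable $F$, where Tonelli removes all convergence concerns, and then reduce the general case to this one by writing $F=(\re F)^+-(\re F)^-+i[(\im F)^+-(\im F)^-]$ — a decomposition made legitimate by the fibrewise finiteness \eqref{eqfub2} — and using linearity. For $F\ge0$ I would invoke Strebel's structure theory of trajectories on the simply connected domain $\U$ (\cite{Stb}; simple connectivity enters through the fact recalled above that every noncritical vertical trajectory is a cross cut, which excludes closed or recurrent trajectories). Deleting from $\U$ the zeros of $\varphi$ together with the at most countably many critical vertical trajectories leaves an open set $\U^{\!*}$ of full $2D$ Lebesgue measure (the footnote to the lemma), foliated by the trajectories of $\Gamma$, and each of its connected components $R_k$ is a simply connected strip domain on which a single‑valued branch $\Phi_k=\int\sqrt\varphi\,\dtext z$ of the natural parameter develops $R_k$ injectively onto a plane domain $Q_k$ foliated by vertical line segments, carrying $\abs{\varphi}\,\dtext z$ to $\dtext\xi\,\dtext\eta$ and $\abs\varphi^{\nicefrac12}\abs{\dtext z}$ to $\abs{\dtext w}$. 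Moreover $\Gamma=\bigsqcup_k\Gamma_k$, where $\Gamma_k$ is the set of noncritical trajectories lying in $R_k$, because a noncritical vertical trajectory is disjoint from the zeros and from every other vertical trajectory, hence, being connected, contained in a single $R_k$. Tonelli in $Q_k$ then gives \eqref{eq:disintegration} with $\U,\Gamma$ replaced by $R_k,\Gamma_k$, the push‑forward of $\mu$ restricted to $\Gamma_k$ under $\gamma\mapsto\xi(\gamma)$ being one‑dimensional Lebesgue measure (well defined, as two branches of $\Phi_k$ differ by $w\mapsto\pm w+\const$, under which $\dtext\xi$ is invariant). Summing over $k$ and using that $\U\setminus\bigcup_kR_k$ is Lebesgue‑null yields \eqref{eq:disintegration} for $F\ge0$, and the reduction above finishes the general case.

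The genuinely nontrivial ingredient here is the appeal to Strebel's trajectory structure — that the noncritical vertical trajectories foliate a full‑measure open set whose components are simply connected strip domains developing injectively under the natural parameter; this is where the cross‑cut property, and hence simple connectivity of $\U$, is indispensable, and the structure theory itself rests on length comparisons of the type in Lemma~\ref{lem:strebel}. Once that is in hand the measure theory is a routine assembly of local Fubini/Tonelli theorems. The one point to be careful about is that this foliation may break into several strip domains, so one cannot in general fix a single global transversal to the vertical trajectories and must work with the transverse measure $\mu$ as above; with that understood, no further obstacle arises.
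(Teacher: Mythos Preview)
The paper does not give its own proof of this lemma; it simply refers to Strebel's monograph \cite{Stb}. Your argument --- passing to the natural parameter $w=\int\sqrt{\varphi}\,\dtext z$, in which $\abs{\varphi}\,\dtext z$ becomes planar Lebesgue measure and vertical trajectories become vertical lines, and then globalizing the resulting local Tonelli identity via Strebel's strip-domain decomposition of the noncritical trajectory structure on a simply connected domain --- is precisely the standard proof found there, and is correct.
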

Again, for the proof we refer to~\cite{Stb}. The following proposition follows from~\cite[Proposition 5.1]{CIKO}.

\begin{proposition}\label{pro:212}
Suppose that a monotone mapping $h \colon \overline{\X} \onto \overline{\Y}$ solves the Hopf-Laplace equation
\[h_z \overline{h_{\bar z}} = \varphi \, , \qquad \textnormal{where } \varphi \textnormal{ is holomorphic in $\X$.}\]
Then the preimage $h^{-1} (y_\circ)$ of a point $y_\circ \in \overline{\Y} $  is a continuum in $\overline{\X}$. If $h^{-1} (y_\circ)$ intersects a noncritical vertical trajectory of $\varphi (z) \, \dtext z \otimes \dtext z$, then it lies entirely in that trajectory.
\end{proposition}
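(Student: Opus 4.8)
The plan is to settle the ``continuum'' assertion in one line and then to put all the weight on the trajectory statement. Since $h\colon\overline{\X}\onto\overline{\Y}$ is continuous and $\overline{\X}$ is compact, every fiber $h^{-1}(y_\circ)$ is a nonempty compact subset of $\overline{\X}$, while its connectedness is precisely Morrey's monotonicity hypothesis on $h$; hence $h^{-1}(y_\circ)$ is a continuum, and nothing further is needed here.

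For the second claim I would fix $y_\circ$, write $C=h^{-1}(y_\circ)$, and assume $C$ meets a noncritical vertical trajectory $\gamma$ of $\varphi\,\dtext z\otimes\dtext z$ at a point $z_\circ\in\gamma$. The aim is to show that $C\cap\gamma$ is at once relatively closed and relatively open in $C$, so that a connectedness argument forces $C\subseteq\overline{\gamma}$, and hence $C\cap\X\subseteq\gamma$, since a noncritical vertical trajectory in the simply connected domain $\X$ is a cross cut (Theorem~15.1 in~\cite{Stb}) and therefore $\overline{\gamma}\setminus\gamma$ consists of two points of $\partial\X$. Relative closedness is the cheap half: a cross cut is relatively closed in $\X$, so $C\cap\gamma$ is closed in $C\cap\X$; the behaviour of $C$ along $\partial\X$ and the possibility that $C\cap\X$ be disconnected are minor points to be dealt with separately.

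The substance is the relative openness, and the engine driving it is the \emph{non-collapsing of horizontal arcs}. Near any noncritical point I would pass to the natural parameter $w=\xi+i\eta$ of $\varphi$, a conformal change of the independent variable, permissible because~\eqref{eq:hopf-laplace} is conformally invariant, in which $\varphi\,\dtext z\otimes\dtext z$ becomes $\dtext w\otimes\dtext w$ and the equation reads $h_w\overline{h_{\bar w}}\equiv 1$, equivalently
\[
|h_\xi|^2-|h_\eta|^2=4\,,\qquad \langle h_\xi, h_\eta\rangle=0 \qquad\text{a.e.}
\]
Thus in these coordinates the horizontal trajectories are horizontal segments, along which $h$ moves with speed $|h_\xi|=\sqrt{|h_\eta|^2+4}\ge 2$, whereas along vertical segments it moves with the possibly vanishing speed $|h_\eta|$. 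In particular $h$ is constant on no nondegenerate horizontal subarc, that is, $h$ collapses no horizontal arc of $\varphi$. Upgrading these almost-everywhere identities to genuine statements along the specific horizontal trajectories that occur is itself a small technical point, handled through the continuity of $h$ together with Lemma~\ref{lem:strebel}.

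To finish I would work in a natural-coordinate disk $B$ about $z_\circ$ in which $\gamma\cap B=\{\xi=0\}$ and suppose, toward a contradiction, that $C$ is not locally contained in $\{\xi=0\}$, say $C$ meets $\{\xi>0\}\cap B$. A continuum-theoretic extraction, namely the boundary bumping lemma, then produces a subcontinuum $E\subseteq C$ lying in the closed half-disk $\{\xi\ge 0\}\cap\overline{B}$, touching $\{\xi=0\}$, and of nondegenerate horizontal extent, so that the horizontal coordinate maps $E$ onto an interval $[0,c]$ with $c>0$; running the horizontal trajectory through a suitable point of $E$ back to $\{\xi=0\}$ would then force $h$ to take the value $y_\circ$ twice along that arc, contradicting the non-collapsing just established. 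This confinement step is where I expect the genuine difficulty to lie: neither the monotonicity of $h$ nor the uniform positive horizontal speed is, by itself, enough to prevent the fiber from spiralling away from $\gamma$, so one must combine them with the cross-cut structure of noncritical trajectories and with the geodesic inequality~\eqref{length} of Lemma~\ref{lem:strebel}. The complete execution is carried out in~\cite[Proposition~5.1]{CIKO}, which I would invoke for the remaining details.
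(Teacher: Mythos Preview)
The paper does not give its own proof here; it simply records that the proposition follows from \cite[Proposition~5.1]{CIKO}. Your proposal ends at exactly the same citation, so at the level of what is actually claimed you and the paper agree.

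The sketch you insert along the way is mostly on target: the continuum assertion is immediate from compactness plus the monotonicity hypothesis, the passage to the natural parameter is the right move, and the identity $|h_\xi|^2=|h_\eta|^2+4\geqslant 4$ correctly isolates the non-collapsing of horizontal arcs as the driving mechanism. One step, however, does not work as written. You say that ``running the horizontal trajectory through a suitable point of $E$ back to $\{\xi=0\}$ would then force $h$ to take the value $y_\circ$ twice along that arc.'' But nothing ensures that the point where that horizontal arc meets $\{\xi=0\}$ belongs to $C$, so you do not get a second occurrence of $y_\circ$; and even if $h$ hit $y_\circ$ at two distinct points of a horizontal arc, this alone would not contradict non-collapsing, which only excludes $h$ being constant on the entire arc, not merely repeating a value. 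The genuine argument in \cite{CIKO} organizes this confinement step differently. Since you explicitly flag this as the hard part and hand it off to \cite[Proposition~5.1]{CIKO}, your proposal is, in the end, aligned with the paper's own treatment.
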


Given a quadratic holomorphic differential $\varphi (z) \, \dtext z \otimes \dtext z$ we define two partial differential operators, called the {\it horizontal} and {\it vertical derivatives}
\begin{align*}
\partial_{_\mathsf H} = \frac{\partial}{\partial z} + \frac{\varphi}{\abs{\varphi}} \frac{\partial}{\partial \bar z} \qquad \mbox{ and } \qquad
\partial_{_\mathsf V} = \frac{\partial}{\partial z} - \frac{\varphi}{\abs{\varphi}} \frac{\partial}{\partial \bar z}.
\end{align*}
If $h$ satisfies the Hopf-Laplace equation $h_z \overline{h_{\bar z}}= \varphi$, then the horizontal and vertical trajectories of $\varphi (z) \dtext z \otimes \dtext z$ are the lines of maximal and minimal stretch for $h$. Precisely, the   following identities  hold.
\begin{align} & \abs{ \partial_{_\mathsf H} h } = \abs{h_z} + \abs{h_{\bar z}}, \qquad \abs{ \partial_{_\mathsf V} h } = \big|\abs{h_z} - \abs{h_{\bar z}}\big|  \label{hHhV} \\
& \abs{ \partial_{_\mathsf H} h } \cdot \abs{ \partial_{_\mathsf V} h }= \abs{J_h}, \qquad   \abs{ \partial_{_\mathsf H} h }^2-  \abs{ \partial_{_\mathsf V} h }^2=4 \abs{\varphi} \label{hHhVJh}
\end{align}
Here and after $J_h=\det Dh$.
As a consequence
\begin{equation}\label{becareful}
\abs{ \partial_{_\mathsf V} h }^2 \le \abs{J_h} \le \abs{ \partial_{_\mathsf H} h }^2.
\end{equation}

\begin{lemma}\label{lemtraj}
Let $\Omega$ be an open subset in $\C$ and $h \colon \Omega \to \C$ a locally Lipschitz solution of  the Hopf Laplace equation
\[h_z \overline{h_{\bar z}} = \varphi\, , \quad \textnormal{where } \varphi \textnormal{ is analytic in } \Omega \, . \]
Suppose that $J_h \equiv 0$ a.e. in $\Omega$. Then $h$ is constant on every vertical arc of the Hopf differential $\varphi (z)\, \dtext z \otimes \dtext z$.
\end{lemma}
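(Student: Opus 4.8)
The plan is to work locally along a noncritical vertical arc $\gamma$ and show that the restriction of $h$ to $\gamma$ has zero derivative almost everywhere with respect to arclength, hence is constant (since $h$ is locally Lipschitz, in particular absolutely continuous on rectifiable arcs). The key observation is the identity $\abs{\partial_{_{\mathsf V}} h}=\big|\,\abs{h_z}-\abs{h_{\bar z}}\,\big|$ from \eqref{hHhV} together with the hypothesis $J_h=\abs{h_z}^2-\abs{h_{\bar z}}^2\equiv 0$ a.e. Indeed, $J_h\equiv 0$ forces $\abs{h_z}=\abs{h_{\bar z}}$ a.e. in $\Omega$, and therefore $\abs{\partial_{_{\mathsf V}} h}=0$ a.e. So the vertical derivative of $h$ vanishes a.e. in the two-dimensional sense; the content of the lemma is to upgrade this to vanishing along (almost every, and then every) vertical trajectory.

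First I would fix a noncritical point $z_\circ$, pass to a natural parameter $w=\xi+i\eta$ near $z_\circ$ in which $\varphi(z)\,\dtext z\otimes\dtext z$ becomes $\dtext w\otimes \dtext w$, so that vertical arcs are vertical line segments $\xi=\mathrm{const}$ and $\partial_{_{\mathsf V}}$ becomes (a positive multiple of) $\partial/\partial\eta$. Since $h$ is locally Lipschitz, $\tilde h(w)=h(z(w))$ is locally Lipschitz in $w$, hence differentiable a.e. and absolutely continuous on a.e. horizontal and a.e. vertical segment in this chart (the usual ACL property of Lipschitz functions). The computation above shows $\partial \tilde h/\partial\eta=0$ for a.e. $w$ in the chart. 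By Fubini, for a.e. $\xi$ the function $\eta\mapsto \tilde h(\xi+i\eta)$ is absolutely continuous with derivative zero a.e., hence constant. This gives the conclusion on \emph{almost every} noncritical vertical trajectory.

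To remove the "almost every" and obtain it on \emph{every} vertical arc, I would use continuity of $h$ together with density: every vertical trajectory is a limit (locally uniformly, in the natural parameter) of nearby vertical trajectories, a full-measure set of which are ones on which $h$ is already known to be constant; passing to the limit and invoking continuity of $h$ shows $h$ is constant on the limiting trajectory as well, and hence on any sub-arc of it. Alternatively, and perhaps more cleanly, one can argue directly: fix an arbitrary vertical arc $\gamma$; in the natural parameter it is a segment $\{\xi_0+i\eta : \eta\in(a,b)\}$; for the exceptional $\xi_0$ choose a sequence $\xi_k\to\xi_0$ of "good" values, note $h(\xi_k+i\eta)=c_k$ is independent of $\eta$, and let $k\to\infty$ using uniform continuity of $h$ on a compact neighborhood to get $h(\xi_0+i\eta)=\lim_k c_k$, again independent of $\eta$.

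The main obstacle is the passage from the a.e.-statement (vertical derivative vanishes a.e. in the plane) to the everywhere-statement (constancy on \emph{each} vertical arc), because null sets in the plane can meet individual trajectories in sets of positive linear measure; this is exactly where local Lipschitz regularity is essential, since it provides the ACL property needed to apply Fubini in the natural chart and, afterwards, the equicontinuity needed for the limiting argument. Everything else — the natural-parameter normalization, the identity $\abs{\partial_{_{\mathsf V}}h}=\big|\,\abs{h_z}-\abs{h_{\bar z}}\,\big|$, and the elementary fact that an absolutely continuous function with a.e. zero derivative is constant — is routine.
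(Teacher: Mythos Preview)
Your proposal is correct and follows essentially the same route as the paper: pass to the natural parameter so that vertical arcs become vertical line segments, use the vanishing of the vertical derivative together with Fubini and absolute continuity to obtain constancy on almost every such segment, and then upgrade to every vertical arc by continuity of $h$. The only cosmetic difference is that you invoke the identity $\abs{\partial_{_{\mathsf V}} h}=\big|\,\abs{h_z}-\abs{h_{\bar z}}\,\big|$ from \eqref{hHhV} directly, whereas the paper carries out the equivalent chain-rule computation along a ``good'' arc by hand before performing the same Fubini-plus-continuity passage.
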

\begin{proof}
Choose and fix a vertical arc, say
\[\gamma = \{ z(t) \colon a<t<b \, , \; \varphi (z(t)) \dot{z}^2 (t) <0 \textnormal{ and } \abs{\dot{z} (t)}\equiv 1 \}  \, . \]
{\bf Case 1.} We say that $\gamma$ is a ``good'' vertical arc if  for almost  every $t \in (\alpha, \beta)$  the mapping $h$ is differentiable at $z(t)$ and $J_h \big(z(t)\big) =0$. We begin with the chain rule along a ``good'' vertical arc,
\[ \frac{\dtext}{\dtext t} h \big(z(t) \big) =h_z \big( z(t) \big) \dot{z} (t) + h_{\bar z} \big( z(t)\big) \overline{\dot{z} (t)} \, .  \]
Hence
\[
\begin{split}
\Big| \frac{\dtext }{\dtext t}  h \big( z(t) \big) \Big|^2 &= \abs{ h_z \big( z(t) \big) }^2 + \abs{ h_{\bar z} \big( z(t) \big) }^2 \\
&+ h_z \big(  z(t) \big) \overline{h_{\bar z} \big(  z(t) \big)} \dot{z}^2 (t) + \overline{h_z \big(  z(t) \big) \overline{h_{\bar z} \big(  z(t) \big)} \dot{z}^2 (t)}
\end{split}
\]
Since $\gamma$ is a vertical arc the function defined by
\[\gamma (t) \bydef \varphi \big( z(t)\big) \dot{z}^2 (t)\]
is smooth real-valued and negative. Clearly, for almost every $\alpha < t < \beta$ we have  $ \varphi \big( z(t)\big) = h_z \big(z(t) \big) \overline{h_{\bar z}  \big(z(t) \big) }$ and
\[\abs{\gamma (t)} = \abs{ h_z \big( z(t) \big)  } \, \abs{ h_{\bar z} \big( z(t) \big)  } = \abs{ h_z \big( z(t) \big)  }^2  =  \abs{ h_{\bar z} \big( z(t) \big)  }^2 \, , \]
because the Jacobian determinant $J_h \big(z(t)\big) = \abs{ h_z \big( z(t) \big)  }^2 - \abs{ h_{\bar z} \big( z(t) \big)  }^2 $ vanishes.
We conclude with the equation
\[  \Big| \frac{\dtext }{\dtext t}  h \big( z(t) \big) \Big|^2 = \abs{\gamma (t)}  +  \abs{\gamma (t)} + \gamma (t) +\gamma (t) =0 \, \qquad \textnormal{for a.e. } t\in (\alpha, \beta) \,  .\]
 Hence $h \big( z(t)\big)$ is constant on $\gamma$.\\
 {\bf Case 2.} Now, let  $\gamma$ be an arbitrary vertical arc. It suffices to show that $h$ is locally constant on $\gamma$, say on $\gamma \cap \mathcal R$, where $\mathcal R$ is a curved rectangular box swept out by vertical arcs (as well as by  horizontal arcs). Upon a conformal change of variables, locally defined by the rule $\xi = \int \sqrt{\varphi (z)} \, \dtext z$, we see that $\mathcal R$ becomes an Euclidean rectangle, denoted by $\mathcal R^\ast$. The  vertical and horizontal arcs of $\varphi (z)\, \dtext z \otimes \dtext z$ become vertical and horizontal straight segments of $\mathcal R^\ast$, respectively. The new function $h^\ast (\xi)  \bydef h \big( z(\xi)\big)$ gives rise to the Hopf quadratic differential on $\mathcal R^\ast$
 \[ \varphi^\ast (\xi)  \,   \dtext \xi \otimes \dtext \xi\, , \qquad \textnormal{where }  \varphi^\ast (\xi)=  h^\ast_\xi \,  \overline{h^\ast _{\bar \xi}}  \]
whose trajectories are the vertical and horizontal segments. Also, $J_{h^\ast} (\xi) =0$ for almost every $\xi \in \mathcal R^\ast$. By Fubini's theorem almost every vertical segment is a ``good'' vertical arc of the differential  $\varphi^\ast (\xi) \,    \dtext \xi \otimes \dtext \xi$. By Case 1., $h^\ast$ is constant on almost every vertical segment of $\mathcal R^\ast$. Finally, since $h^\ast$ is continuous, it is constant on every vertical segment. This means that $h$ is constant on every vertical  arc in $\mathcal R$, as desired.
\end{proof}

\section{Proof of Theorem~\ref{thm:main}}
\subsection{Setting and notation}
Let $g \colon \overline{ \X} \onto \overline{  \Y}$ be given as in Theorem~\ref{thm:main}.
We denote the class of monotone  mappings $H \colon  \overline{\X} \onto \overline{\Y} $ in the Sobolev space $\W^{1,2}(\X , \C) $ which coincide with $g$ on $\partial \X$ by $\mathscr M_g (\overline{\X}, \overline{\Y})$. Furthermore, we write
\[\mathscr H_g (\overline{\X}, \overline{\Y}) = \{ H \in \mathscr M_g (\overline{\X}, \overline{\Y}) \colon H \colon \overline{\X} \onto \overline{\Y} \textnormal{ is a homeomorphism}\}\]
and
\[ \diff_g (\overline{\X}, \overline{\Y}) = \{ H \in \mathscr M_g (\overline{\X}, \overline{\Y}) \colon H \colon {\X} \onto {\Y} \textnormal{ is a diffeomorphism}\} \, .  \]
Clearly, $\mathscr H_g (\overline{\X}, \overline{\Y})$ is non empty, because it contains $g \colon  \overline{\X} \onto \overline{Y}$. Now, the direct method in the Calculus of Variations  reveals  that there always exists $ h \in \mathscr M_g\,(\overline{\mathbb X} , \overline{\mathbb Y})$ with smallest Dirichlet energy. Indeed,  the energy-minimizing sequence of monotone mappings in  $\mathscr M_g (\overline{\mathbb X} , \overline{\mathbb Y})$ converges weakly in  $\W^{1,2} (\X, \C)$ and it converges uniformly  to a monotone mapping $h \in \mathscr M_g\,(\overline{\mathbb X} , \overline{\mathbb Y }) $. The uniform convergence will follow from a general observation, see Remark~\ref{rem:equivcont}.

 Furthermore, the energy of $h$ equals exactly the infimum of the energy among all homeomorphisms in $\mathscr H_g (\overline{\X}, \overline{\Y})$. In symbols,
\begin{equation}\label{eq:exist}
\min_{ H \in \mathscr M_g(\overline{\mathbb X} , \overline{\mathbb Y}) } \int_\mathbb X |DH(x)|^2\,\textnormal d x = \inf_{ H \in \mathscr H_g (\overline{\mathbb X} , \overline{\mathbb Y}) } \int_\mathbb X |DH(x)|^2\,\textnormal d x \, .
\end{equation}
This follows from a Sobolev variant of Youngs' approximation theorem~\cite{IOmono}. Also, according to the approximation result~\cite{IKO2}, the infimum energy among diffeomorphisms leads to the same minimum value. Precisely, the equation~\eqref{eq:exist} extends as
\begin{equation}\label{eq:approx}
 \inf_{ H \in \mathscr H_g (\overline{\mathbb X} , \overline{\mathbb Y}) } \int_\mathbb X |DH(x)|^2\,\textnormal d x = \inf_{ H \in \diff_g(\overline{\mathbb X} , \overline{\mathbb Y}) } \int_\mathbb X |DH(x)|^2\,\textnormal d x  \, .
\end{equation}

\begin{remark}\label{rem:equivcont}
Every homeomorphism $g \colon \X \onto \Y$ between planar Jordan domains (not necessarily simply connected) admits a unique continuous extension as a map from $\overline {\X }$ onto $\overline{\Y}$, still denoted by $g \colon \overline{\X} \onto \overline{\Y}$. The extension is monotone. Also the boundary map $g \colon \partial \X \onto \partial \Y$ is monotone. Now consider a general monotone map $g \colon \overline{\X} \onto \overline{\Y}$ (not necessarily an extension of a homeomorphism) and assume that $\Y$ is Lipschitz regular; that is, locally $\partial \Y$ becomes a graph of a Lipschitz function upon suitable rotation. Then we have the following uniform bound of the modulus of continuity of every monotone map $g \in \W^{1,2} (\X, \R^2)$
\begin{equation}\label{eq:modofcont}
\abs{g(x_1)-g(x_2)}^2 \le C_{\X, \Y} \frac{\int_\X \abs{Dg(x)}^2  \, \dtext x}{\log \left(e + 1/\abs{x_1-x_2} \right)}
\end{equation}
for all $x_1, x_2 \in \overline{\X}$. Here the constant $C_{\X, \Y}$ depends only on the domains $\X$ and $\Y$, but not on the mapping $g$. The proof of~\eqref{eq:modofcont} can be found in~\cite{IOdef}. This estimate shows that a family of monotone mappings $g \colon \overline{\X} \onto \overline{\Y}$ which is bounded in $\W^{1,2} (\X, \R^2)$ is equicontinuous. In particular, every sequence in this family contains a subsequence converging uniformly and weakly in $\W^{1,2} (\X, \R^2)$ to a monotone map from $\overline{\X}$ onto $\overline{\Y}$ in the Sobolev class $\W^{1,2} (\X, \R^2)$.
\end{remark} 

\subsection{Existence}\label{sec:ex}
The existence of Hopf-harmonic monotone mapping $h$ in Theorem~\ref{thm:main} will be achieved by minimizing the Dirichlet-energy within the class $ \mathscr M_g (\overline{\mathbb X} , \overline{\mathbb Y}) $. First, note that  the existence of mapping with smallest Dirichlet-energy in $\mathscr M_g (\overline{\mathbb X} , \overline{\mathbb Y})$  follows from~\eqref{eq:exist}. Second,  the standard outer variation does not apply to this mapping.  But one can perform the inner variation, a change of variables in $\X$,
\[\frac{\dtext }{\dtext t} \bigg|_{t=0}  \mathcal E_{\X} [h \circ \eta_t] =0\]
Here  $\eta_t \colon \X \onto \X $ is a  family  of diffeomorphisms $\eta_t \colon \X \onto \X $ depending smoothly on the parameter $t\in \R$ which extend continuously up to  $\overline{ \X }$ as the identity map on $\partial \X$. The inner variation leads us to the claimed Hopf-Laplace equation~\cite{Job}~\cite[\S 3.1]{IwaniecOnninenHopf},
\[ \frac{\partial}{\partial \bar z} (h_z \overline{h_{\bar z}} ) =0\, ,   \qquad h \in \mathscr M_g (\overline{\mathbb X} , \overline{\mathbb Y}) \, . \]

\subsection{Lipschitz Regularity}
The Lipschitz regularity follows from the work~\cite{IKOlip} which, among other things, tells us that a solution to the Hopf-Laplace equation~\eqref{eq:hopf-laplace} with non-negative Jacobian $J(x,h) \ge 0$, a.e., is a locally Lipschitz  mapping. The fact that a monotone mapping $h\in  \mathscr M_g (\overline{\mathbb X} , \overline{\mathbb Y})$ has  $J(x,h) \ge 0$, a.e., follows from  the approximation result in~\cite{IOmono}. Indeed, there exists a sequence of diffeomorphims $h_j\in \diff_g (\overline{\mathbb X} , \overline{\mathbb Y})$ such that $h_j \to h$ in $\W^{1,2} (\X , \C)$.  Now, $J(x, h_j) \ge 0$ because $g \colon \partial \X \onto \partial \Y $ is positively oriented. Combining this with the fact that $J(x,h_j) \to J(x,h)$ a.e. in $\X$, the claimed inequality $J(x,h) \ge 0$ follows.

\subsection{Partial harmonicity}\label{sec:ph} This term refers to the fact that $h$  restricted to $h^{-1} (\Y) \subset \X$ is a harmonic diffeomorphism. To see this we may assume that the Hopf product $h_z \overline{h_{\bar z}}= \varphi$ does not vanish identically for otherwise $h$ would be holomorphic in $\X$. This is immediately  from the estimate
\[\abs{h_{\bar z}}^2 \le \abs{h_z \overline{h_{\bar z}}} =0 \, .  \]

 Let $\mathbb D$ be any open convex subdomain in $\Y$, for instance any open disk and $\mathbb U = h^{-1} (\mathbb D)$. According to Lemma 2.8 and 2.9 in~\cite{IOmono} $\U$ is simply connected (not necessarily Jordan) and the boundary mapping  $h \colon \partial \mathbb U \onto \partial \mathbb  D$ is monotone. We appeal to a Rad\'o-Kneser-Choquet result for simply connected domains, see Lemma~\ref{lem:RKCsimply}. Accordingly, the harmonic extension of the boundary mapping $h \colon \partial \mathbb U \onto \partial \mathbb  D$  to $\mathbb U$, is $\mathscr C^\infty$-diffeomorphism of $\mathbb U$ onto $\mathbb D$, denoted by $H \colon \mathbb U \onto \mathbb D$. We will prove the opposite inequality,
\begin{equation}\label{eq:partialharm}
\E_\U [h]= \int_\mathbb U \abs{Dh}^2 \le \int_\mathbb U \abs{DH}^2  = \E_\U [H]\, .
\end{equation}
Before passing to the proof of this inequality let us show how it would imply the partial harmonicity of $h$.
Obviously,
\[  \int_\mathbb U \abs{DH}^2 \le \int_\mathbb U \abs{Dh}^2 \, . \]
This shows that $h=H$ in $\mathbb U$ and therefore $h$ is a harmonic diffeomorphism of $\mathbb U$ onto $\mathbb D$. This property applies to every disk $\mathbb D \subset \Y$ and, consequently, $h$ is a local diffeomorphism. On the other hand, the mapping $h$ being monotone,  is actually a global diffeomorphism from $h^{-1}({\Y})$ onto $\Y$.

\subsubsection{Proof of the inequality~\eqref{eq:partialharm}}\label{sec:php}
 The proof  is based on the following consequence of Lemma~\ref{lemmaidentity}.
 \begin{lemma}\label{lem:star}
Let $f= H^{-1} \circ h \colon \mathbb U \onto \mathbb U $ and $\varphi = h_z \overline{h_{\bar z}} $. Then we have
\begin{equation}\label{compine}
\begin{split}
 \E_\U [H]- \E_\U [h] & \ge   \frac{4}{\norm{\varphi}_{\mathscr L^1(\U)}}\,  \left[\int_\U \Abs{f_z- \frac{\varphi}{\abs{\varphi}} f_{\bar z}} \, \sqrt{\abs{\varphi}}\sqrt{\abs{\varphi \big(f  \big)}}   \right]^2 \\
 & - 4 \int_{\U} \abs{\varphi}.
\end{split}
\end{equation}
Here we assume that $\varphi\not\equiv 0$. The term $\frac{\varphi}{ \abs{\varphi}}$ is understood as equal to zero
at the points where $\varphi$ vanishes.
\end{lemma}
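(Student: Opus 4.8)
The plan is to apply the identity of Lemma~\ref{lemmaidentity} with the domain $\mathbb G$ there taken to be $\mathbb U$, the map $h$ being our (locally Lipschitz, partially harmonic candidate) $h\colon \mathbb U\onto\mathbb D$, and $H\colon \mathbb U\onto\mathbb D$ the RKC-harmonic diffeomorphism; then $f=H^{-1}\circ h\colon \mathbb U\onto\mathbb U$, exactly as in the statement. Strictly speaking Lemma~\ref{lemmaidentity} is stated for $\mathscr C^\infty$-diffeomorphisms, so the first thing I would do is note that it continues to hold here: $h$ is only monotone, not a diffeomorphism, but it is locally Lipschitz with $J_h\ge0$ a.e., and one approximates it in $\W^{1,2}$ by diffeomorphisms $h_j\in\diff_g$ (as in the Lipschitz-regularity subsection) and passes to the limit in~\eqref{identity}, using that the integrands are controlled and the Hopf differential $\varphi_j=(h_j)_z\overline{(h_j)_{\bar z}}\to\varphi$. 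This gives
\[
\E_\U[H]-\E_\U[h]=4\int_\U\Big[\frac{\abs{f_z-\sigma f_{\bar z}}^2}{\abs{f_z}^2-\abs{f_{\bar z}}^2}-1\Big]\abs{\varphi}\,\dtext z
+4\int_\U\frac{(\abs{h_z}-\abs{h_{\bar z}})^2\,\abs{f_{\bar z}}^2}{\abs{f_z}^2-\abs{f_{\bar z}}^2}\,\dtext z,
\]
where I have used $\abs{h_zh_{\bar z}}=\abs{\varphi}$ and $\sigma=\varphi/\abs{\varphi}$.

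Next I would discard the second integral, which is manifestly nonnegative ($\abs{f_z}^2-\abs{f_{\bar z}}^2=J_f>0$ since $f$ is a diffeomorphism of $\mathbb U$ onto itself), and split the first: $\frac{\abs{f_z-\sigma f_{\bar z}}^2}{J_f}-1$. The term $-4\int_\U\abs{\varphi}$ in the target inequality comes from the $-1$. So what remains is to bound $4\int_\U\frac{\abs{f_z-\sigma f_{\bar z}}^2}{J_f}\,\abs{\varphi}$ from below by $\frac{4}{\norm{\varphi}_{\mathscr L^1(\U)}}\big[\int_\U\abs{f_z-\sigma f_{\bar z}}\sqrt{\abs{\varphi}}\sqrt{\abs{\varphi(f)}}\big]^2$. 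This is the heart of the matter and is where the change of variables $w=H^{-1}$ enters: since $H$ is harmonic and a diffeomorphism, $\abs{\varphi(f(z))}$ should be compared with the Jacobian $J_f(z)$ via the fact that $H$ carries the flat differential $\dtext w\otimes\dtext w$ (or rather that $H$ itself contributes no Hopf differential of its own once one accounts for the pullback); more precisely one uses $J_f=\abs{h_z}^2-\abs{h_{\bar z}}^2$ divided by $(\abs{H_z}\circ f\text{-type factors})$, and the key point is the pointwise inequality relating $J_f$, $\abs{\varphi}$ and $\abs{\varphi(f)}$ that makes the Cauchy–Schwarz step work.

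Concretely, I would write, by Cauchy–Schwarz with weights,
\[
\Big[\int_\U\abs{f_z-\sigma f_{\bar z}}\sqrt{\abs{\varphi}}\sqrt{\abs{\varphi(f)}}\Big]^2
\le\Big(\int_\U\frac{\abs{f_z-\sigma f_{\bar z}}^2}{J_f}\abs{\varphi}\Big)\Big(\int_\U J_f\,\abs{\varphi(f)}\Big),
\]
and then the whole lemma reduces to the identity $\int_\U J_f(z)\,\abs{\varphi(f(z))}\,\dtext z=\norm{\varphi}_{\mathscr L^1(\U)}$, which is just the change of variables $w=f(z)$ (legitimate since $f\colon\mathbb U\onto\mathbb U$ is an orientation-preserving diffeomorphism) together with $\int_\U\abs{\varphi(w)}\,\dtext w=\norm{\varphi}_{\mathscr L^1(\U)}$. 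Rearranging gives exactly~\eqref{compine}. The main obstacle I anticipate is purely technical rather than conceptual: justifying the passage from diffeomorphisms to the merely locally Lipschitz monotone $h$ in Lemma~\ref{lemmaidentity} (equiintegrability of the four quantities appearing, and the a.e.\ convergence $\varphi_j\to\varphi$, $J_{h_j}\to J_h$, which were already invoked for the Lipschitz regularity), and checking that the degenerate set $\{\varphi=0\}$ — where $\sigma$ is set to $0$ — contributes nothing, since there $\abs{\varphi}=0$ kills every integrand on the right-hand side; once those measure-theoretic points are in place the inequality is a two-line Cauchy–Schwarz plus a change of variables.
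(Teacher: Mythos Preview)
Your skeleton is exactly the paper's: approximate $h$ by diffeomorphisms, invoke Lemma~\ref{lemmaidentity}, drop the nonnegative second integral, apply Cauchy--Schwarz (the paper calls it H\"older) with weight $\abs{\varphi\circ f}$, and close with the change-of-variables bound $\int J_f\,\abs{\varphi\circ f}\le\norm{\varphi}_{\mathscr L^1(\U)}$. The Cauchy--Schwarz step you wrote is verbatim the paper's.

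There is, however, a real issue with your order of operations, and it is not merely bookkeeping. You propose to pass to the limit \emph{in the identity}~\eqref{identity} first, obtaining the equality for $h$ itself, and only then apply Cauchy--Schwarz. But both integrands on the right of~\eqref{identity} carry a factor $1/J_f$, and your justification that this is harmless --- ``$J_f>0$ since $f$ is a diffeomorphism of $\U$ onto itself'' --- is circular: the entire purpose of this lemma, via~\eqref{eq:partialharm}, is to establish that $h=H$ on $\U$, i.e.\ that $h$ (and hence $f=H^{-1}\circ h$) is a diffeomorphism there. At this stage $h$ is only monotone and locally Lipschitz with $J_h\ge 0$ a.e., so $J_f$ may vanish on a set of positive measure and the limit identity need not even make sense. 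The same circularity contaminates your change-of-variables step, where you again invoke ``$f$ is an orientation-preserving diffeomorphism''.

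The paper's remedy is simply to reverse the order: apply H\"older at the approximation level, to $h^j$ and $f^j=H^{-1}\circ h^j$ (which \emph{are} diffeomorphisms, so $J_{f^j}>0$ and $\int_{\U'} J_{f^j}\abs{\varphi\circ f^j}=\int_{f^j(\U')}\abs{\varphi}\le\int_\U\abs{\varphi}$ is a genuine change of variables), obtain the inequality~\eqref{compine} for $h^j$, and only then let $j\to\infty$. The limiting integrands no longer contain $1/J_f$ and converge without trouble. A secondary point you omit: the paper works first on a compactly contained $\U'\Subset\U$ and lets $\U'\nearrow\U$ at the end, since $f^j\to f$ in $\W^{1,2}$ is only available locally (the differential of $H^{-1}$ need not be bounded up to $\partial\U$).
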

\begin{proof} By the approximation result in~\cite{IOmono}, there exist a sequence of diffeomorphisms $h^j \colon \mathbb U \onto \mathbb D$, converging to $h$ uniformly and  in $\W^{1,2} (\mathbb U, \mathbb C)$. Moreover, each $h^j$ extends continuously to $\overline{\mathbb U}$ with $h^j =h$ on $\partial \mathbb U$. Let $\mathbb U'$ be a compactly contained subdomain of $\mathbb U$. Write $f^j = H^{-1} \circ  h^j \colon  \mathbb U \onto \mathbb U$. Applying Lemma~\ref{lemmaidentity} we obtain
\begin{equation}\label{eq:34}
\begin{split}
\int_{f^j(\mathbb U')} \abs{DH}^2  - \int_{\mathbb U'} \abs{Dh^j}^2 & = 4 \int_{\mathbb U'} \left[\frac{\abs{f^j_z-\sigma^j (z) f^j_{\bar z}}^2}{\abs{f^j_z}^2-\abs{f^j_{\bar z}}^2} -1 \right]
\, \abs{h^j_z \, h^j_{\bar z}}\\
& + 4 \int_{\mathbb U'} \frac{(\,\abs{h^j_z}\,-\,\abs{h^j_{\bar z}}\,)^2\cdot \abs{f^j_{\bar z}}^2
}{\abs{f^j_z}^2-\abs{f^j_{\bar z}}^2} \, ,
\end{split}
\end{equation}
where
\[
\sigma^j = \sigma^j(z) = \begin{cases}
{h^j_z \,  \overline{h^j_{\bar z}}} {\; \abs{h^j_z  \, \overline{h^j_{\bar z}}}^{-1}} \qquad &\textnormal{\;if } h^j_z\overline{h^j_{\bar z}}\ne 0 \\
0 & \textnormal{\;otherwise.}
 \end{cases}
\]
Since $f^j$ are   sense-preserving diffeomorphisms, the last integral in~\eqref{eq:34} is  nonnegative,
\[ \int_{\mathbb U'} \frac{(\,\abs{h^j_z}\,-\,\abs{h^j_{\bar z}}\,)^2\cdot \abs{f^j_{\bar z}}^2
}{\abs{f^j_z}^2-\abs{f^j_{\bar z}}^2}\,\dtext z \ge 0 \, .
\]
We estimate the first integral  by H\"older's inequality,
\[
\int_{\mathbb U'} \frac{\abs{f^j_z-\sigma^j (z) f^j_{\bar z}}^2}{\abs{f^j_z}^2-\abs{f^j_{\bar z}}^2} \, \abs{h^j_z\, h^j_{\bar z}}\, \dtext z \ge \frac{\left(  \int_{\mathbb U'} \, \abs{f^j_z - \sigma^j f^j_{\bar z} }\sqrt{\abs{ h^j_z \, h^j_{\bar z} }} \, \sqrt{\abs{\varphi (f^j(z))} } \,  \dtext z \right)^2}{\int_{\mathbb U'} J(z, f^j)\,  \abs{\varphi (f^j(z))} \, \dtext z}
\]
The denominator is bounded from above, by the $\mathscr L^1$-norm of $\varphi$,
\[ \int_{\mathbb U'} J(z, f^j) \, \abs{\varphi (f^j(z))} \, \dtext z = \int_{f^j (\mathbb U')} \abs {\varphi} \le \int_{\mathbb U} \abs {\varphi} \]
Since $\mathbb U \supset f^j (\mathbb U)$ for sufficiently large $j$, we have
\begin{equation}\label{eq:1011}
\begin{split}
\E_\U [H]- \E_{\U'} [h^j] & \ge 4 \,   \frac{\left(  \int_{\mathbb U'} \, \abs{f^j_z - \sigma^j f^j_{\bar z} }\sqrt{\abs{ h^j_zh^j_{\bar z} }} \, \sqrt{\abs{\varphi (f^j(z))} } \,   \dtext z \right)^2}{ \int_{\mathbb U} \abs {\varphi}\,  \dtext z} \\ &   - 4\, \int_{\U'} \abs{h^j_zh^j_{\bar z}}\, \dtext z
 \end{split}
 \end{equation}
Next, we let $j \to \infty$. We may assume, passing to a subsequence if necessary, that $h^j_z$ and $h^j_{\bar z}$ converge almost everywhere to $h_z$ and $h_{\bar z}$, respectively. Since the sequence $f^j= H^{-1} \circ h^j \colon \U \onto \U$ is converging to $f$ uniformly and
in $\mathscr W^{1,2}(\U')$ on subdomains $\U' \Subset \U$, it follows that
\[  \abs{f_z^j - \sigma^j f^j_{\bar z}} \, \sqrt{\abs{h^j_zh^j_{\bar z}} } \; \to \;  \abs{f_z - \sigma f_{\bar z}} \sqrt{\abs{h_z h_{\bar z}}}  \qquad \textnormal{in } \mathscr L^1 (\U')\]
and
\[\sqrt{\abs{\varphi (f^j(z))} } \to \sqrt{\abs{\varphi (f(z))} }\, ,  \quad \textnormal{everywhere.}  \]
Combining these facts with~\eqref{eq:1011}, we conclude
\[\E_\U [H]- \E_{\U'} [h] \ge 4 \frac{\left[\int_{\U'}\left| f_z-\sigma f_{\bar z}\right| \,\sqrt{\abs{\varphi (z) }} \, \sqrt{\abs{\varphi \big(f (z) \big)}}   \, \dtext z\right]^2} {\int_{\U} \abs{\varphi (z)}  \,\dtext z}  -4\int_{\U'}\abs{\varphi}.
  \]
Finally, since  $\U'$ was an arbitrary compact subset of $\U$, Lemma~\ref{lem:star} follows.
\end{proof}

Now having Lemma~\ref{lem:star}, the inequality~\eqref{eq:partialharm} would follow provided we can show that
\begin{equation}\label{eq:1111}
\int_\U \Abs{f_z- \frac{\varphi}{\abs{\varphi}} f_{\bar z}} \, \sqrt{\abs{\varphi}} \,  \sqrt{\abs{\varphi \circ f }} \, \dtext z    \ge \int_\U \abs{\varphi} \, \dtext z
\end{equation}
\begin{proof}[Proof of~\eqref{eq:1111}]
For almost every vertical noncritical trajectory $\gamma$, the mapping $f$ is locally absolutely continuous on $\gamma$. Let $\hat{\gamma}$ be a maximal subarc of $\gamma$ which lies in $\U$ so its endpoints belong to $\partial \U$.
Now, the change of variable formula gives
\begin{equation}\label{eq:11112}
\int_{\hat{\gamma}}   \Abs{f_z- \frac{\varphi}{\abs{\varphi}} f_{\bar z}} \, \sqrt{\abs{\varphi \circ f }}  = \int_{\hat{\gamma}}   \abs{f_{_\mathsf V}} \sqrt{\abs{\varphi \circ f }}  = \int_{f ( \hat{\gamma})} \sqrt{\abs{\varphi}} \, .
\end{equation}
Applying Lemma~\ref{lem:strebel} to the curve $\beta = f(\hat{\gamma})$ we have
\[ \int_{f  (\hat{\gamma})} \sqrt{\varphi}  \ge  \int_{\hat{\gamma}} \sqrt{\varphi}  \, .  \]
Combining this estimate with~\eqref{eq:11112}, we obtain
\[\int_{\hat{\gamma}}   \Abs{f_z- \frac{\varphi}{\abs{\varphi}} f_{\bar z}} \,  \sqrt{\abs{\varphi \circ f }} \ge  \int_{\hat{\gamma}} \sqrt{\varphi}  \, . \]
Now, the claimed inequality~\eqref{eq:1111} follows from this by the Fubini formula of  integration, see~\eqref{eqfub5}--\eqref{eqfub6}.
\end{proof}
This also completes the proof of~\eqref{eq:partialharm} and proves partial harmonicity. In general $h^{-1} (\Y)$ may or may not touch the boundary of $\X$. It is exactly at this point the somewhere convexity of $\Y$ comes into play.
\begin{lemma}\label{lem:touch}
Suppose that $\Y$ is somewhere convex and $h \colon \overline{\X} \onto \overline{\Y}$ is a monotone Hopf-harmonic mapping. Then $h^{-1} (\Y)$ touches $\partial \X$ along an open arc. Precisely $\overline{h^{-1} (\Y)}$ contains an open arc of $\partial \X$.
\end{lemma}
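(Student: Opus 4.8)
We outline the argument. The plan is to argue by contradiction, using the partial harmonicity already established together with the local geometry coming from somewhere convexity.

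First dispose of a trivial case: if $\varphi:=h_z\overline{h_{\bar z}}\equiv 0$ then $h$ is holomorphic, hence --- being monotone onto $\overline\Y$ --- conformal, so $h^{-1}(\Y)=\X$ and there is nothing to prove; assume therefore $\varphi\not\equiv 0$. Put $V:=h^{-1}(\Y)$. By partial harmonicity $h\colon V\onto\Y$ is a harmonic diffeomorphism; in particular $h^{-1}(w)$ is a single point for every $w\in\Y$. Recall that $h$ restricted to $\partial\X$ is the homeomorphism $g\colon\partial\X\onto\partial\Y$, and that $h(\overline V)=\overline\Y$, hence $h(\partial V)=\partial\Y$. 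Put $\Sigma:=\partial V\cap\X$; since $\overline V\cap\partial\X=\partial V\cap\partial\X$, the conclusion of the Lemma is exactly that the closed set $\overline V\cap\partial\X$ has nonempty interior in $\partial\X$. Assume, for contradiction, that it is nowhere dense.

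Now bring in somewhere convexity. For every small $r>0$ the set $R_r:=\mathbb{D}(y_\circ,r)\cap\Y$ has convex closure $\overline{R_r}=\overline{\mathbb{D}(y_\circ,r)}\cap\overline\Y$, is a Jordan domain, and $\partial R_r=\alpha_r\cup\rho_r$ where $\alpha_r:=\partial\Y\cap\mathbb{D}(y_\circ,r)$ is an open sub-arc of $\partial\Y$ about $y_\circ$ and $\rho_r:=\partial\mathbb{D}(y_\circ,r)\cap\Y\subset\Y$; fix such an $r_0$ and write $\alpha:=\alpha_{r_0}$. Under the contradiction hypothesis $g(\overline V\cap\partial\X)$ is closed and nowhere dense in $\partial\Y$, so $h(\Sigma)\supseteq h(\partial V)\setminus g(\overline V\cap\partial\X)=\partial\Y\setminus g(\overline V\cap\partial\X)$ contains an open dense subset of $\partial\Y$; in particular $G:=h(\Sigma)\cap\alpha$ contains an open dense subset of $\alpha$ and therefore has positive length. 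Since a convex arc is twice differentiable outside a set of length zero, we may choose $\zeta\in\Sigma\subset\X$ with $y^\ast:=h(\zeta)\in\alpha$ a point at which $\partial\Y$ is twice differentiable.

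After a similarity of the target we may take $y^\ast=0$, the real axis the supporting line of $\overline{R_{r_0}}$ at $0$, and $\Y$ locally contained in the closed upper half-plane; then $\Y$ admits an interior tangent disk at $0$. Consequently, on a small ball $B(\zeta,\delta)\subset\X$ the map $h$ takes values in $\overline\Y$ near $0$, so $u:=\im h\ge 0$ on $B(\zeta,\delta)$ with $u(\zeta)=0$, and $u$ is harmonic and nonconstant on each component of $V\cap B(\zeta,\delta)$ (a diffeomorphic image of an open set in $\Y$ cannot lie in the line $\{\,\im w=0\,\}$).

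The last step is the main obstacle. Since $h|_V$ is a harmonic diffeomorphism onto $\Y$ which carries $\partial V$ near $\zeta$ into the convex arc $\alpha\subset\partial\Y$ near $0$, I would invoke boundary regularity for harmonic diffeomorphisms onto a domain bounded near $0$ by a twice-differentiable convex arc admitting an interior disk, to obtain an interior tangent ball to $V$ at $\zeta$. Hopf's boundary-point lemma would then give $\partial u/\partial\nu(\zeta)\neq 0$ for the outward normal $\nu$, hence $\nabla(\im h)(\zeta)\neq 0$. Feeding this back through a barrier/reflection argument at $\zeta$ --- Schwarz reflection across the analytic boundary arc in the locally straight case, comparison with the osculating circle in the strictly convex case --- one concludes that $Dh$ is nonsingular at $\zeta$, so $h$ is a local diffeomorphism at the \emph{interior} point $\zeta$ and maps a neighbourhood of $\zeta$ onto a neighbourhood of $0=y^\ast$ in $\mathbb{C}$; such a neighbourhood meets $\mathbb{C}\setminus\overline\Y$, contradicting $h(\overline\X)\subseteq\overline\Y$. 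Hence $\overline V\cap\partial\X$ is not nowhere dense, and being closed it contains an open arc of $\partial\X$, which is the assertion of the Lemma. The genuinely delicate parts --- and exactly where somewhere convexity is used --- are the boundary regularity of $h|_V$ up to the merely-convex arc $\alpha$ and the barrier/reflection step yielding nonsingularity of $Dh$ at $\zeta$; the rest is soft point-set topology.
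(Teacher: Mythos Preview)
Your proposal has a genuine gap at exactly the point you flag as ``the main obstacle,'' and the gap is not merely technical. You want to produce an interior tangent ball to $V=h^{-1}(\Y)$ at $\zeta\in\partial V\cap\X$ by invoking ``boundary regularity for harmonic diffeomorphisms.'' But boundary regularity theorems for harmonic maps run the other way: they require regularity of \emph{both} boundaries (source and target) as input and deliver regularity of the map as output. Here you know nothing about $\partial V$ near $\zeta$ --- it is the level set $h^{-1}(\partial\Y)$ of a map that is only locally Lipschitz on $\X$ and harmonic only inside $V$ --- and you are trying to \emph{deduce} an interior ball condition for $\partial V$. The inverse $h^{-1}\colon\Y\to V$ is not harmonic, so there is no direct transfer of the interior disk on the $\Y$ side. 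Even granting the interior ball, the subsequent ``barrier/reflection'' step needs $h$ to be $\mathscr C^1$ (with nonsingular differential) in a full neighbourhood of $\zeta$, not just in $V$, in order to invoke the inverse function theorem and force $h$ outside $\overline\Y$; Schwarz reflection would require $\partial V$ to be real-analytic near $\zeta$, which is far stronger than what you have. So both halves of the crucial step are unjustified.

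The paper avoids all of this by never looking at $\partial V$. Instead it works with the \emph{sealed boundary cell} $\mathcal Q^+:=\mathbb D(y_\circ,\varepsilon)\cap\overline\Y$ and its preimage $\mathcal U^+:=h^{-1}(\mathcal Q^+)$. Because $h=g$ is a homeomorphism on $\partial\X$, the set $\Gamma:=\mathcal U^+\cap\partial\X=g^{-1}(\mathbb D(y_\circ,\varepsilon)\cap\partial\Y)$ is automatically an open arc in $\partial\X$, and $\mathcal U:=\mathcal U^+\cap\X$ is simply connected with $h\colon\partial\mathcal U\onto\partial\mathcal Q$ monotone. One then reruns \emph{verbatim} the energy argument already used for partial harmonicity (Lemma~\ref{lem:RKCsimply} together with the inequality \eqref{eq:partialharm}) with the convex cell $\mathcal Q$ playing the role of the interior disk. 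This yields $h=H$ on $\mathcal U$, whence $\mathcal U\subset h^{-1}(\Y)$ and $\overline{\mathcal U}\supset\Gamma$. The point is that somewhere convexity is used not for boundary regularity of $h$, but to furnish a convex \emph{target} cell touching $\partial\Y$, so that the Rad\'o--Kneser--Choquet replacement and the trajectory-based energy inequality apply exactly as in \S\ref{sec:php}.
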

\begin{proof}
Recall that the somewhere convexity of $\Y$ means that there is an open  disk $\mathbb D$ centered at $y_\circ \in \partial \Y$ so that the intersection $\mathbb D \cap \Y$  (called {\it boundary cell}) is  a convex set.  Denote it by $\mathcal Q= \mathbb D \cap \Y$. We introduce the so-called {\it sealed boundary cell} $\mathcal Q^+ \bydef \mathbb D \cap \overline{\Y}$. Thus $\mathcal Q^+= \mathcal Q \cup \mathcal C$, where $\mathcal C = \mathbb D \cap \partial \Y$ is an open arc in $\partial \Y$. Clearly, $\mathcal Q^+$ is a connected subset of $\overline{\Y}$. Consider the preimage of the sealed boundary cell
\[\mathcal U^+ \bydef h^{-1} (\mathcal Q^+) \subset \overline{\X} \, . \]
Since $h \colon \overline{\X} \onto \overline{\Y}$ is monotone, $\mathcal U^+$ is connected. Now we have $\mathcal U^+ = \mathcal U \cup \Gamma$, where $\mathcal U=h^{-1} (\mathcal Q^+) \cap \X$ is a simply connected domain and $\Gamma = h^{-1} (\mathcal Q^+) \cap \partial \X$ is an open arc in $\partial \X$. Moreover, the mapping $h \colon \partial \mathcal U \onto \partial \mathcal Q$ is monotone. We refer to~\cite{IOmono} for the proof of these topological facts. It should be emphasized that $h^{-1} (\mathcal Q)$ need not be equal to $\mathcal U$.

In much the same way as in the proof of partial harmonicity, we appeal to  the Rad\'o-Kneser-Choquet theorem for simply connected domain, see Lemma~\ref{lem:RKCsimply}. Accordingly, let $H \colon \mathcal U \onto \mathcal Q$ be the harmonic extension of the boundary mapping $h \colon \partial \mathcal U \onto \partial \mathcal Q$. Now, the proof of the inequality~\eqref{eq:partialharm} in \S\ref{sec:php} goes in similar lines, namely we obtain
\[\int_{\mathcal U} \abs{Dh}^2 \le \int_{\mathcal U} \abs{DH}^2 \]
and conclude that $h=H$ on $\mathcal U$. This amounts to saying that
\[h^{-1}({\Y}) \supset \mathcal U \textnormal{ which touches } \partial \X \textnormal{ along } \Gamma \, . \]
\end{proof}

Before proceeding  the uniqueness  of  Hopf-harmonic monotone mappings, a proof of Theorem~\ref{Thm:main2}, let us  give an equivalent characterization for maps in question. In Section~\ref{sec:ex} we showed that
a mapping $h \colon \overline{\X} \onto \overline{\Y}$ which minimizes the Dirichlet energy among Sobolev monotone mapping in $\mathscr M_g (\overline{\X}, \overline{\Y})$ is a Hopf-harmonic monotone mapping. Actually, the converse also holds.
\subsection{Monotone Hopf-harmonics are the energy minimizers}

\begin{proposition}\label{pro:412}
Let $\Y$ be a simply connected Lipschitz domain in $\mathbb C$ and $g \colon \overline{ \X }\onto \overline{\Y }$ be an orientation-preserving  homeomorphism  of a Sobolev class  $\W^{1,2} (\X, \C)$, defined on a Jordan domain $\X$. Then  $h\in \mathscr M_g (\overline{\X}, \overline{\Y})$  is Hopf-harmonic if and only if
\[\begin{split}
\int_\X \abs{Dh(x)}^2 \, \textnormal d x & = \min_{ H \in \mathscr M_g(\overline{\mathbb X} , \overline{\mathbb Y}) } \int_\mathbb X |DH(x)|^2\,\textnormal d x \\ & = \inf_{ H \in \diff_g (\overline{\mathbb X} , \overline{\mathbb Y}) } \int_\mathbb X |DH(x)|^2\,\textnormal d x  \, . \end{split} \]
\end{proposition}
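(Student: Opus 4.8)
The plan is to prove the non-trivial half of the equivalence. The second equality in the displayed chain is nothing but~\eqref{eq:exist}--\eqref{eq:approx}, and the implication \emph{energy-minimal $\Rightarrow$ Hopf-harmonic} was already obtained in~\S\ref{sec:ex} through the inner variation; so the one thing left to prove is the converse: if $h\in\mathscr M_g(\overline\X,\overline\Y)$ is Hopf-harmonic, then $\E_\X[h]\le\E_\X[H]$ for every $H\in\mathscr M_g(\overline\X,\overline\Y)$. By~\eqref{eq:exist}--\eqref{eq:approx} it is enough to check this when $H\in\diff_g(\overline\X,\overline\Y)$. I would then rerun the partial-harmonicity argument of~\S\ref{sec:php} globally on $\X$, with the pair $(\mathbb U,\mathbb D)$ there replaced by $(\X,\Y)$ and the harmonic extension replaced by the competing diffeomorphism $H$.

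First I would dispose of the degenerate case. Write $\varphi=h_z\overline{h_{\bar z}}$; this is holomorphic in $\X$ and $\abs\varphi\le\tfrac12\abs{Dh}^2\in\mathscr L^1(\X)$. If $\varphi\equiv0$ then $h$ is holomorphic, so $\abs{Dh}^2=2J_h$ a.e.; since $J_h\ge0$ a.e.\ (diffeomorphic approximation, \cite{IOmono}) and $h$ is monotone onto $\overline\Y$, we get $\E_\X[h]=2\int_\X J_h=2\abs\Y$, the least possible energy in $\mathscr M_g(\overline\X,\overline\Y)$, and we are done. So assume $\varphi\not\equiv0$. Fix $H\in\diff_g(\overline\X,\overline\Y)$ and, by the Sobolev version of Youngs' approximation theorem~\cite{IOmono}, choose diffeomorphisms $h^j\in\diff_g(\overline\X,\overline\Y)$ with $h^j\to h$ uniformly and in $\W^{1,2}(\X,\C)$. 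With $f^j=H^{-1}\circ h^j\colon\overline\X\onto\overline\X$ (a diffeomorphism of $\X$ fixing $\partial\X$), I would apply the identity of Lemma~\ref{lemmaidentity} on subdomains $\X'\Subset\X$, discard the nonnegative second integral, estimate the first by H\"older's inequality with denominator $\int_{f^j(\X')}\abs\varphi\le\norm\varphi_{\mathscr L^1(\X)}$ exactly as in the proof of Lemma~\ref{lem:star}, and then let $j\to\infty$ and $\X'\nearrow\X$. This should produce the global analogue of~\eqref{compine},
\[
\E_\X[H]-\E_\X[h]\ \ge\ \frac{4}{\norm\varphi_{\mathscr L^1(\X)}}\left[\int_\X\Abs{f_z-\frac{\varphi}{\abs\varphi}f_{\bar z}}\,\sqrt{\abs\varphi}\,\sqrt{\abs{\varphi(f)}}\,\dtext z\right]^2-4\int_\X\abs\varphi\,\dtext z,
\]
where $f=H^{-1}\circ h\colon\overline\X\onto\overline\X$ is monotone with $f\big|_{\partial\X}=\id$.

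It would then remain to establish the global version of~\eqref{eq:1111}, namely $\int_\X\abs{f_z-\tfrac{\varphi}{\abs\varphi}f_{\bar z}}\sqrt{\abs\varphi}\sqrt{\abs{\varphi(f)}}\,\dtext z\ge\int_\X\abs\varphi\,\dtext z$; with it the right-hand side above becomes $\ge\frac{4}{\norm\varphi_1}\norm\varphi_1^{\,2}-4\norm\varphi_1=0$, finishing the proof. For this inequality I would argue trajectory by trajectory as in the proof of~\eqref{eq:1111}: for almost every noncritical vertical trajectory $\gamma$ of $\varphi\,\dtext z\otimes\dtext z$ (a cross cut of $\X$, cf.~\cite{Stb}), $f$ is locally absolutely continuous along $\gamma$; for a maximal subarc $\hat\gamma\subset\X$, whose endpoints lie on $\partial\X$ and are therefore fixed by $f$, the change of variables along $\gamma$ gives
\[
\int_{\hat\gamma}\Abs{f_z-\frac{\varphi}{\abs\varphi}f_{\bar z}}\sqrt{\abs{\varphi(f)}}=\int_{\hat\gamma}\abs{f_{_\mathsf V}}\sqrt{\abs{\varphi(f)}}=\int_{f(\hat\gamma)}\sqrt{\abs\varphi},
\]
while Lemma~\ref{lem:strebel} applied to the locally rectifiable curve $\beta=f(\hat\gamma)$ joining the same two boundary points gives $\int_{f(\hat\gamma)}\sqrt{\abs\varphi}\ge\int_{\hat\gamma}\sqrt{\abs\varphi}$; integrating over trajectories with the Fubini formula~\eqref{eqfub5}--\eqref{eqfub6} then yields the claim.

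The hard part is the boundary bookkeeping. The identity of Lemma~\ref{lemmaidentity} and the change of variables along trajectories are clean only on relatively compact subdomains, so one must carry the uniform and $\W^{1,2}_{\loc}$ convergences $h^j\to h$ and $f^j\to f$ through the exhaustion $\X'\nearrow\X$; and, more delicately, because $h$ may squeeze a continuum of $\X$ into $\partial\Y$, the limit map $f=H^{-1}\circ h$ can send interior points of $\hat\gamma$ to $\partial\X$, so that the curve $\beta=f(\hat\gamma)$ fed into Lemma~\ref{lem:strebel} lives a priori only in $\overline\X$. Handling this is precisely where the Lipschitz regularity of $\Y$, the fiber structure on noncritical trajectories (Proposition~\ref{pro:212}), and partial harmonicity (\S\ref{sec:ph}) come in, exactly as the corresponding points are dealt with in~\S\ref{sec:php} and in the proof of Lemma~\ref{lem:touch}.
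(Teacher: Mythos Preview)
Your outline is in the right spirit but the paper's proof takes a cleaner route that avoids precisely the ``boundary bookkeeping'' you flag in your last paragraph. Rather than rerunning the approximation argument of Lemma~\ref{lem:star} globally, the paper \emph{uses partial harmonicity first}: since $h\big|_{\mathbb G}$, $\mathbb G\bydef h^{-1}(\Y)$, is already a $\mathscr C^\infty$-diffeomorphism onto $\Y$ (by \S\ref{sec:ph}), one can apply Lemma~\ref{lemmaidentity} \emph{directly} to the pair $(h\big|_{\mathbb G},H)$ with $f=H^{-1}\circ h\colon\mathbb G\onto\X$ a genuine diffeomorphism --- no approximation $h^j\to h$, no exhaustion $\X'\nearrow\X$. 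This produces an exact identity for $\E_\X[H]-\E_{\mathbb G}[h]$. Two further ingredients then bridge the gap between $\mathbb G$ and $\X$: first, on $\X\setminus\mathbb G$ one has $J_h=0$ a.e., hence $\abs{\partial_{_\mathsf V}h}=0$ and $\abs{Dh}^2=4\abs{\varphi}$ exactly by \eqref{hHhVJh}, so $\E_{\X\setminus\mathbb G}[h]=4\int_{\X\setminus\mathbb G}\abs{\varphi}$; second, Lemma~\ref{lemtraj} shows $h$ is constant on each vertical-arc component of $\gamma\cap(\overline\X\setminus\mathbb G)$, which guarantees that $f(\gamma\big|_{\mathbb G})$ --- although $\gamma\cap\mathbb G$ may be disconnected --- is a single connected arc in $\X$ with the same endpoints as $\gamma$, so Lemma~\ref{lem:strebel} applies cleanly.

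By contrast, your global approximation produces a limit map $f=H^{-1}\circ h\colon\overline\X\onto\overline\X$ that sends the whole set $\X\setminus\mathbb G$ into $\partial\X$. This creates two real obstacles that are \emph{not} handled ``exactly as in \S\ref{sec:php}'': the factor $\sqrt{\abs{\varphi(f)}}$ in your limiting integrand is undefined there (and $\varphi$ need not even be bounded near $\partial\X$, only $\mathscr L^1$), and the curve $\beta=f(\hat\gamma)$ you feed into Lemma~\ref{lem:strebel} need not lie in $\X$ nor be locally rectifiable, since $H^{-1}$ is not Lipschitz up to $\partial\Y$. To fix either you would end up restricting to $\mathbb G$ and invoking Lemma~\ref{lemtraj} anyway --- at which point the approximation step has bought you nothing and the argument collapses to the paper's.
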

Here, the last equality   follows from~\eqref{eq:exist} and~\eqref{eq:approx}.
\begin{proof}
Let $h \in \mathscr M_g (\overline{\X}, \overline{\Y}) $ be a Hopf-harmonic mapping. Then
\[h_z \overline{h_{\bar z}}  = \varphi \qquad \textnormal{for some holomorphic } \varphi  \not \equiv 0 \, . \]
Let $\mathbb G = h^{-1} (\Y)$. In view of partial harmonicity in Section~\ref{sec:ph}, the mapping $h \colon \overline{\X} \onto \overline{\Y}$ is  a harmonic diffeomorphism from $\mathbb G$ onto $\Y$. Let $H \in \diff_g (\overline{\X} , \overline{\Y})$. Define
\[f = H^{-1} \circ h \colon  \mathbb G \onto \X \, . \]
In view of Lemma~\ref{lemmaidentity}, we see that
\[\begin{split}
\E_\X [H]-\E_\mathbb G [h] &=  4 \int_{\mathbb G} \left[\frac{\abs{f_z-\frac{\varphi }{\abs{\varphi }} f_{\bar z}}^2}{\abs{f_z}^2-\abs{f_{\bar z}}^2} -1 \right]
\, \abs{\varphi }\, \dtext z\\
& + 4 \int_{\mathbb G} \frac{(\,\abs{h_z}\,-\,\abs{h_{\bar z}}\,)^2\cdot \abs{f_{\bar z}}^2
}{\abs{f_z}^2-\abs{f_{\bar z}}^2}\,\dtext z \\
&=  4 \int_{\mathbb G} \left[\frac{  \abs{\partial_{_\mathsf V} f }^2}{J_f} -1 \right]
\, \abs{\varphi }\, \dtext z \\
& + 4 \int_{\mathbb G} \frac{(\,\abs{h_z}\,-\,\abs{h_{\bar z}}\,)^2\cdot \abs{f_{\bar z}}^2
}{J_f}\,\dtext z
\end{split}
\]
Before going further, let us observe that
\[\int_{\X \setminus \mathbb G} \abs{Dh}^2= 4 \int_{\X \setminus \mathbb G} \abs{ \varphi } \, . \]
Indeed, since $J_h \ge 0$ a.e. in $\X$ and $h$ belongs to the Sobolev class $\W^{1,2} (\X, \C)$, it follows that $J_h =0$ a.e. in $\X \setminus \mathbb G = h^{-1}{\partial \Y}$. This is because $h(\X \setminus \mathbb G) \subset \partial \Y$ and  $\partial \Y$ has zero $2$-dimensional measure. Now, by~\eqref{becareful}, it follows that $\abs{\partial_{_\mathsf V} h }^2=0$ a.e. in $\X \setminus \mathbb G$. Therefore,
\[\abs{Dh}^2 = \abs{\partial_{_\mathsf V} h}^2 + \abs{\partial_{_\mathsf H} h }^2 =  \abs{\partial_{_\mathsf H} h }^2 = 4 \abs{\varphi} \qquad \textnormal{a.e. in }  \X \setminus \mathbb G \,  \]
by~\eqref{hHhVJh}.
The above estimates give
\begin{equation}\label{eq:312}\begin{split}
\E_\X [H]-\E_\mathbb X [h] &= 4 \int_{\mathbb G} \frac{  \abs{\partial_{_\mathsf V} f }^2}{J_f} \, \abs{\varphi } \, \dtext z- 4\, \int_\X
\, \abs{\varphi }\, \dtext z \\
& + 4 \int_{\mathbb G} \frac{(\,\abs{h_z}\,-\,\abs{h_{\bar z}}\,)^2\cdot \abs{f_{\bar z}}^2
}{J_f}\,\dtext z  \, .
\end{split}
\end{equation}
Since $f$ is an orientation-preserving mapping we may employ the trivial estimate
\begin{equation}\label{eq:syr412}\int_{\mathbb G} \frac{(\,\abs{h_z}\,-\,\abs{h_{\bar z}}\,)^2\cdot \abs{f_{\bar z}}^2
}{J_f}\,\dtext z \ge 0 \, . \end{equation}
Next, we estimate the first integral on the right hand side of~\eqref{eq:312}. By H\"older's inequality,
\begin{equation}\label{eq:4121} \int_{\mathbb G} \frac{  \abs{\partial_{_\mathsf V} f }^2}{J_f} \, \abs{\varphi } \, \dtext z \ge \frac{\left(\int_\mathbb G \abs{\partial_{_\mathsf V} f } \sqrt{\abs{\varphi}} \, \sqrt{\abs{\varphi \circ f}} \right)^2}{\int_\mathbb G  \abs{\varphi \circ f} J_f}\end{equation}
On the one hand,  changing  variables,  we see that the denominator equals
\begin{equation}\label{eq:4122}\int_\mathbb G  \abs{\varphi \circ f} J_f = \int_\X \abs{\varphi} \, . \end{equation}
Concerning the numerator, we shall make use of Fubini's theorem. First, we change the variables in line integrals over the vertical trajectories.
Namely, for almost every vertical noncritical trajectory $\gamma$ it holds that
\begin{equation}\label{eq:41222}
\int_\gamma  \abs{\partial_{_\mathsf V} f } \sqrt{\abs{\varphi \circ f}} \cdot  \chi_{_{\mathbb G}}= \int_{f (\gamma{_{|_{\mathbb G}}})} \sqrt{ \abs{\varphi} } \end{equation}

 Since $\varphi \in \mathscr L^1(\X)$   the trajectory $\gamma$  has two distinct endpoints $x_1, x_2$ on $\partial \X$, see~\cite{MS}.  By Lemma~\ref{lemtraj},  for almost every vertical trajectory $\gamma$  the mapping $h$ is constant on each component of $\gamma \cap (\overline{\X} \setminus \mathbb G)$. Therefore,    $f (\gamma{_{|_{\mathbb G}}})$ is a connected union of arcs and, as such,  is an arc itself. It has the same endpoints as $\gamma$.

\begin{figure}[h!]
    \centering
    \includegraphics[width=0.9\textwidth]{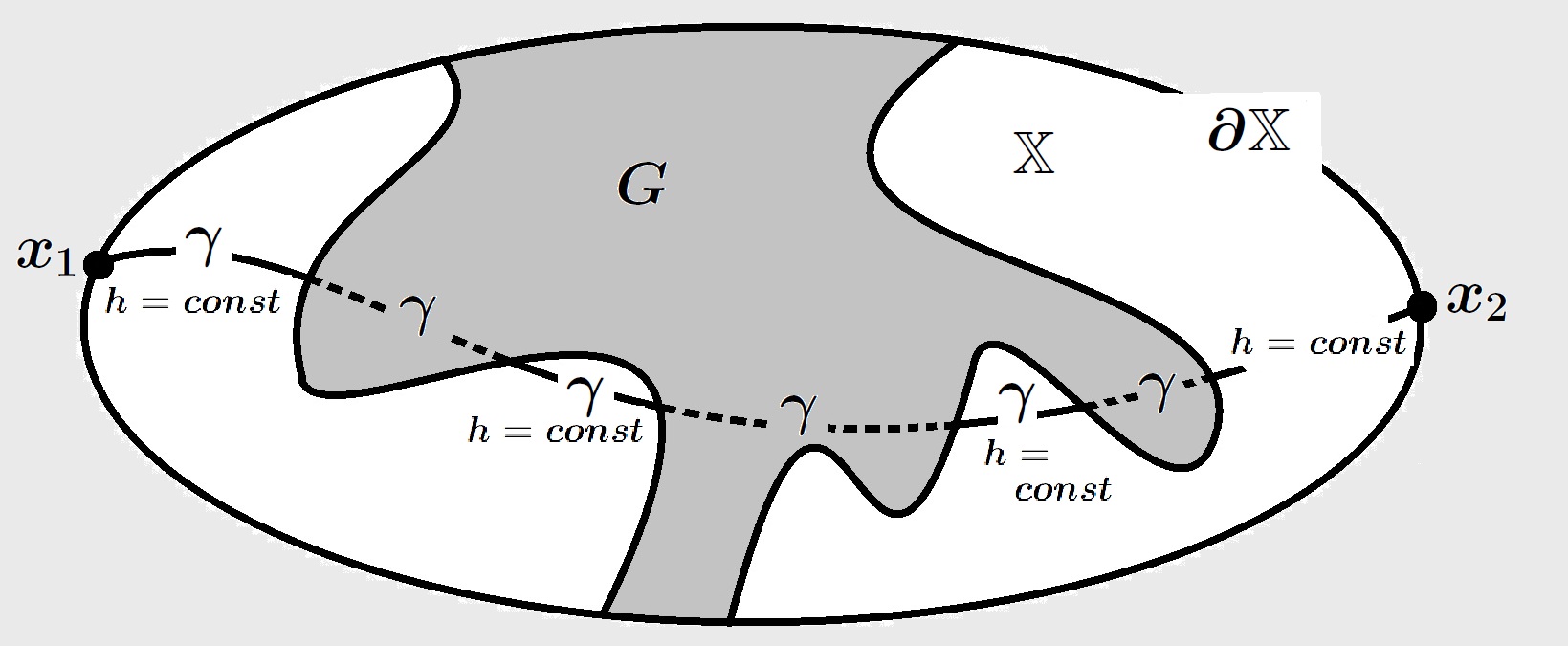}
    \caption{The set  $f(G\cap \gamma)$  is an arc}\label{fig:crosssection}
\end{figure}

 Now,  by~Lemma~\ref{lem:strebel} we have
\[ \int_{f ( \gamma{_{|_{\mathbb G}}})}  \sqrt{ \abs{\varphi} } \, \abs{\dtext z} \ge  \int_{\gamma} \sqrt{ \abs{\varphi} } \, \abs{\dtext z} \]
Therefore,
\[ \int_\gamma  \abs{\partial_{_\mathsf V} f } \sqrt{\abs{\varphi \circ f}} \cdot  \chi_{_\mathbb G} \ge \int_{\gamma} \sqrt{ \abs{\varphi} }   \]
 Fubini's ntegration formula~\eqref{eqfub6} yields
\begin{equation}\label{eq:4123} \int_\mathbb G \abs{\partial_{_\mathsf V} f } \sqrt{\abs{\varphi}} \sqrt{\abs{\varphi \circ f}} \ge \int_\mathbb X \abs{\varphi} \end{equation}
Combining~\eqref{eq:4121} and \eqref{eq:4123}, we obtain
\begin{equation}\label{eq:5121} \int_{\mathbb G} \frac{  \abs{\partial_{_\mathsf V} f }^2}{J_f} \, \abs{\varphi } \, \dtext z \ge \int_\mathbb X \abs{\varphi} \end{equation}
This together with~\eqref{eq:312} and~\eqref{eq:syr412}  gives
\[ E_\X [H]-\E_\mathbb X [h] \ge 0 \, , \]
as claimed.  This also finishes the proof of  Proposition~\ref{pro:412}.
\end{proof}

\subsection{Uniqueness, proof of Theorem~\ref{Thm:main2}} Let $h$ and $H$ be Hopf-harmonic monotone mappings from $\overline{\X}$ onto $\overline{\Y}$   which  coincides with $g$ on $\partial \X$. Therefore,
\[
\begin{split}
h_z \overline{h_{\bar z}} & = \varphi \qquad \textnormal{for some holomorphic } \varphi \\
H_z \overline{H_{\bar z}} & = \psi \qquad \textnormal{for some holomorphic } \psi
\end{split}
\]
We may assume that $\varphi \not\equiv 0 \not\equiv \psi$. By Proposition~\ref{pro:412} both mappings $h$ and $H$  minimize the Dirichlet energy subject to Sobolev monotone mapping in $\mathscr M_g (\overline{\X}, \overline{\Y})$. Let us consider the subdomains of $\X$,   $\mathbb G \bydef h^{-1}(\Y)$ and $\mathbb G_H \bydef H^{-1}(\Y)$. These are simply connected domains. In view of the partial harmonicity in~\eqref{eq:partialharm}, the mappings $h \colon \mathbb G \onto \Y$ and $H \colon \mathbb G_H \onto \Y $ are harmonic diffeomorphisms. Thus $f = H^{-1} \circ h \colon  \mathbb G \onto \mathbb G_H$ is an orientation preserving diffeomorphism. We denote the inverse of $f$ by $g=f^{-1} = h^{-1} \circ H \colon \mathbb G_H \onto \mathbb G$.
Fix a disk $\mathbb D \Subset \mathbb G_H$. There exists a sequence of diffeomorphisms $H_k \in \diff_g (\overline{\X}, \overline{\Y})$ converging to $H$ uniformly and in $\W^{1,2} (\X, \C)$. In analogy to $f$ and $g$ we define
\[ f^k \bydef H_k^{-1} \circ h  \colon \mathbb G \onto \X \quad \textnormal{ and } \quad  g^k \bydef h^{-1} \circ  H_k \colon \X \onto \mathbb G
\]
Since $H_k \colon \overline{\mathbb D} \to \Y$  converge uniformly to $H \colon \mathbb D \to H(\overline{\mathbb D})$, where $H(\overline{\mathbb D})$
is a compact subset of  $\mathbb Y$, there is a neighborhood $\mathbb V$ of $H(\overline{\mathbb D})$, compactly contained in $\Y$,  such that  $H_k(\overline{\mathbb D}) \subset \mathbb V$ for all sufficiently large $k$, say for $k \ge k_\circ$. Since $\overline{\mathbb V}$ is compact in $\Y$ the set $\mathbb F \bydef h^{-1} (\overline{\mathbb V})$ is compact in $\mathbb G$. Then we note that
\[g^k (\overline{\mathbb D}) =h^{-1} \big(H_k (\overline{\mathbb D})\big) \subset h^{-1} (\overline{\mathbb V}) = \mathbb F \, .\]
Furthermore, $g^k$ converges uniformly to $g= h^{-1} \circ H \colon \overline{\mathbb D } \to h^{-1} \big( H(\overline{\mathbb D }) \big)$. In view of~\eqref{eq:312}, it follows that
\begin{equation}\begin{split}
\E_\X [H_k]-\E_\mathbb X [h] &= 4 \int_{\mathbb G} \frac{  \abs{\partial_{_\mathsf V} f^k }^2}{J_{f^k}} \, \abs{\varphi } \, \dtext z- 4\, \int_\X
\, \abs{\varphi }\, \dtext z \\
& + 4 \int_{\mathbb G} \frac{(\,\abs{h_z}\,-\,\abs{h_{\bar z}}\,)^2\cdot \abs{f^k_{\bar z}}^2
}{J_{f^k}}\,\dtext z  \, .
\end{split}
\end{equation}
Applying~\eqref{eq:5121} with $f^k$ in place of $f$
\[\begin{split} \E_\X [H_k]-\E_\mathbb X [h]  & \ge  4 \int_{\mathbb G} \frac{(\,\abs{h_z}\,-\,\abs{h_{\bar z}}\,)^2\cdot \abs{f^k_{\bar z}}^2
}{J_{f^k}}\,\dtext z \\
& \ge  4 \int_{\mathbb F} \frac{(\,\abs{h_z}\,-\,\abs{h_{\bar z}}\,)^2\cdot \abs{f^k_{\bar z}}^2
}{J_{f^k}}\,\dtext z   \, .  \end{split} \]
Since $h $ is an orientation-preserving diffeomorphism on $\mathbb G$, we have $\abs{h_z}\,-\,\abs{h_{\bar z}} \ge c >0$ for every $z \in \mathbb F \Subset \mathbb G$ and a constant $c=c(\mathbb F)>0$
\[ \begin{split}\E_\X [H_k]-\E_\mathbb X [h]   &\ge 4 c^2 \int_{\mathbb F} \frac{ \abs{f^k_{\bar z}}^2}{J_{f^k}}\,\dtext z =   4 c^2 \int_{f^k(\mathbb F )}  \abs{g^k_{\bar w} (w)}^2 \, \dtext w \\ & \ge 4 c^2 \int_{\mathbb D}  \abs{g^k_{\bar w} (w)}^2 \, \dtext w
 \end{split} \]
Here we have made the substitution $z=g^k(w)$.

Letting $k \to \infty$ we find that $g^k_{\bar w} \to 0$ in $\mathscr L^2 (\mathbb D)$. Since $g^k \to g$ uniformly
on $\mathbb D$, we see that $g_{\bar w}=0$ on $\mathbb D$. But  $\mathbb D \Subset \mathbb G_H$ was arbitrary, so $g \colon \mathbb G_H \onto \mathbb G$ and  $f=g^{-1} \colon \mathbb G \onto \mathbb G_H$ are conformal.

Next, using Lemma~\ref{lem:touch}, we are going to show that $f(z)=z$. Here,  the  assumption that the part of $\partial \Y$ is convex is employed.   By~Lemma~\ref{lem:touch} we obtain that $\overline{h^{-1} (\Y)}$ contains an open arc, $\Gamma \subset \partial \X$.  Now, the conformal map $f  \colon \mathbb G \onto \mathbb G_H$ extends continuously to $\Gamma$. Since  $h(z) = H(z) $ on the boundary of  $\X$, we have  that $f(z)=z$ on $\Gamma$.  Finally, we appeal to a general fact that two holomorphic functions in $\mathbb G$, continuous on $\overline{ \mathbb G }$, are the same if they coincide on an arc of $\partial \mathbb G$. Therefore, $f(z)=z$ in $\mathbb G$, which means that $h (z) = H(z)$ for all $z\in \mathbb G$. Now, the holomorphic functions $\varphi = h_z \overline{h_{\bar z}} $ and  $ \psi = H_z \overline{H_{\bar z}}$ coincide in $\mathbb G$ and so
\[\varphi (z)  =  \psi (z) \qquad \textnormal{ for all } z \in \X \, .  \]
What remains is to argue that $h=H$ in $\X \setminus \mathbb G$. Note that $h$ and $H$ have the same vertical trajectories (because $\varphi \equiv \psi$). By Lemma~\ref{lemtraj} they are constant on every connected component (arc) of every every vertical trajectory. Since $h$ and $H$ coincide on the endpoints of these arcs, we conclude that $h \equiv H$ on $\X \setminus \mathbb G$, which completes the proof.
\subsection{Proof of Theorem~\ref{thm2}} Since every $\mathscr C^2$-regular domain $\Y$ is a somewhere convex Lipschitz domain Theorem~\ref{thm2} follows from Theorem~\ref{thm:main} and Theorem~\ref{Thm:main2}.

\section{Examples}
We will now  demonstrate, by way of illustration, of how the above results work for monotone Hopf harmonics $\,h : \mathbb X \onto \mathbb Y\,$ between domains with certain symmetries. In our first example the target $\Y \subset \C$ has the butterfly shape, with exactly one non-convex boundary point, see Figure~\ref{ButterflyTarget}.

\begin{example}\label{ex:butterfly}
We use the polar coordinates for $z$ in the closed unit disk $\overline{\mathbb D}$, $z= \rho e^{i \theta}$, $0 \le \rho \le 1$ and $0 \le \theta < 2 \pi$. Define $h \colon \overline{\mathbb D} \to \C$ by the rule:
\[ \begin{split}
h(\rho e^{i \theta})&= 2 \rho \left[\sqrt{\rho} \sin (\nicefrac{3}{2}\, \theta) + i \sin \theta  \right]= z- \bar z - i \left[ z^{\nicefrac{3}{2}}- \bar z^{\nicefrac{3}{2}} \right].
\end{split}
\]
\end{example}

This mapping is Lipschitz continuous with
\begin{equation}\label{exzbarz}
h_z = 1-\nicefrac{3}{2}\, i  \sqrt{z}, \qquad h_{\bar z} = -1+\nicefrac{3}{2}\, i  \sqrt{\bar z}.
\end{equation}
Moreover, its Hopf differential is holomorphic
\begin{equation}
h_z \overline{h_{\bar z}} = - \nicefrac{1}{4} \,  \left(4+  9z \right).
\end{equation}
Thus $h$ solves the Hopf-Laplace equation $\frac{\partial}{\partial \bar z} \left(h_z \overline{h_{\bar z}}\right)=0$. Concerning topological behavior, the  ray $\mathbb I = \{z \colon \im z =0 \mbox{ and } 0 \le \re z \le 1\}$  is squeezed  into  the origin, which is a boundary point of $\Y$. Outside of the ray, the mapping  $h$ is homeomorphism and it takes $\mathbb D \setminus \mathbb I$ as a harmonic diffeomorphism  onto the  domain $\Y$ , see Figure~\ref{ButterflyTarget}.

\begin{figure}[h!]
    \centering
    \includegraphics[width=0.99\textwidth]{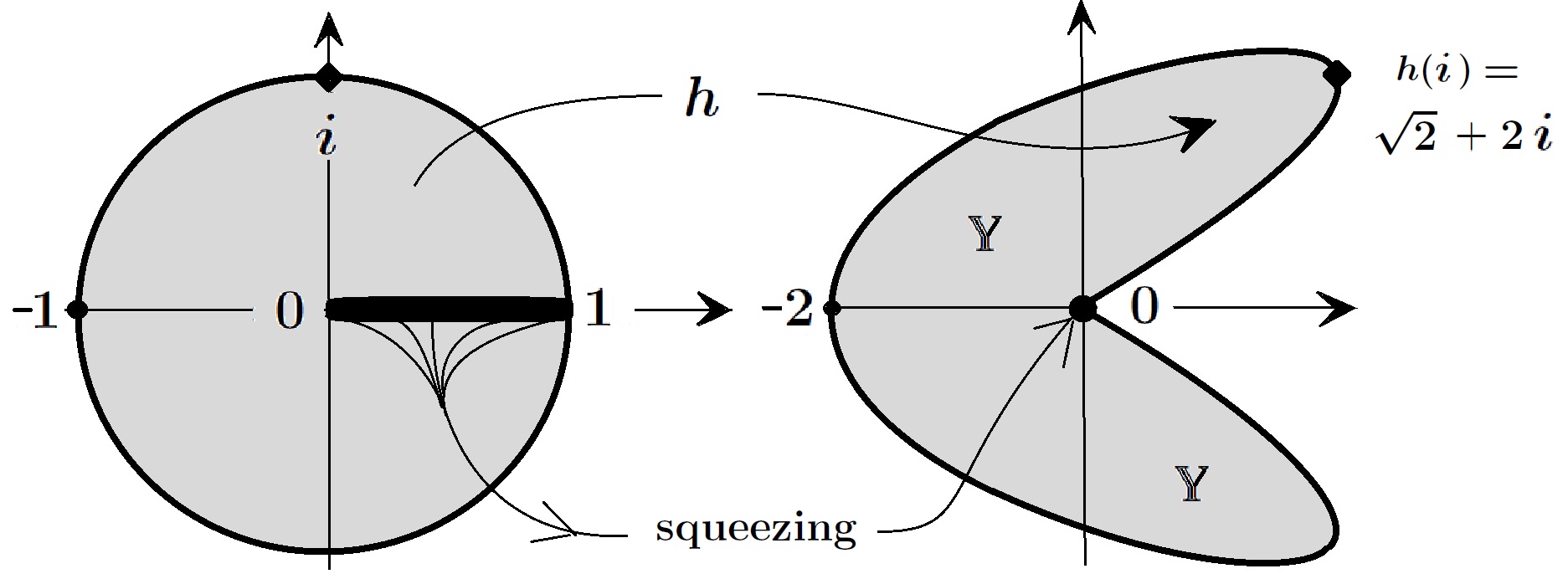}
    \caption{A horizontal segment is squeezed into a point where $\,\mathbb Y\,$ fails to be convex}\label{ButterflyTarget}
\end{figure}

\begin{example}\label{HammeringRectangle}
In our second example the target $\Y \subset \C$ is a semi-annulus in which the inner semi-circular boundary arc consists of non-convex points. Consider a horizontal strip
$$\,\mathcal S = \mathbb R \times \left[-\frac{\pi}{2} ,  \frac{\pi}{2} \right]  \; = \mathcal S_- \cup \mathcal S_+\;,\;\;
\textnormal{where} \;$$
$$\mathcal S_-  \bydef \left( -\infty , 0 \,\right ] \times \left[-\frac{\pi}{2} ,  \frac{\pi}{2} \right] \; \;\textnormal{and}\;\;\mathcal S_+ \bydef \left[\, 0 ,  +\infty \,\right ) \times \left[-\frac{\pi}{2} ,  \frac{\pi}{2} \right] $$

We define a mapping $\, h = u + i\, v  : \mathcal S \rightarrow \mathbb C\,$ by the rule

\[
h(x,y) = \begin{cases}
e^{i y} \cosh\, x \qquad &\textnormal{\;if } \; 0 \leqslant x <  +\infty \\
e^{i y} \qquad & \textnormal{\;if}\;  -\infty < x \leqslant 0
 \end{cases}
\]
It is straightforward to verify that $\,h\,$ is  a $\,\mathscr C^{1,1}\,$-smooth monotone Hopf harmonic, but not $\mathscr C^2\,$ -smooth. In fact, we have $\,h_z\, \overline{h_{\overline{z}}} \,\equiv -\frac{1}{4}\,$ in the entire strip. This map takes the vertical cross sections of $\,\mathcal S_+\,$ onto concentric semicircles $\,\mathcal C_\rho \bydef \{ (u,v) : \, u^2  + v^ 2 =  \rho^2 \;,\;  u \geqslant 0\,\}\,$\,, $\,1 \leqslant \rho < \infty\,$, see Figure~\ref{MapOfRectangle}. On the other hand, in $\,\mathcal S_-\,$ each  half line  $\,\{(x,y) ;\; -\infty < x \leqslant 0\,\}$, parametrized  by  $\,y \in \left[-\frac{\pi}{2},  \frac{\pi}{2} \right]\, $,  is squeezed into a point $\,e^{iy} \,\in \mathcal C_1\,$.
Now consider a rectangular box $\, \mathbb X =  \mathbb X_- \cup \mathbb X_+ \,$, where $$ \,\mathbb X_- \bydef  \left\{ (x,y) \colon \, -\ell < x < 0\;, \; -\frac{\pi}{2} < y < \frac{\pi}{2}\, \right\}\,$$  $$\,\mathbb X_+ \bydef \left\{ (x,y) \colon  \, 0 \leqslant x < T\;, \; -\frac{\pi}{2} < y < \frac{\pi}{2}\, \right\}\; $$
\end{example}
Our monotone Hopf harmonic map  $\,h\,$ takes $\,\mathbb X\,$ onto a semi-annulus $\, \mathbb Y \bydef \{ (u,v) \,;\, 1 < \sqrt{u^2 + v^2} < \cosh   T\;,\;  u > 0\,\}\,$, so $\,h^{-1}(\mathbb Y) = \mathbb X_+\,$.

\begin{figure}[h!]
    \centering
    \includegraphics[width=0.99\textwidth]{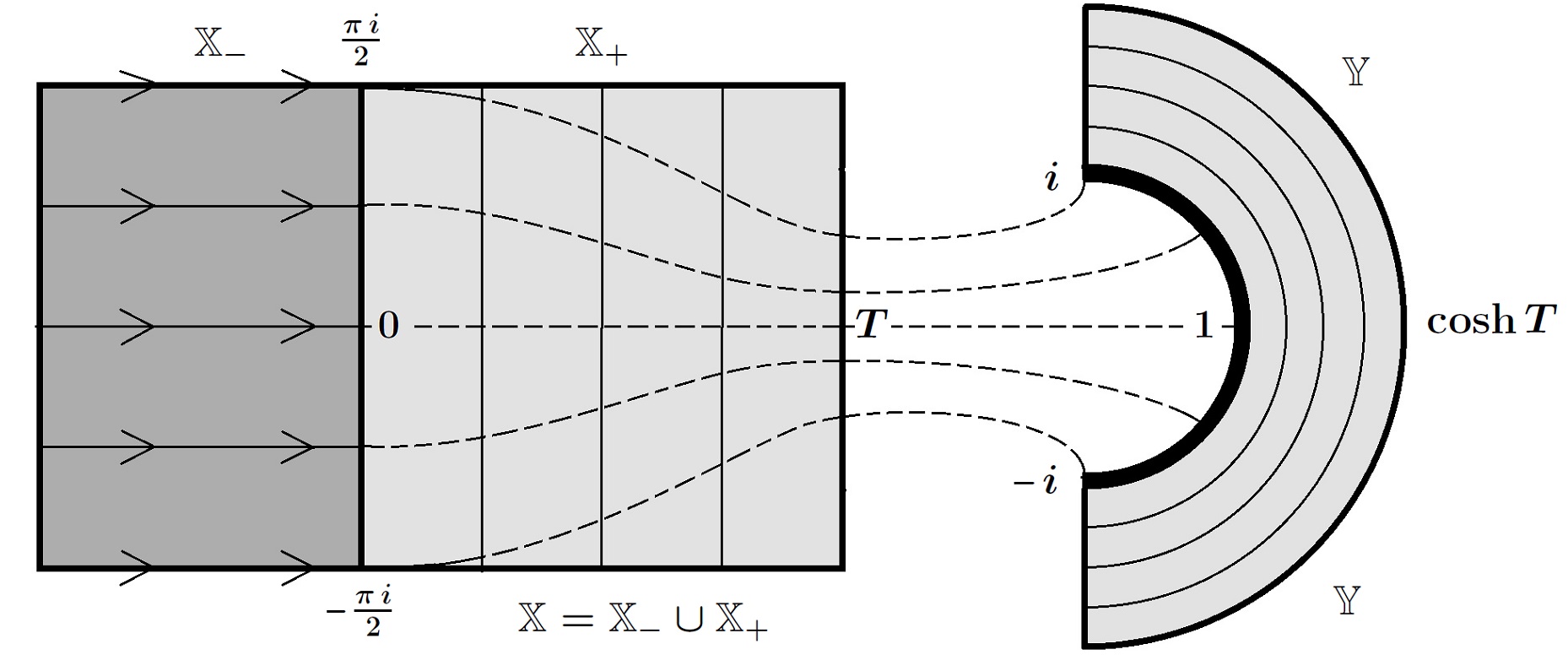}
    \caption{Points in $\mathbb X_-\,$ are projected onto the common boundary $\,\partial \mathbb X_- \cap \partial \mathbb X_+\,$,  and then transformed into the semicircular boundary arc of $\, \mathbb Y\,$, exactly where it fails to be convex.}\label{MapOfRectangle}
\end{figure}

\section{4-leaf clovers}

In our third example the target $\Y$ has a 4-leaf clovers shape.
\subsection{Circular and Elliptical Clovers} The reference configuration $\,\mathbb X \subset \mathbb C \simeq \mathbb R^2 \,$ will be  a union of four disks of radius 1 centered at the points $\,1, i, -1, -i\, $. Call $\,\mathbb X\,$ a \textit{circular 4-leaf clover}. Thus the boundary of $\,\mathbb X\,$ consists of four semicircular arcs, which we write as $\,\partial \mathbb X = \mathbf \Gamma_2\,\cup\, \mathbf \Gamma_{2i}\,\cup\, \mathbf \Gamma_{-2}\,\cup\, \mathbf \Gamma_{-2i}\,$. Each complex subscript here designates middle point of the arc, see Figure \ref{CircClover}.

\begin{figure}[h!]
    \centering
    \includegraphics[width=1.0\textwidth]{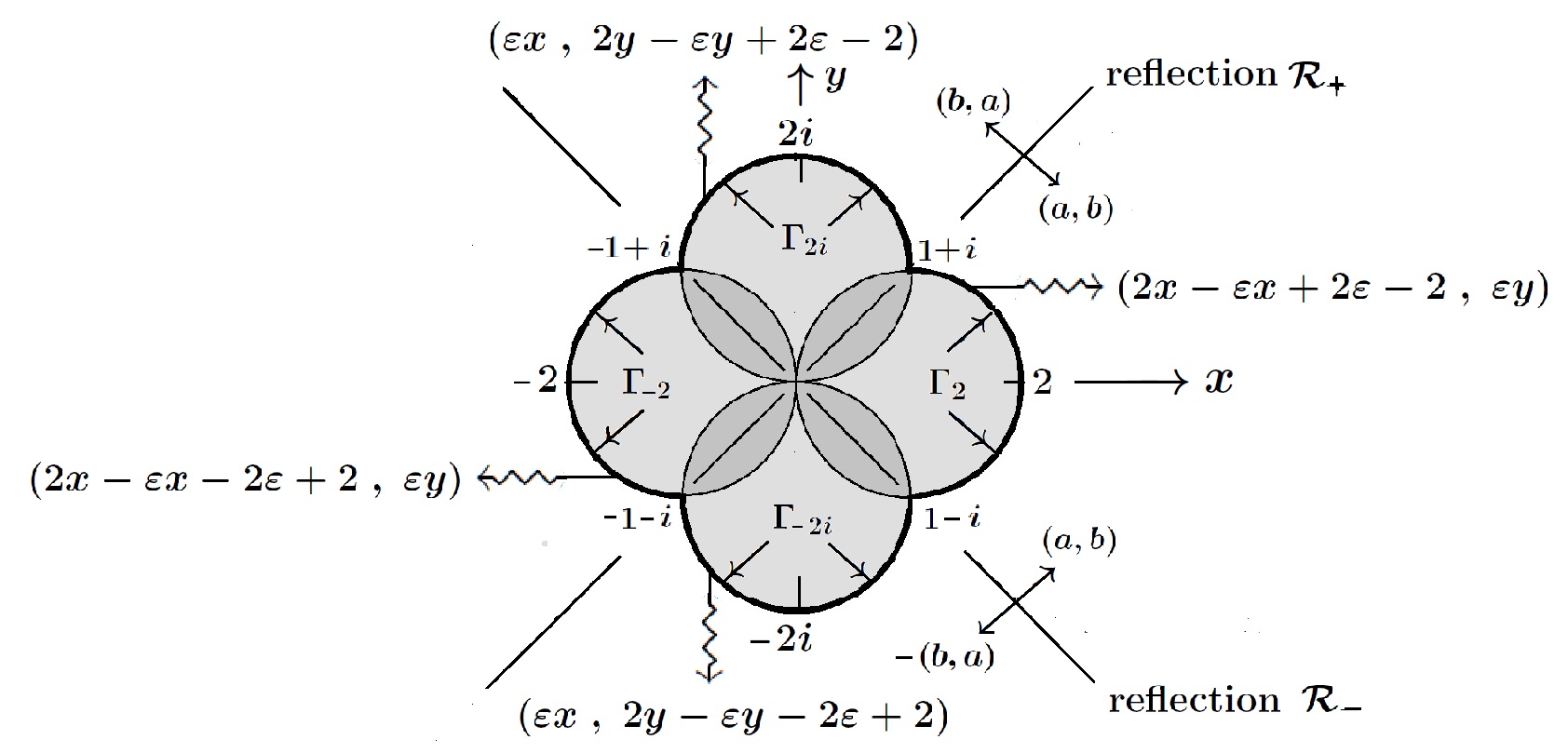}
    \caption{4-leaf circular clover and a piece-wise affine boundary data}\label{CircClover}
\end{figure}

The target domain $\,\mathbb Y \subset \mathbb C \simeq \mathbb R^2\,$ is a union of four ellipses obtained from the disks via affine transformations. We shall call it \textit{elliptical 4-leaf clover}, see Figure \ref{BoundaryMap}.

\begin{figure}[h!]
    \centering
    \includegraphics[width=1.0\textwidth]{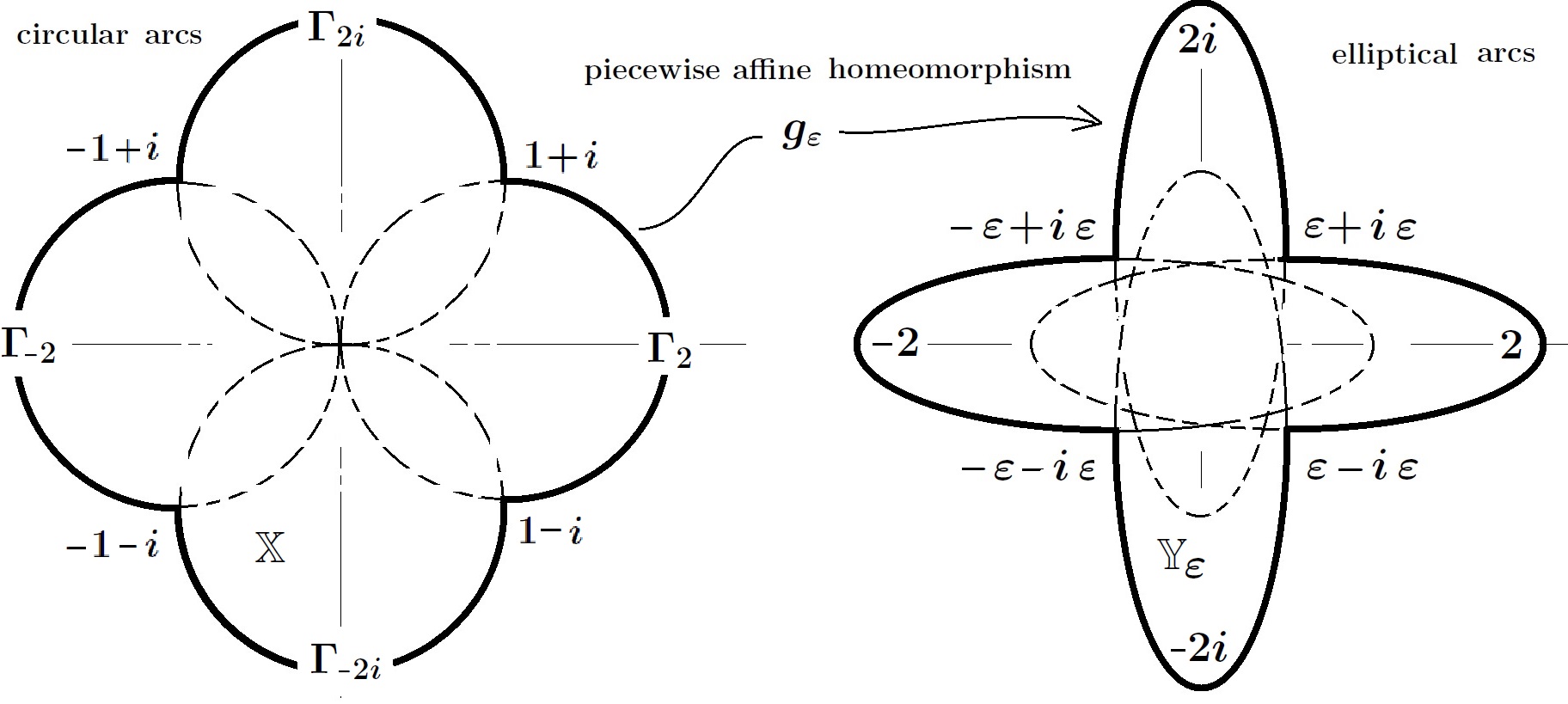}
    \caption{The boundary of the elliptical clover}\label{BoundaryMap}
\end{figure}

 The boundary of $\,\mathbb Y\,$ consists of four elliptical arcs, $\,\partial \mathbb Y \bydef g(\partial \mathbb X ) \bydef g(\mathbf \Gamma_2)\,\cup\, g(\mathbf \Gamma_{2i})\,\cup\, g(\mathbf \Gamma_{-2})\,\cup\, g(\mathbf \Gamma_{-2i})\,$, where $\,g : \partial \mathbb X  \onto \partial \mathbb Y\,$ is a piecewise affine map defined by the rule:

\[g(x,y) = g_\varepsilon(x,y) = \begin{cases}
( 2x - \varepsilon x + 2 \varepsilon  - 2\;, \; \varepsilon y  )\;, \;\textnormal{for} \; (x,y) \in \mathbf \Gamma_2 \;\\
( \varepsilon x\;,\; 2y - \varepsilon y + 2 \varepsilon  - 2 )\;, \;\textnormal{for} \; (x,y) \in \mathbf \Gamma_{2i} \;\\
(2x - \varepsilon x - 2 \varepsilon  + 2\;, \; \varepsilon y  )\;, \;\textnormal{for} \; (x,y) \in \mathbf \Gamma_{-2} \;\\
(\varepsilon x\;,\; 2y - \varepsilon y - 2 \varepsilon  + 2 )\;, \;\textnormal{for} \; (x,y) \in \mathbf \Gamma_{-2i} \;
\end{cases}\]
 Here $\,0 \leqslant \varepsilon \leqslant 1\,$ is a parameter to be chosen and fixed later on.  For now, the elliptical 4-leaf clover actually depends on $\,\varepsilon\,$, which we indicate by writing $\,\mathbb Y = \mathbb Y_\varepsilon\,$ when clarity requires it.

 \subsection{Harmonic Extension $\,G = G_\varepsilon\,$ }

 Except for $\,\varepsilon = 0\,$, the boundary map $\,g : \partial \mathbb X \onto \partial \mathbb Y\,$ is a homeomorphism.
 We see that $\,g_1(x,y) = (x,y)\,$, so  $\,\mathbb Y\, = \mathbb X\,$.  In this case the harmonic extension of $\,g_1\,$ is the identity on $\,\mathbb X\,$ as well. As one may have expected, when $\,\varepsilon\,$  drops below 1, but not too far (say $\, \varepsilon \in [ \varepsilon_\sharp ,   1 ]\,$ for some $\,0 < \varepsilon_\sharp \leqslant 1$),  the harmonic extension, denoted by $\,G = G_\varepsilon : \overline{\mathbb X} \into \mathbb R^2\,$ of the boundary data $\,g_\varepsilon :\partial \mathbb X  \onto \partial \mathbb Y_\varepsilon\,$ remains a diffeomorphism of $\,\mathbb X \onto \mathbb Y_\varepsilon\,$, see Figure \ref{CCL}.
 \begin{figure}[h!]
    \centering
    \includegraphics[width=1.0\textwidth]{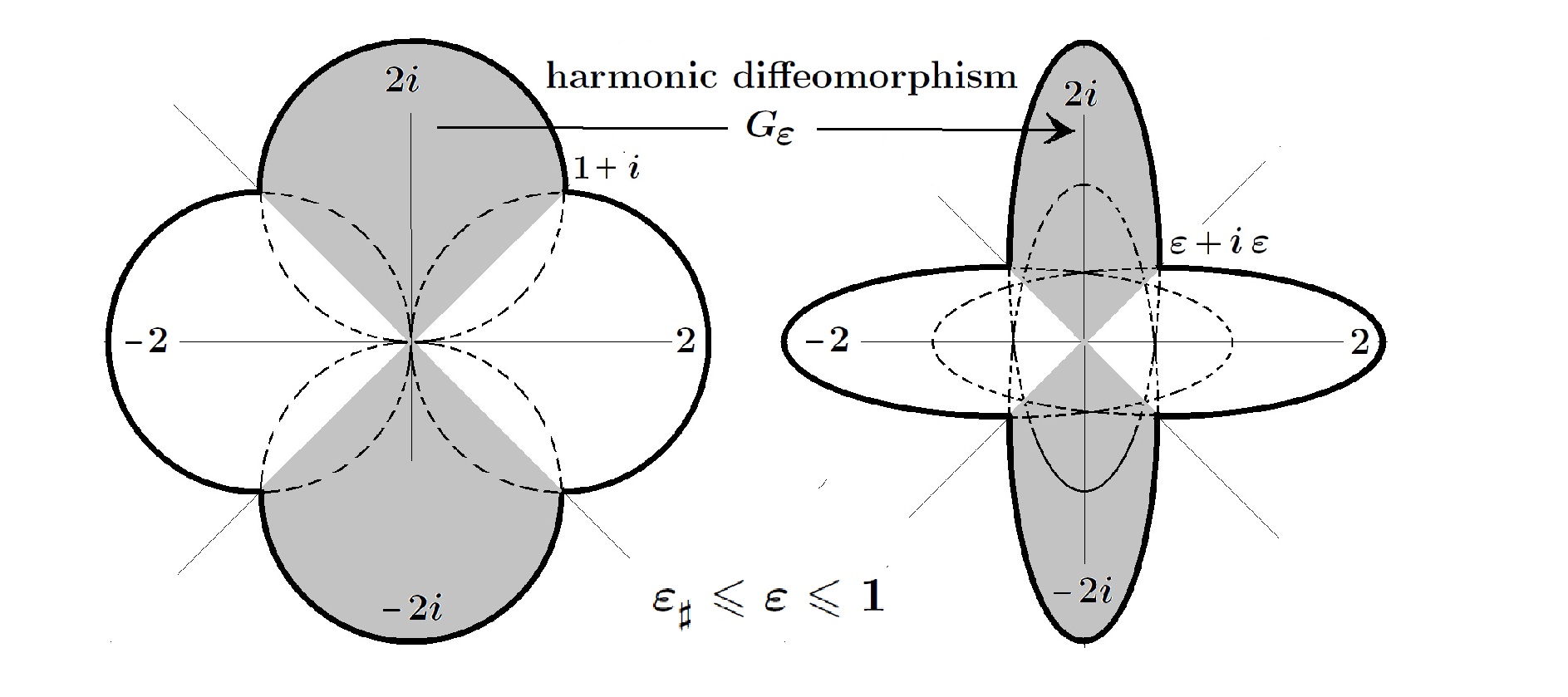}
    \caption{Circular clover and its diffeomorphic image by the harmonic extension of the boundary data}\label{CCL}
\end{figure}

\subsection{The Limit Case}

Let us take a quick look at the limit of  harmonic extensions as $\,\varepsilon \searrow 0\,$.

 In case  $\,\varepsilon = 0\,$ the 4-leaf clover degenerates to a cross of coordinate segments, see Figure \ref{LP}.

\[g_{_0}(x,y) =  \begin{cases}
( 2x   - 2\;, \; 0  )\;, \;\textnormal{for} \; (x,y) \in \mathbf \Gamma_2 \;\\
(0\;,\; 2y  - 2 )\;, \;\textnormal{for} \; (x,y) \in \mathbf \Gamma_{2i} \;\\
(2x   + 2\;, \; 0  )\;, \;\textnormal{for} \; (x,y) \in \mathbf \Gamma_{-2} \;\\
(0\;,\; 2y + 2 )\;, \;\textnormal{for} \; (x,y) \in \mathbf \Gamma_{-2i} \;
\end{cases}\]

\begin{figure}[h!]
    \centering
    \includegraphics[width=1.0\textwidth]{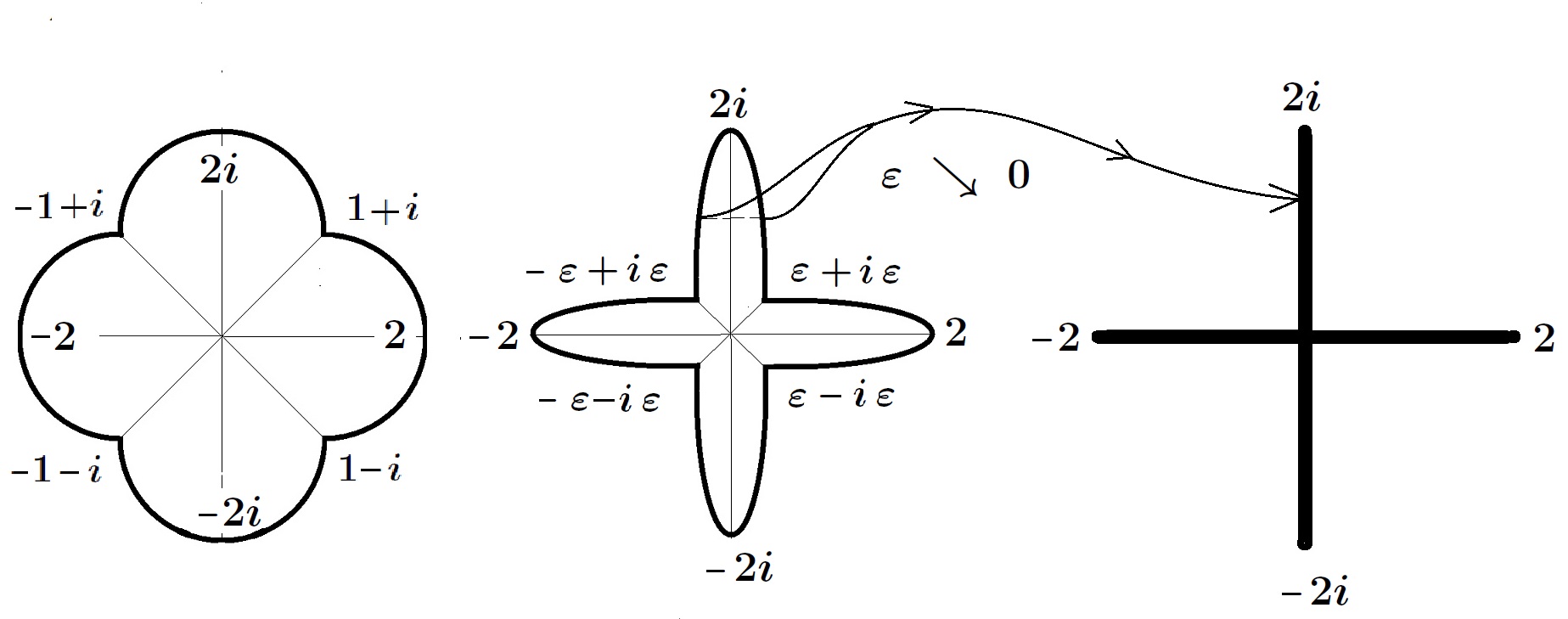}
    \caption{The uniform limit of the images of the boundary homeomorphisms degenerates to a cross of straight segments.}\label{LP}
\end{figure}

 We  always have the inclusion  $\, G_\varepsilon(\overline{\mathbb X})\, \supseteq \overline{\mathbb Y_\varepsilon}\,$; just because of continuity of $\, G_\varepsilon\,$. However, if $\,\varepsilon $ is small enough, we have even strict inclusion $\, G_\varepsilon(\overline{\mathbb X})\, \varsupsetneq \overline{\mathbb Y_\varepsilon}\,$. Indeed, suppose that, on the contrary, there is a sequence $\, \varepsilon_n \searrow 0\,$ for which  $\, G_{\varepsilon_n}(\overline{\mathbb X})\, \subset \overline{\mathbb Y_{\varepsilon_n}}\,$. The boundary homeomorphisms $\; g_{\varepsilon_n}  : \,\partial \mathbb X  \onto \partial\mathbb Y_{\varepsilon_n}\,$ converge uniformly to $\; g_{_0}  : \,\partial\mathbb X  \onto \partial \mathbb Y_0\,$. By the maximum/minimum principle it follows that $\, G_{\varepsilon_n} : \overline{\mathbb X} \onto \overline{\mathbb Y_{\varepsilon}}\,$  converge uniformly to a harmonic map $\,G_0 \bydef u + i v\,$ whose image $\,G_0(\overline{\mathbb X}) \,$ degenerates to a cross of straight line segments, see Figure \ref{LP}. Thus $\,u\cdot v \equiv 0\,$ on $\,\overline{\mathbb X}\,$. This is possible only when $\,u  \equiv 0\,$  or $\,v \equiv 0\,$, by the unique continuation property of harmonic functions, which is a contradiction.

\subsection{Critical Parameter $\,\varepsilon_\sharp\;$ }  We just have shown that there is so-called critical parameter $\,0 < \varepsilon_\sharp \leqslant 1\,$ such that:
whenever  $\,\varepsilon\,$ drops below $\,\varepsilon_\sharp\,$, the harmonic extension $\,G_\varepsilon :\overline{\mathbb X} \rightarrow \mathbb R^2\,$ of the boundary homeomorphism $\,g_\varepsilon : \partial \mathbb X \onto \partial \mathbb Y_\varepsilon\,$ takes part of $\,\mathbb X\,$ outside $\,\overline{\mathbb Y_\sharp}\,$,  as in Figure \ref{HFold}. Overlapping becomes inevitable.

\begin{figure}[h!]
    \centering
    \includegraphics[width=1.0\textwidth]{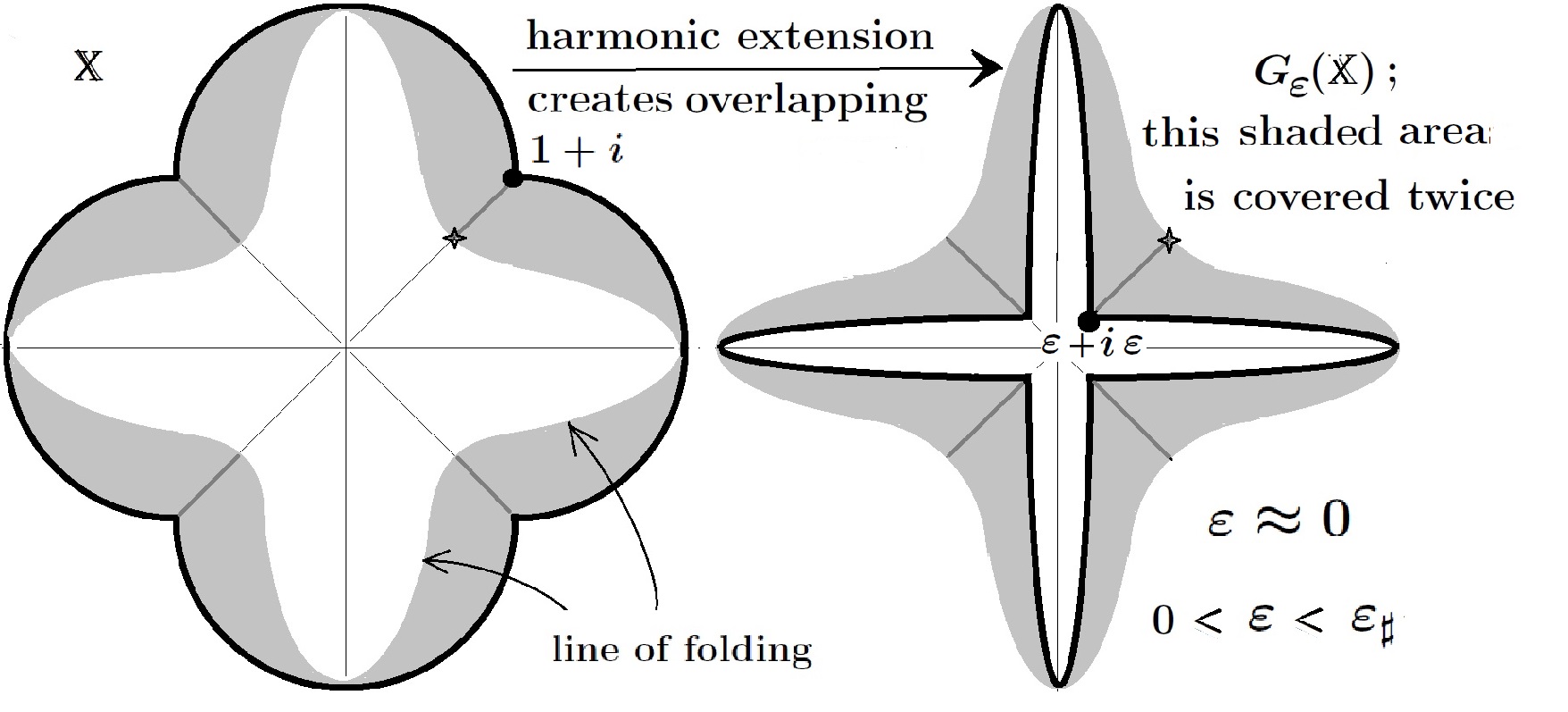}
    \caption{This hand made sketch may not be accurate regarding the actual lines of folding. }\label{HFold}
\end{figure}

In the mathematical models of Nonlinear Elasticity the overlapping is ruled out by the principle of non interpenetration of matter.  We just find ourselves forced to place topological restrictions on the mappings in question for minimizing the Dirichlet energy. Monotone Hopf harmonics turn out to be right solution; for, no overlapping may occur. As we shall illustrate in this example, monotone energy-minimal deformations will squeeze certain line fragments of $\,\mathbb X\,$ (emanating from $\, \partial \mathbb X\,$)  into non convex points of $\,\partial \mathbb Y\,$. Nevertheless Hopf harmonics, being limits of Sobolev homeomorphisms,  should take legitimate place in NE.
\subsection{Below the Critical Parameter}

\begin{center} \textit{This is the case $\,0 < \varepsilon < \varepsilon _\sharp\,$ when harmonic extensions fail.} \end{center}

From now on, we choose and fix a parameter $\,0 < \varepsilon < \varepsilon _\sharp\,$, so the harmonic extension $\,G_\varepsilon  : \mathbb X \into \mathbb R^2\,$ is ruled out by models of NE.
\subsection{Monotone Hopf Harmonic map $\,H = H_\varepsilon$}

Advantageously,  Theorem \ref{thm:main}, provides us with a unique monotone Hopf-harmonic map, denoted by $\, H = H_\varepsilon : \overline{\mathbb X} \onto \overline{\mathbb Y} =  \overline{\mathbb Y_\varepsilon}\,$,  of class $\,\mathscr C(\overline{\mathbb X}, \overline{\mathbb Y_\varepsilon})  \cap \mathscr W^{1,2}(\mathbb X, \mathbb Y_\varepsilon )\,$, which agrees with $\,g = g_\varepsilon\,$ on $\,\partial \mathbb X\,$. Furthermore, $\, H\,$ is a harmonic diffeomorphism from $\,H^{-1}(\mathbb Y)\,$ onto $\,\mathbb Y\,$.  Actually, among all monotone Sobolev mappings with prescribed boundary data $\,g : \partial \mathbb X \onto \partial \mathbb Y = \partial \mathbb Y_\varepsilon\,$, the map  $\,H\,$ is a unique one with smallest Dirichlet energy, see Proposition \ref{pro:412}. Our choice of 4-leaf clovers  comes from the fact that the symmetries of $\,\mathbb X\,$ and $\,\mathbb Y\,$ about the coordinate axes $\, y = 0 \,,\, x = 0\,$ and the diagonal lines  $\, y = x\,,\, y = -x\,$ will help us to locate the squeezing fragments of $\,\mathbb X\,$.\\

We start with the observation that the boundary data is also symmetric about these lines; in symbols,

\begin{equation}\begin{split}
 & g \circ \mathcal T_\pm = \mathcal T_\pm \circ g \;, \; \;\textnormal{where}\;\; \mathcal T_\pm (a, b) \bydef  \pm (a, -b)\,\;\;\textnormal{(respectively)}  \\
& g \circ \mathcal R_\pm = \mathcal R_\pm \circ g \;, \; \;\textnormal{where}\;\; \mathcal R_\pm (a, b) \bydef  \pm (b, a)\,\;\;\textnormal{(respectively)}
  \end{split}
\end{equation}
  The above commutation rules can easily be verified; make use of the explicit formulas conveniently provided in Figure \ref{CircClover} for this purpose.
  Using complex variable $\, z = x + i y\,$, the reflections $\,\mathcal T_\pm : \mathbb C \onto \mathbb C\,$ and $\, \mathcal R_\pm : \mathbb C \onto \mathbb C\,$ read as: $\,\mathcal T_\pm(z) = \pm \overline{z}\,$ and $\,\mathcal R_\pm(z) =  \pm i \overline{z}\,$. In particular, the boundary data is also invariant under rotation by right angle; namely, $\,(\mathcal T_\pm \circ \mathcal R_\pm) (z)  =  i z\,$.   The observed symmetries carry over to  the Hopf harmonic map $\,H: \overline{\mathbb X} \onto \overline{\mathbb Y}\,$ as well; precisely,

  \begin{equation}\label{SymmetryOfH} \begin{split}
  &H \circ \mathcal T_\pm = \mathcal T_\pm \circ H \;:\;  \overline{\mathbb X} \onto \overline{\mathbb Y} \;\;\textnormal{(respectively)}\\
&H \circ \mathcal R_\pm = \mathcal R_\pm \circ H \;:\;  \overline{\mathbb X} \onto \overline{\mathbb Y} \;\;\textnormal{(respectively)}
\end{split}
\end{equation}

To see this examine, in addition to   $\, H :\overline{\mathbb X} \onto \overline{\mathbb Y}\,$, four  monotone mappings;
\begin{equation}\label{eq:53} \begin{split}
&\,\mathbf T^\pm \bydef \mathcal T_\pm \circ H \circ \mathcal T_\pm\,:\,  \overline{\mathbb X} \onto \overline{\mathbb Y}\,\\
 &\,\mathbf R^\pm \bydef \mathcal R_\pm \circ H \circ \mathcal R_\pm\,:\,  \overline{\mathbb X} \onto \overline{\mathbb Y}\,
 \end{split}
 \end{equation}
  They all share the same  boundary data $\,g : \partial \mathbb X \onto \partial \mathbb Y = \partial \mathbb Y_\varepsilon\,$. Let their Hopf products be denoted by:
\begin{equation}\begin{split}
&\zeta(z) = H_z(z) \cdot \overline{H_{\overline{z}}(z)}\; \;,\;\; \textnormal{for}\;\; z \in \mathbb X\\
&\;\psi^\pm(z) = \mathbf T^\pm_z(z) \cdot \overline{\mathbf T^\pm_{\overline{z}}(z)} \;\\
&\;\; \phi^\pm(z) = \mathbf R^\pm_z(z) \cdot \overline{\mathbf R^\pm_{\overline{z}}(z)}
\end{split}
\end{equation}

 These functions are holomorphic in $\,\mathbb X\,$.  In fact, we have the following formulas for the Hopf products

$$
\psi^\pm(z)\; = \; \overline{\zeta(\pm \overline{z})} \;,\;\;\; \textnormal{and}\;\; \phi^\pm(z)\; = \; - \overline{\zeta(\pm i \overline{z})} \;,\;\;  \textnormal {respectively.}
$$
Since $\,\zeta\,$  is  holomorphic in $\, \mathbb X\,$, so are the Hopf products $\;\psi^\pm\,$ and $\,\phi^\pm\,$. Now comes the  uniqueness statement in Theorem \ref{thm:main}.  It tells us that all of the above five monotone mappings are the same. We just have established the commutation rules (\ref{SymmetryOfH}), whence it is readily inferred that $\,H\,$ takes points in each of the four lines of symmetry into the same line.
\subsection{Straight Line Segments of Symmetry}

To make it more precise, there are four straight line segments to be considered (sections of $\,\overline{\mathbb X}\,$ along the symmetry lines).

\[  \begin{cases}
 \mathbf A = \{ (x,0)\,;\,\;  -2 \leqslant x \leqslant 2\,\}\;\;,\;\;\textnormal{thus}\;\;\; H : \mathbf A \onto \mathbf A\\
\mathbf B = \{ (x,x)\,;\,\;  -1 \leqslant x \leqslant 1\,\}\;\;,\;\;\textnormal{thus}\;\;\; H : \mathbf B \onto \varepsilon \mathbf B\\
\mathbf C = \{ (0,y)\,;\,\;\,  -2 \leqslant y \leqslant 2\,\}\;\;,\;\;\textnormal{thus}\;\;\; H : \mathbf C \onto \mathbf C\\
\mathbf A = \{ (x,-x)\,;\, -1 \leqslant x \leqslant 1\}\;,\;\;\textnormal{thus}\;\; H : \mathbf D \onto \varepsilon \mathbf D
\end{cases}\]
In particular,  $\, H(0) = 0\,$.

\subsection{Janiszewski Theorem}
Our nearest goal is to show that:
\begin{lemma} \label{MonotoneJaniszewski} All the above four mappings  are monotone on their segments of definition.
\end{lemma}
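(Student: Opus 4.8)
The plan is to reduce all four assertions to a single fact about monotone self-maps of a disk and then apply Janiszewski's theorem. By the symmetries (an affine normalization in the cases of $\mathbf B$ and $\mathbf D$) it is enough to treat $\mathbf A$; normalize so that the pertinent reflection is conjugation $\mathcal T z=\bar z$, so that $\mathcal T(\overline{\mathbb X})=\overline{\mathbb X}$, $\mathcal T(\overline{\mathbb Y})=\overline{\mathbb Y}$, both $\mathbf A=\overline{\mathbb X}\cap\mathbb R$ and $\mathbf A':=\overline{\mathbb Y}\cap\mathbb R=H(\mathbf A)$ are crosscuts, and $H\circ\mathcal T=\mathcal T\circ H$ by \eqref{SymmetryOfH}. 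These crosscuts split $\overline{\mathbb X}=\overline{\mathbb X}^{+}\cup\overline{\mathbb X}^{-}$ and $\overline{\mathbb Y}=\overline{\mathbb Y}^{+}\cup\overline{\mathbb Y}^{-}$ into closed Jordan subdomains interchanged by $\mathcal T$, with $\overline{\mathbb X}^{+}\cap\overline{\mathbb X}^{-}=\mathbf A$ and $\overline{\mathbb Y}^{+}\cap\overline{\mathbb Y}^{-}=\mathbf A'$.

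First I would fold $H$ onto these halves. Let $\pi\colon\overline{\mathbb Y}\onto\overline{\mathbb Y}^{+}$ be the reflection retraction ($\pi=\mathrm{id}$ on $\overline{\mathbb Y}^{+}$, $\pi=\mathcal T$ on $\overline{\mathbb Y}^{-}$), which is continuous, and set $\overline H:=\pi\circ H|_{\overline{\mathbb X}^{+}}\colon\overline{\mathbb X}^{+}\onto\overline{\mathbb Y}^{+}$. Then $\overline H$ is continuous and onto, $\overline H|_{\mathbf A}=H|_{\mathbf A}$, and $\overline H$ carries $\partial\overline{\mathbb X}^{+}$ into $\partial\overline{\mathbb Y}^{+}$: indeed $H(\mathbf A)\subseteq\mathbf A'$, while the orientation-preserving boundary homeomorphism $g$, commuting with $\mathcal T$, maps $\partial\mathbb X\cap\overline{\mathbb X}^{+}$ onto $\partial\mathbb Y\cap\overline{\mathbb Y}^{+}$. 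Moreover $\overline H$ is monotone. A fiber of $\overline H$ over a point of $\mathbf A'$ equals $K\cap\overline{\mathbb X}^{+}$, where $K=H^{-1}(\text{that point})$ is a $\mathcal T$-invariant continuum (a continuum because $H$ is monotone); and $K\cap\overline{\mathbb X}^{+}$ is the image of the connected set $K$ under the reflection retraction $\overline{\mathbb X}\onto\overline{\mathbb X}^{+}$, hence connected. A fiber over a point $w\notin\mathbf A'$ is one of the two disjoint continua $H^{-1}(w)$, $H^{-1}(\mathcal T w)$ — namely the one meeting $\overline{\mathbb X}^{+}$ — because each of these continua, having image disjoint from $\mathbf A'$, avoids $\mathbf A$ and therefore lies entirely in $\mathbb X^{+}$ or entirely in $\mathbb X^{-}$.

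The heart of the argument — and the place where Janiszewski's theorem is used — is the following claim: \emph{a continuous monotone surjection $\Phi\colon\overline{\mathbb D}\onto\overline{\mathbb D'}$ between closed Jordan domains with $\Phi(\partial\mathbb D)\subseteq\partial\mathbb D'$ restricts to a map $\Phi|_{\partial\mathbb D}$ with connected point-preimages}. Suppose not: for some $w\in\partial\mathbb D'$ the continuum $K:=\Phi^{-1}(w)$ meets $\partial\mathbb D$ in a disconnected compact set, so there are two arcs $\sigma_{1},\sigma_{2}$ of the circle $\partial\mathbb D$ with $\sigma_{1}\cup\sigma_{2}=\partial\mathbb D$, $\sigma_{1}\cap\sigma_{2}=\{a,b\}\subseteq K$, and each $\mathrm{int}\,\sigma_{i}$ containing a point off $K$. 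Put $A:=K\cup\sigma_{1}$ and $B:=K\cup\sigma_{2}$. These are closed and connected, $A\cap B=K\cup\{a,b\}=K$ is connected, and $A\cup B=K\cup\partial\mathbb D$. Fix $p\in\mathbb D\setminus K$ and $q$ outside $\overline{\mathbb D}$. Since $\Phi$ is monotone, $\overline{\mathbb D}\setminus K=\Phi^{-1}(\overline{\mathbb D'}\setminus\{w\})$, hence also $\mathbb D\setminus K$, is connected; using a point of $\mathrm{int}\,\sigma_{2}$ lying off $K$, the set $\mathbb D\setminus K$ reaches through a small neighborhood of that point to the exterior of $\overline{\mathbb D}$, all inside $S^{2}\setminus A$, so $A$ does not separate $p$ from $q$; symmetrically (with $\mathrm{int}\,\sigma_{1}$), neither does $B$. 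By Janiszewski's theorem $A\cup B=K\cup\partial\mathbb D$ does not separate $p$ from $q$ — contradicting the Jordan curve theorem, since $\partial\mathbb D\subseteq A\cup B$ while $p\in\mathbb D$ and $q\notin\overline{\mathbb D}$.

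Applying this claim to $\Phi=\overline H$ shows $\overline H|_{\partial\overline{\mathbb X}^{+}}$ has connected fibers; restricting further to the subarc $\mathbf A\subseteq\partial\overline{\mathbb X}^{+}$ and recalling $\overline H|_{\mathbf A}=H|_{\mathbf A}\colon\mathbf A\to\mathbf A'$ (whose connected subsets are subarcs) yields that $H|_{\mathbf A}$ is monotone; the same reasoning applies verbatim to $\mathbf B$, $\mathbf C$, $\mathbf D$. I expect the Janiszewski step of the third paragraph to be the main obstacle: one must choose the two closed sets and the separated pair of points just right, and the crucial non-separation hypotheses depend precisely on the Whyburn property that preimages of connected sets under a monotone map are connected. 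The equivariant folding of the second paragraph is routine but does rely on the (geometrically evident) fact that each of the four symmetry lines is an honest crosscut of both $\mathbb X$ and $\mathbb Y$, so that the halves $\overline{\mathbb X}^{\pm}$, $\overline{\mathbb Y}^{\pm}$ are genuine closed Jordan domains.
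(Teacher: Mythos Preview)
Your proof is correct, but the route differs from the paper's. Both arguments hinge on Janiszewski's theorem, yet in different forms and at different stages. The paper works directly with the fiber $\mathcal C = H^{-1}(q)$ for $q\in\mathbf A$: it first shows that $\mathcal C$ is not a cut of $\mathbb R^2$ (because $\mathbb R^2\setminus\mathcal C = (\mathbb R^2\setminus\overline{\mathbb X})\cup(\overline{\mathbb X}\setminus\mathcal C)$ is a union of two connected sets with nonempty overlap, the second being connected by monotonicity), then uses the $\mathcal T$-symmetry of $\mathcal C$ to write $\mathcal C=\mathcal C_+\cup\mathcal C_-$ with $\mathcal C_\pm=\{z\in\mathcal C:\pm\Im z\geqslant 0\}$ (each a continuum, being the image of $\mathcal C$ under the folding retraction), and finally applies Janiszewski in the form ``if the union of two continua is not a cut, their intersection is connected'' to conclude that $\mathcal C_+\cap\mathcal C_-=\mathcal C\cap\mathbf A$ is connected. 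That is the whole argument: no auxiliary map, no general lemma. Your approach instead folds $H$ to a monotone $\overline H\colon\overline{\mathbb X}^+\onto\overline{\mathbb Y}^+$ carrying boundary to boundary, and proves a reusable lemma (any monotone surjection of closed Jordan domains with $\Phi(\partial\mathbb D)\subseteq\partial\mathbb D'$ has monotone boundary restriction) via the separation form of Janiszewski. What you gain is a standalone statement of independent interest; what you pay is the extra folding layer and one step you assert without proof, namely that $\overline{\mathbb D}\setminus K$ connected implies $\mathbb D\setminus K$ connected---true for a closed Jordan domain (Schoenflies plus a local half-disk argument shows a disconnection of $\mathbb D\setminus K$ would induce one of $\overline{\mathbb D}\setminus K$), but worth a sentence of justification.
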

\begin{proof}
The proof will only be given for the mapping $\,H : \mathbf A  \onto \mathbf A $; the other cases can be treated in much the same way.
The key ingredient is the topological theorem of  Z. Janiszewski  \cite{Jani} (1913).

\begin{definition}  With reference to  K. Kuratowski' book (\cite{Kuratowski}, Topology Vol. II, page 505),  the  Janiszewski space is a locally connected continuum having the following property:

If $\,\mathcal C_+\,$ and $\,\mathcal C_-\,$ are two continua whose intersection $\,\mathcal C_+ \cap \mathcal C_-\,$ is not connected, the union  $\,\mathcal C_+ \cup \mathcal C_-\,$ is a cut of the space (its complement is disconnected).
\end{definition}
\begin{center}\textit{The sphere $\,\mathbb S^2\,$ is a Janiszewski space} \end{center}
 see \cite{Kuratowski}, Ch. X, page 506.

\begin{figure}[h!]
    \centering
    \includegraphics[width=1.0\textwidth]{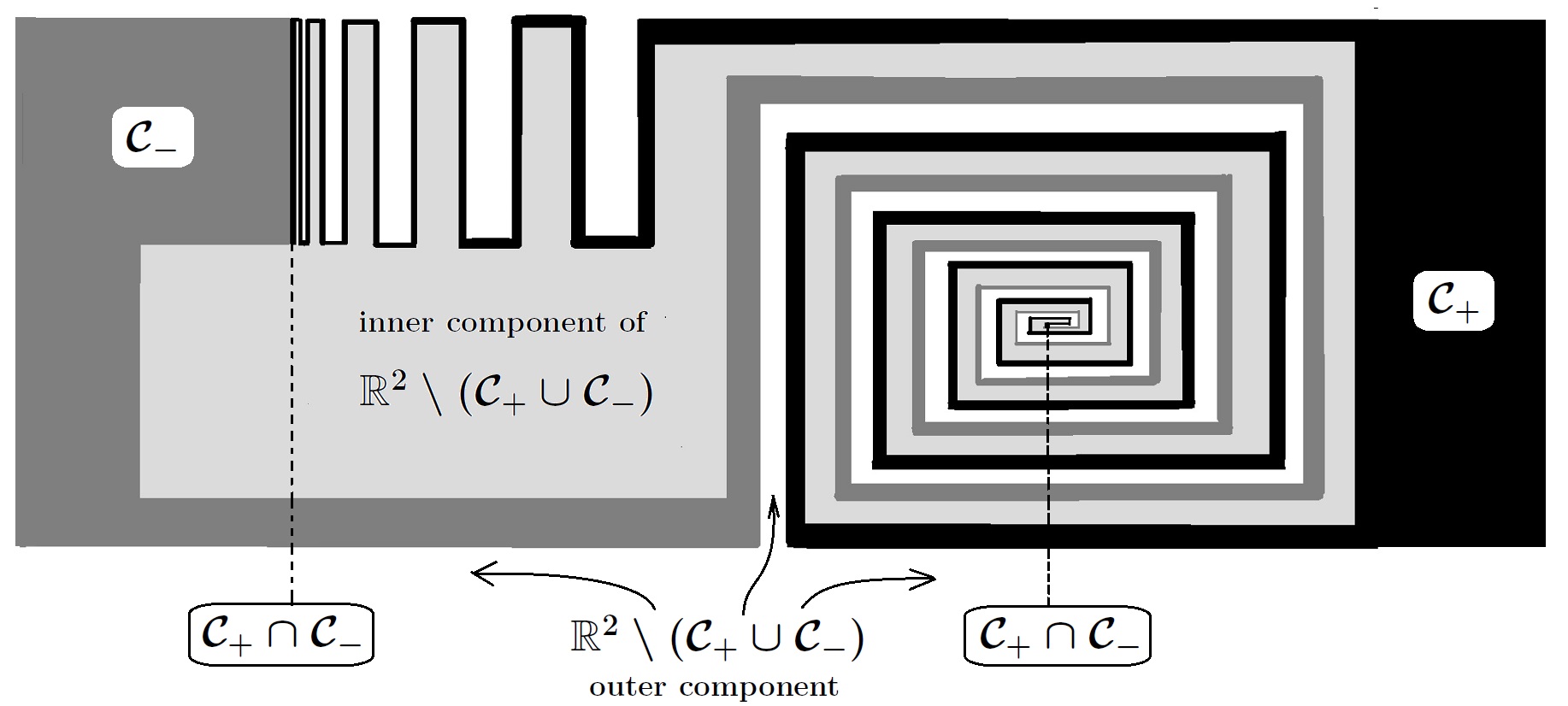}
    \caption{Janiszewski continua with two complementary components}
\end{figure}

 Now choose and fix a point in the target space, say $\,q \in H(\mathbf A) = \mathbf A\,$. Since $\,H : \overline{\mathbb X} \onto \overline{\mathbb Y} \,$ is monotone, its preimage $\,  \mathcal C \bydef \{ z \in \overline{\mathbb X} :   H(z) = q\,\}$ is a continuum. Our aim is to show that $\,\mathcal C \cap \mathbf A\,$ is connected. For this, we first observe (quite a general fact about monotone mappings) that $\,\mathcal C \subset \mathbb R^2\,$ is not a cut of $\,\mathbb R^2\,$, meaning that its complement is connected. Indeed, we have
 $$
 \mathbb R^2 \setminus \mathcal C  =  (\mathbb R^2 \setminus \overline{\mathbb X} ) \cup (\overline{\mathbb X} \setminus \mathcal C))\;,\;\;\textnormal{because}\;\; \mathcal C \subset \overline{\mathbb X}\, $$
  Both terms in this union are connected; the first by obvious reasons, the second is just a primage under $\,H :  \overline{\mathbb X} \onto \overline{\mathbb Y}\,$ of the connected set $\,\overline{\mathbb Y} \setminus \{q\}\,$. We need only verify that the intersection of those terms is not empty. But this is immediate from the formula
  $$
  (\mathbb R^2 \setminus \overline{\mathbb X} ) \cap (\overline{\mathbb X} \setminus \mathcal C)\; =\; \partial \mathbb X \setminus \mathcal C  \not = \emptyset
  $$
 We are now in a position to appeal to Janiszewski Theorem.

 For this, note that the above-mentioned symmetry of $\,H\,$  yields the respective symmetry of $\, \mathcal C\,$. Specifically, $\, z \in \mathcal C \;\;\rightleftarrows\;\; \bar{z} \in \mathcal C\,$. Then $\,\mathcal C\,$ can be decomposed in accordance with sign of $\,\Im m \,z \,$ as follows:   $\,\mathcal C =  \mathcal C_+  \cup \,\mathcal C_-\,$, where

 $$
  \mathcal C_+  = \{ z \in \mathcal C :\; \Im m \,z  \geqslant 0\,\} \;\;\textnormal{ and}\;\; \mathcal C_-  = \{ z \in \mathcal C :\; \Im m \,z  \leqslant 0\,\}
 $$
 It is readily seen that both $\,\mathcal C_+\,$ and   $\,\mathcal C_-\,$ are continua, and
 $$
 \mathcal C \cap \mathbf A   =   \mathcal C_+  \cap \mathcal C_-  \;\;\;
 $$

 Since $\,\mathcal C\,$ is not a cut of $\,\mathbb R^2\,$, by Janiszewski's Theorem,   the intersection  $\,\mathcal C_+  \cap \,\mathcal C_-\,$ must be connected, completing the proof of Lemma \ref{MonotoneJaniszewski}.
\end{proof}

\subsection{Segments of Squeezing}

The next step in our discussion is to look at the pre-images of the four points $\, \pm\varepsilon \pm i \varepsilon\,$  (exactly where  $\,\partial \mathbb Y_\varepsilon\,$ fails to be convex) under the monotone mappings $\,H : \mathbf B \onto \varepsilon \mathbf B\,$ and  $\, H : \mathbf D \onto  \varepsilon \mathbf D\,$, respectively. These pre-images, being connected, must be straight line segments in $\,\mathbf B\,$ and $\,\mathbf D\,$ with endpoints at $\,\pm 1   \pm i\,$, respectively. They do not pass through the origin, because  $\,H(0) = 0\,$. They have the same length (possibly zero) because of the rotational symmetry $\, H(iz) = i H(z)\,$. Let us denote these segments by,

$$
\mathbf B^+ = \{ t + i t\, ; \,  \rho \leqslant t  \leqslant 1\,\} \;\;,\;\; \mathbf B^- = \{ -t - i t\, ; \,  \rho \leqslant t  \leqslant 1\,\}
$$

$$
\mathbf D^+ = \{ t - i t \,; \,  \rho \leqslant t  \leqslant 1\,\} \;\;,\;\; \mathbf D^- = \{ -t + i t \,; \,  \rho \leqslant t  \leqslant 1\,\}
$$
\begin{remark} Note that at this stage of our arguments one cannot claim yet that $\, \mathbf B^\pm\,$ and $\,\mathbf D^\pm\,$ are the only collapsing sets, though it will turn out to be true.
\end{remark}

\subsection{Outside the Cracks}
 We now remove the collapsing segments $\,\mathbf B^\pm\,$ and $\,\mathbf D^\pm\,$ from $\,\mathbb X\,$ (interpreting  them as cracks in $\,\mathbb X\,$ that are squeezed to the boundary points at which $\,\partial \mathbb Y\,$ fails to be convex),

\begin{equation}\label{CutsInX}
\mathbb X_\divideontimes  \bydef \mathbb X \setminus ( \mathbf B^+  \cup \mathbf B ^- \cup \mathbf D^+ \cup \mathbf D^- )
\end{equation}

\begin{proposition} \label{PartHarm}The map $ \,H : \mathbb X_\divideontimes  \onto \mathbb Y\,$  is a harmonic diffeomorphism. In fact $\,\mathbb X_\divideontimes  =  H^{-1}(\mathbb Y)\,$.
\end{proposition}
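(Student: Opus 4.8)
The plan is to deduce the Proposition from the partial harmonicity already established in Theorem~\ref{thm:main}, once the set $\mathbb X\setminus H^{-1}(\mathbb Y)$ has been pinned down exactly. By that theorem $H$ is locally Lipschitz on $\mathbb X$ and restricts to a harmonic diffeomorphism of $H^{-1}(\mathbb Y)$ onto $\mathbb Y$, so $\mathbb X\setminus H^{-1}(\mathbb Y)=\mathbb X\cap H^{-1}(\partial\mathbb Y)$; we may assume $\zeta:=H_z\overline{H_{\bar z}}\not\equiv0$, for otherwise $H$ would be holomorphic, hence conformal and harmonic on $\mathbb X$, contradicting the failure of the harmonic extension for $\varepsilon<\varepsilon_\sharp$. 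Thus the whole statement reduces to the set equality $\mathbb X\cap H^{-1}(\partial\mathbb Y)=(\mathbf B^+\cup\mathbf B^-\cup\mathbf D^+\cup\mathbf D^-)\cap\mathbb X$. The inclusion ``$\supseteq$'' is built into the construction preceding the Proposition: on each of the four segments $H$ is constant and equal to one of the four reflex vertices $\pm\varepsilon\pm i\varepsilon$ of $\partial\mathbb Y_\varepsilon$.

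For ``$\subseteq$'' I would first confine the singular set to those vertices. Fix $y_\circ\in\partial\mathbb Y\setminus\{\pm\varepsilon\pm i\varepsilon\}$; near $y_\circ$ the boundary $\partial\mathbb Y$ is a smooth, strictly convex elliptical arc, so for small $r>0$ the cell $\mathcal Q=\mathbb D(y_\circ,r)\cap\mathbb Y$ is a bounded convex domain and $\mathbb D(y_\circ,r)\cap\partial\mathbb Y$ is a single open arc. I would then repeat verbatim the reasoning of \S\ref{sec:ph}--\S\ref{sec:php} and of Lemma~\ref{lem:touch}: $\mathcal U=H^{-1}\!\big(\mathbb D(y_\circ,r)\cap\overline{\mathbb Y}\big)\cap\mathbb X$ is a simply connected domain, $H\colon\partial\mathcal U\onto\partial\mathcal Q$ is monotone (the topological facts of \cite{IOmono}), its harmonic extension $\widetilde H$ is a diffeomorphism of $\mathcal U$ onto $\mathcal Q$ by Lemma~\ref{lem:RKCsimply}, and the energy inequality of Lemma~\ref{lem:star} together with the minimality of $\widetilde H$ forces $H=\widetilde H$ on $\mathcal U$; in particular $H(\mathcal U)=\mathcal Q\subset\mathbb Y$. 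Since any $z_\circ\in\mathbb X$ with $H(z_\circ)=y_\circ$ would lie in $H^{-1}\!\big(\mathbb D(y_\circ,r)\cap\overline{\mathbb Y}\big)\cap\mathbb X=\mathcal U$, this is impossible. Hence $\mathbb X\cap H^{-1}(\partial\mathbb Y)$ is the union of the four fibres $H^{-1}(p_j)$, $p_j\in\{\pm\varepsilon\pm i\varepsilon\}$.

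The second step identifies each fibre. Take $p=\varepsilon+i\varepsilon$ and $\mathcal C=H^{-1}(p)$; by monotonicity $\mathcal C$ is a continuum in $\overline{\mathbb X}$ meeting $\partial\mathbb X$ only at $g^{-1}(p)=1+i$ (because $g$ is a homeomorphism of $\partial\mathbb X$), and by construction $\mathcal C\cap\mathbf B=\mathbf B^+$. From the symmetry relations around \eqref{eq:53} and the uniqueness clause of Theorem~\ref{thm:main} one gets $\zeta(z)=-\overline{\zeta(i\bar z)}$, so $\zeta$ is purely imaginary along the fixed line of $\mathcal R_+$, which is the diagonal $\mathbf B$; because $H$ collapses $\mathbf B^+$ (so $J_H=0$ and, by \eqref{hHhV}--\eqref{hHhVJh} away from the zeros of $\zeta$, $\partial_{_\mathsf V}H=0$ there while $\abs\zeta=\abs{H_z}^2>0$ on its open part), the tangent to $\mathbf B^+$ is the \emph{vertical} direction for $\zeta$ — the horizontal alternative being excluded by the constancy of $H$ on $\mathbf B^+$ — so the open arc of $\mathbf B^+$ lies on one vertical trajectory $\gamma$, necessarily with $\overline\gamma\subset\mathbf B$. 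If $\gamma$ is noncritical, its closure is a crosscut of $\mathbb X$ lying in $\mathbf B$, so Proposition~\ref{pro:212} gives $\mathcal C\subset\overline\gamma\subset\mathbf B$, whence $\mathcal C=\mathcal C\cap\mathbf B=\mathbf B^+$. Granting $\mathcal C=\mathbf B^+$, the rotational and reflectional symmetries $H(iz)=iH(z)$ and $H(\bar z)=\overline{H(z)}$ (also consequences of \eqref{eq:53} and uniqueness), which carry $\mathbf B$ onto $\mathbf B$ or $\mathbf D$, give the analogous identities for the other three reflex vertices. Therefore $\mathbb X\setminus H^{-1}(\mathbb Y)=\mathbf B^+\cup\mathbf B^-\cup\mathbf D^+\cup\mathbf D^-$ inside $\mathbb X$, i.e. $H^{-1}(\mathbb Y)=\mathbb X_\divideontimes$, which together with the partial harmonicity of Theorem~\ref{thm:main} is exactly the Proposition.

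I expect the main obstacle to be the sub-case left open above, where $\gamma$ is a \emph{critical} trajectory: then $\overline\gamma$ is a proper sub-arc of $\mathbf B$ ending at a zero $z_0$ of $\zeta$ lying on $\mathbf B$, Proposition~\ref{pro:212} supplies no information there, and a priori $\mathcal C$ could branch off $\mathbf B$ by running along one of the other vertical rays issuing from $z_0$. Here the argument must use the $\mathcal R_+$-invariance of $\mathcal C$ (valid since $\mathcal R_+$ fixes $p$) together with the local ray structure of a holomorphic quadratic differential at a zero: the vertical rays at $z_0$ are permuted by $\mathcal R_+$, exactly one of them sits on the invariant line $\mathbf B$ — the one continuing $\gamma$ toward $1+i$ — and the rest occur in off-diagonal mirror pairs, so a symmetric continuum attached along $\mathbf B^+$ cannot acquire any of them without violating $\mathcal C\cap\mathbf B=\mathbf B^+$. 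This is the single place where the exact symmetries of the clover are genuinely needed; the phenomenon that no such branching occurs is exhibited transparently in Example~\ref{ex:butterfly}, whose explicit Hopf differential has its zero off the collapsed ray.
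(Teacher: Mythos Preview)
Your route is genuinely different from the paper's, and the first step --- confining $\mathbb X\cap H^{-1}(\partial\mathbb Y)$ to the four reflex vertices via the convex boundary-cell argument of Lemma~\ref{lem:touch} --- is correct and elegant. The difficulty is entirely in the second step.

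The paper proves the Proposition by a short energy argument that \emph{bypasses trajectory analysis altogether}. It cuts $\overline{\mathbb X}$ into four sectors $\mathbb X_\sphericalangle$ along $\mathbf B\cup\mathbf D$ and likewise $\overline{\mathbb Y}$ into \emph{convex} sectors $\mathbb Y_\sphericalangle$. By Lemma~\ref{MonotoneJaniszewski}, the restriction $H\colon\partial\mathbb X_\sphericalangle\onto\partial\mathbb Y_\sphericalangle$ is monotone, so the RKC theorem furnishes a harmonic diffeomorphism $\widetilde H\colon\mathbb X_\sphericalangle\onto\mathbb Y_\sphericalangle$ with the same boundary values; gluing the four pieces yields a monotone $\widetilde H\in\mathscr M_g(\overline{\mathbb X},\overline{\mathbb Y})$ with $\mathscr E(\widetilde H)\le\mathscr E(H)$. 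Uniqueness (Proposition~\ref{pro:412}) forces $\widetilde H=H$, and since $\widetilde H^{-1}(\mathbb Y)=\mathbb X_\divideontimes$ by construction, one reads off $H^{-1}(\mathbb Y)=\mathbb X_\divideontimes$. No information about $\zeta$ or its trajectories is used.

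Your second step has a gap beyond the critical-trajectory sub-case you flag. The claim that the tangent to $\mathbf B^+$ is the \emph{vertical} (not horizontal) direction for $\zeta$ rests on $|\partial_{\mathsf H}H|^2-|\partial_{\mathsf V}H|^2=4|\zeta|$ together with the vanishing of the tangential derivative of $H$ along $\mathbf B^+$; but these identities hold only at points where $H$ is differentiable in the $2$D sense, and local Lipschitz regularity gives this only a.e.\ in $\mathbb X$ --- not necessarily anywhere on the null set $\mathbf B^+$. One can repair the dichotomy indirectly (if a sub-arc of $\mathbf B^+$ were horizontal, $\mathcal C\supset\mathbf B^+$ would meet uncountably many noncritical vertical trajectories transversally, and Proposition~\ref{pro:212} would force $\mathcal C$ into a single such trajectory, contradicting the containment of a horizontal arc), but you still need to exclude zeros of $\zeta$ on the open part of $\mathbf B^+$: otherwise $\mathbf B^+$ splits into several vertical arcs, each lying on a \emph{critical} trajectory, and Proposition~\ref{pro:212} is silent. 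The symmetry heuristic you offer for the critical case --- that off-diagonal vertical rays at a zero $z_0$ come in $\mathcal R_+$-mirror pairs and therefore cannot be acquired by a symmetric continuum --- is not a proof: a symmetric continuum may perfectly well contain a mirror pair of off-diagonal arcs without touching $\mathbf B$ beyond $\mathbf B^+$. The paper's sector argument sidesteps all of this with a two-line energy comparison.
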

\begin{proof} The proof is based on Proposition \ref{pro:412},  which asserts   that $\,H\,$   is the unique energy-minimal map among all monotone Sobolev mappings from $\, \overline{\mathbb X }\onto \overline{\mathbb Y}\,$ with the prescribed boundary data $\, g_\varepsilon  : \partial \mathbb X  \onto  \partial \mathbb Y_\varepsilon\,$. Our first aim is to construct a monotone Sobolev mapping $\, \widetilde{H }: \overline{\mathbb X} \onto \overline{\mathbb Y}\,$ whose energy does not exceed  the energy of $\,H\,$. For this purpose, we cut the circular clover $\,\overline{\mathbb X}\,$ into four sectors along the line segments $\,\mathbf B\,$ and $\,\mathbf D\,$. Let us introduce a  generic notation  for these sectors.

\[ \mathbb X_\sphericalangle \; \bydef \begin{cases}
 \mathbb X_1 \;\, \bydef    \{ (x , y )  \in \mathbb X ;\;  x > 0 , \;  - x < y < x\,\}\;\;,\;\;\textnormal{thus}\;\;\; 1 \in \mathbb X_1 \\
 \mathbb X_i \;\,\, \bydef    \{ (x , y )  \in \mathbb X ;\;  y > 0 , \;  - y < x < y\,\}\;\;,\;\;\textnormal{thus}\;\;\; i \in \mathbb X_i\\
 \mathbb X_{-1}  \bydef    \{ (x , y )  \in \mathbb X ;\;  x < 0 , \;   x < y < - x\,\}\;\;,\;\;\textnormal{thus}\;\;\; -1 \in \mathbb X_{-1}\\
 \mathbb X_{-i} \, \bydef    \{ (x , y )  \in \mathbb X ;\;  y < 0 , \;  y < x < -y\,\}\;\;,\;\;\textnormal{thus}\;\;\; -i \in \mathbb X_{-i}
\end{cases}\]
Analogously, we cut the elliptical clover into four sectors, Figure \ref{CCL}.

\[\mathbb Y_\sphericalangle\; \bydef \begin{cases}
 \mathbb Y_1 \;\, \bydef    \{ (x , y )  \in \mathbb Y ;\;  x > 0 , \;  - x < y < x\,\}\;\;,\;\;\textnormal{thus}\;\;\; 1 \in \mathbb Y_1 \\
 \mathbb Y_i \;\,\, \bydef    \{ (x , y )  \in \mathbb Y ;\;  y > 0 , \;  - y < x < y\,\}\;\;,\;\;\textnormal{thus}\;\;\; i \in \mathbb Y_i\\
 \mathbb Y_{-1}  \bydef    \{ (x , y )  \in \mathbb Y ;\;  x < 0 , \;   x < y < - x\,\}\;\;,\;\;\textnormal{thus}\;\;\; -1 \in \mathbb Y_{-1}\\
 \mathbb Y_{-i} \, \bydef    \{ (x , y )  \in \mathbb Y ;\;  y < 0 , \;  y < x < -y\,\}\;\;,\;\;\textnormal{thus}\;\;\; -i \in \mathbb Y_{-i}
\end{cases}\]

\subsection{Sector-wise RKC Extension of $\,H\,$}
  We first define  $\, \widetilde{H}\,$ on the boundary of each sector by setting $\, \widetilde{H} = H : \partial \mathbb X_\sphericalangle  \onto \partial \mathbb Y_\sphericalangle\,$ respectively.  In particular, $\, \widetilde{H} = H : \partial \mathbb X  \onto \partial \mathbb Y \,$. These boundary mappings are monotone. We extend them harmonically into the corresponding sectors, and denote by $\, \widetilde{H} :  \mathbb X_\sphericalangle  \onto  \mathbb Y_\sphericalangle \,$ respectively. It should be noted that these are the energy-minimal extensions. Moreover, by  Rad\'{o}-Kneser-Choquet Theorem, see Theorem  \ref{RKC}\,, each $\, \widetilde{H} :  \mathbb X_\sphericalangle  \onto  \mathbb Y_\sphericalangle \,$ is a homeomorphism, which  makes it clear that the map $\, \widetilde{H} :  \overline{\mathbb X}  \onto  \overline{\mathbb Y} \,$ so defined is  monotone and it lies in the Sobolev class $\, \widetilde{H} \in \mathscr W^{1,2} (\mathbb X, \mathbb Y)\,$. It is also important to notice the following formula for the domain $\,\mathbb X\,$ with cuts, as defined at (\ref{CutsInX}). Namely,

 \begin{equation}\label{NoSqueezedSubdomain}
\mathbb X_\divideontimes  \bydef \mathbb X \setminus ( \mathbf B^+  \cup \mathbf B ^- \cup \mathbf D^+ \cup \mathbf D^- )\;=\; \widetilde{H}^{-1} (\mathbb Y)
\end{equation}

 Proceeding further in this direction, we estimate the energy of $\,\widetilde{H}\,$ as follows:

 \begin{equation} \begin{split}
 &\mathscr E(\widetilde{H}_{| \mathbb X})  =  \mathscr E(\widetilde{H}_{| \mathbb X_1})\;+\; \mathscr E(\widetilde{H}_{| \mathbb X_i}) \;+\;\mathscr E(\widetilde{H}_{| \mathbb X_{-1}})\;+\; \mathscr E(\widetilde{H}_{| \mathbb X_{-i}}) \\
  &\leqslant  \mathscr E(H_{| \mathbb X_1})\;+\; \mathscr E(H_{|\mathbb X_i}) \;+\;\mathscr E(H_{| \mathbb X_{-1}})\;+\; \mathscr E(H_{| \mathbb X_{-i}}) \; =\;  \mathscr E(H_{| \mathbb X}) \nonumber
 \end{split}
 \end{equation}
On the other hand, according to Proposition \ref{pro:412}\,,  $\,H\,$ is the unique energy-minimal map among all monotone Sobolev mappings with the prescribed boundary data $\, g_\varepsilon :  \partial \mathbb X \onto \partial\mathbb Y\,$; $\,\widetilde{H}\,$  is thereby equal to $\,H\,$ in the entire region $\,\overline{\mathbb X}\,$. Formula (\ref{NoSqueezedSubdomain}) reads as:

\begin{equation}\label{NoSqueezedSubdomain2}
\mathbb X_\divideontimes  \bydef \mathbb X \setminus ( \mathbf B^+  \cup \mathbf B ^- \cup \mathbf D^+ \cup \mathbf D^- )\;=\; H^{-1} (\mathbb Y)
\end{equation}
The proof of Proposition \ref{PartHarm}  is completed by invoking the last statement of Theorem \ref{thm:main}, which tells us that $\,H\,$ is a harmonic diffeomorphism from $\,H^{-1}(\mathbb Y)\,$  onto $\mathbb Y\,$. For additional benefit, it also tells us that $\,H\,$ is locally Lipschitz on $\,\mathbb X\,$ (with cracks included).
\end{proof}
\subsection{Summary}

This example makes it clear that the \textit{Hopf Laplace equation} and monotonicity imposed on its solutions circumvent injectivity difficulties.
\begin{center}
{\it When harmonic extensions fail, \\
the Hopf-harmonics come to rescue.}
\end{center}

\begin{figure}[h!]
    \centering
    \includegraphics[width=1.0\textwidth]{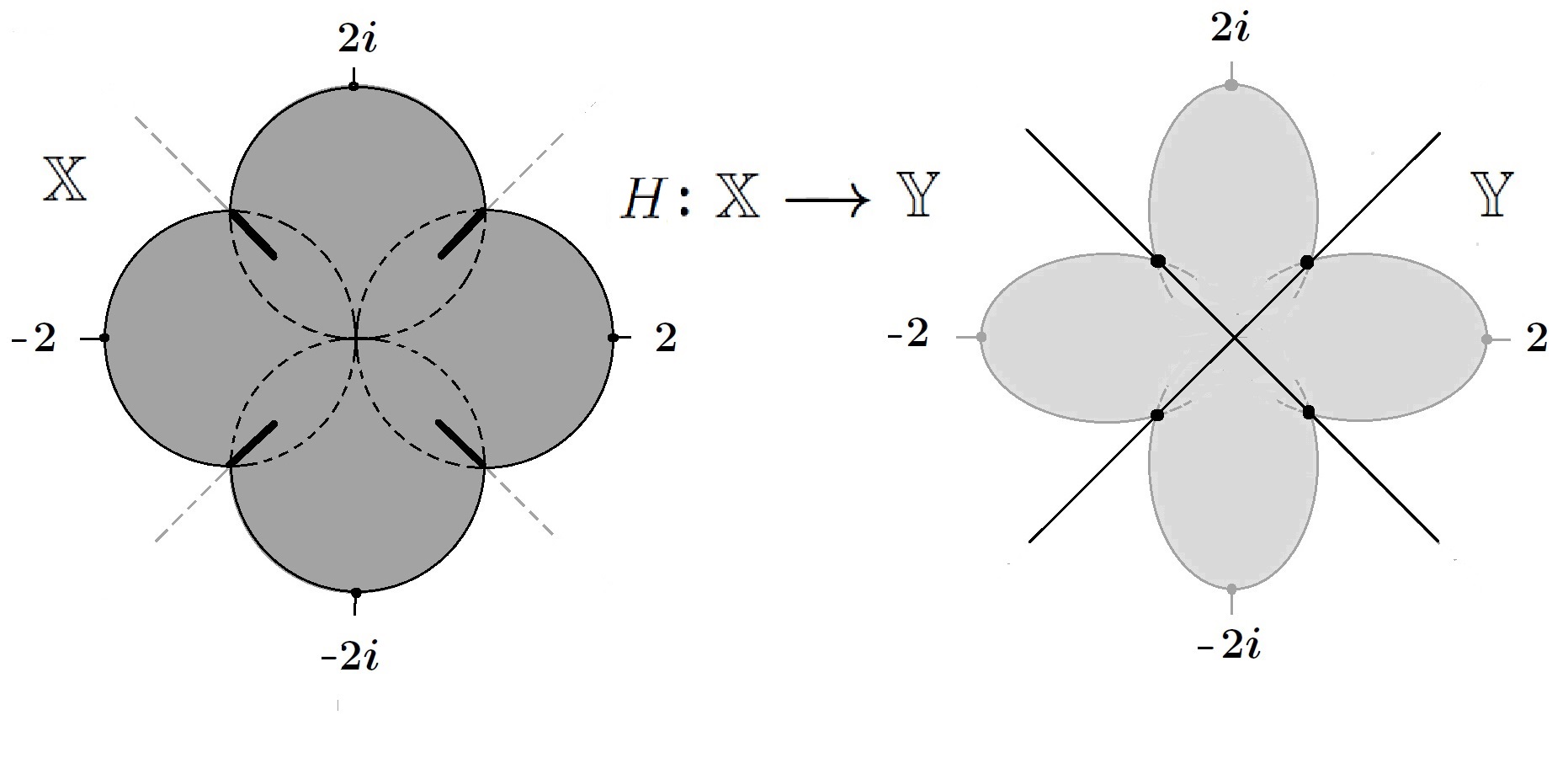}
    \caption{Cuts in a clover are inevitable when $\varepsilon \approx 0\,$. Finding an explicit  formula for  the length of cuts in terms of  $\,\varepsilon\,$, seemingly only a technical problem, is actually quite difficult.}\label{CutsClover}
\end{figure}

\section{An Alternating Process of \\Constructing Monotone Hopf-harmonics}
In this last section we set out a scheme of possible construction of   monotone Hopf-harmonic mapping of a simply connected Jordan domain $\,\mathbb B \subset \mathbb R^2\,$ onto a non-convex Lipschitz domain $\,\mathbb Y \subset \mathbb R^2\,$. The proposed scheme is motivated by the classical Schwarz Alternating Method that was originated in \cite{Schwarz1, Schwarz2, Schwarz3} for theoretical studies of conformal mappings and related planar harmonic functions.  More recently, this method gained a lot of attention as a very efficient algorithm for parallel computers.  There is a substantial literature on Schwarz Alternating Method for general second order elliptic PDEs,  beginning in 1951 with S.G. Mikhlin's paper \cite{M}  on convergence of the iterates. See the fundamental work of Lions~\cite{Lions1, Lions2, Lions3} for far reaching developments and the expository publications by Chan and Mathew~\cite{CM} and Le Tallec~\cite{Ta}, and the book of Smith, Bjorstad and Gropp~\cite{SBG}.

We do not attempt to rise and answer the most general questions. Our eventual aim here (not fully realized yet) is to illustrate that the idea of Schwarz remarkable technique can potentially be exercised for monotone solutions of the Hopf-Laplace equation.
 To emphasize the analogy and differences in our approach, let us take a glimpse of the Schwarz Alternating Method for constructing scalar (real valued)  harmonic functions. This scalar case reveals the first major difference; namely,  the comparison principle (a powerful tool for scalar harmonic functions)  is unavailable when studying complex harmonic homeomorphisms.\\

The classical Schwarz method works as follows. Let a domain $\,\mathbb B \subset \mathbb R^2\,$ be expressed as union of two overlapping subdomains $\,\mathbb B = \mathbb B_1 \cup \mathbb B_2\,$. We assume that for  each of these subdomains  one can solve the Dirichlet problem (under any reasonable boundary data).  Let a given (reasonable)  function $\, g \in\mathscr C(\overline{\mathbb B})\,$ represent a boundary data for the Dirichlet problem in $\,\mathbb B\,$. The alternating process begins with a function $\,g_1\,$ on $\,\overline{\mathbb B}\,$ that is harmonic on $\,\mathbb B_1\,$  and has the same values as  $\,g\,$ on $\,\partial \mathbb B_1\,$; call it \textit{ harmonic replacement} of $\,g \in \mathscr C(\overline{\mathbb B_1})\,$. On the remaining part $\,\overline{\mathbb B} \setminus \mathbb B_1\,$, we set $\,g_1 = g\,$. The next function $\,g_2 \in \mathscr C(\overline{\mathbb B})\,$ is harmonic on $\,\mathbb B_2\,$  with the same values as  $\,g_1\,$ on $\,\partial \mathbb B_2\,$, and coincides with $\,g_1\,$ on $\,\overline{\mathbb B} \setminus \mathbb B_2\,$. Continuing in this manner, we capture a sequence $\, \{g_1, g_2, g_3, g_4, ... \}\,$ which (under suitable geometric/analytic  hypotheses) converges to the solution of the Dirichlet problem in $\,\mathbb B\,$, see \cite{M}.

 The point to make here is that during this process the subdomains $\,\mathbb B_1\,$ and $\, \mathbb B _2\,$ stay the same for all time; only the boundary data of the harmonic replacements change. This remains in major contrast with our alternating approach for the monotone Hopf harmonics. Precisely, in our method the subdomains $\,\mathbb B_1\,$ and $\, \mathbb B _2\,$  will vary, but their images under the harmonic replacements will always be the same convex domains, say $\,\mathbb Y_1\,$ and $\,\mathbb Y_2\,$, respectively. We can make this clear by means of the following example.

 \subsection{An Example} It involves no loss of generality in assuming that $\,\mathbb B\,$ (a simply connected Jordan domain) is the unit disk. In our example,  the target $\,\mathbb Y \subset \mathbb R^2\,$ is  assembled with two convex subdomains $\,\Y_1\,$ and $\,\Y_2\,$ such that $\,\Y_1 \cap\Y_2 \,\not= \emptyset\,$; these composition of $\,\mathbb Y\,$  will stay the same during the entire alternating process. In particular, the target domain $\Y\bydef\Y_1 \cup \Y_2$ is somewhere convex. We shall also assume that $\,\mathbb Y\,$ is Lipschitz regular. Furthermore, taking for $\,\mathbb Y\,$  a symmetric heart shaped domain, as in  Figure \ref{CHeart1},  considerably eases the arguments.

\begin{figure}[h!]
    \centering
    \includegraphics[width=1.1\textwidth]{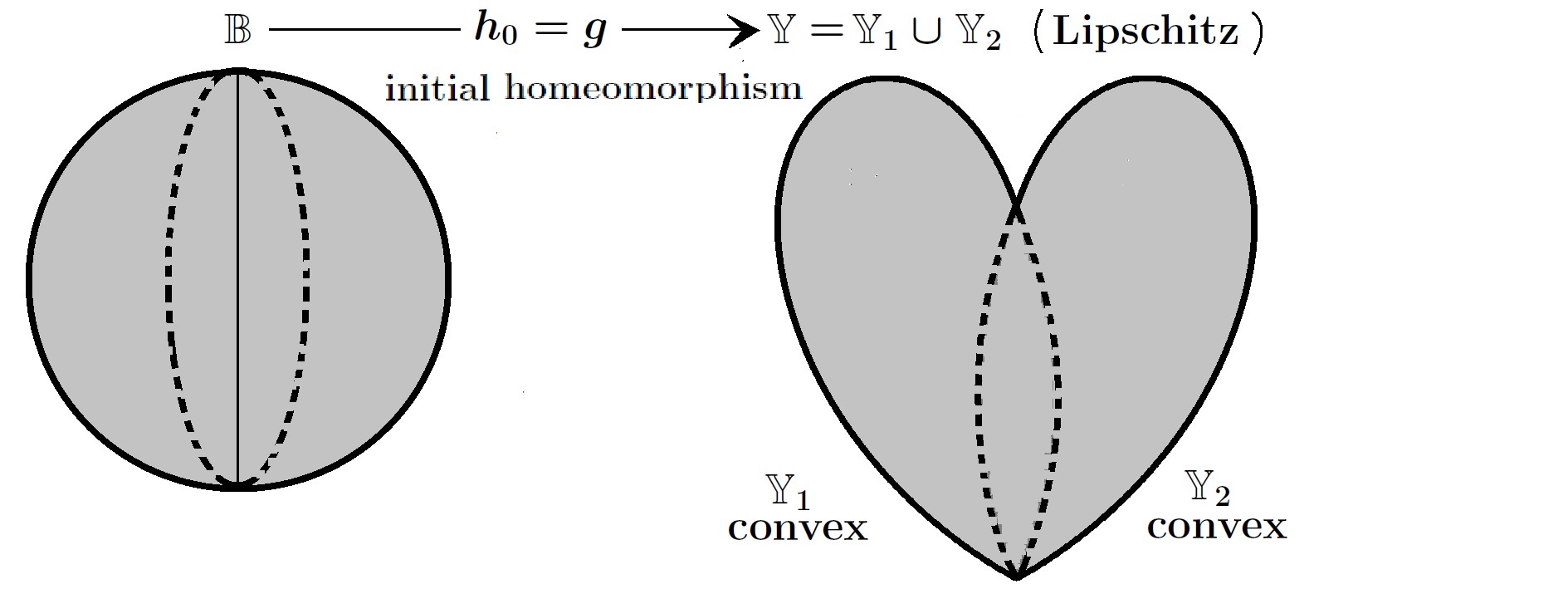}
    \caption{Heart shaped target and the symmetric initial homeomorphism $\, g :  \mathbb B \onto \mathbb Y\,$.}\label{CHeart1}
\end{figure}
Let $\,g \colon \overline{\mathbb B} \onto \overline{\Y}$ be a homeomorphism in the Sobolev class $\W^{1,2} (\mathbb B , \mathbb C)$.
According to Theorem~\ref{thm:main} there is a unique monotone Hopf-harmonic map $\,h \colon  \overline{\mathbb B} \onto \overline{\Y}$ which agrees with $g$ on $\,\partial \mathbb B\,$. To simplify matters further,  we assume that the boundary data $\, g : \partial \mathbb B \onto \partial \mathbb Y\,$ is also symmetric about the vertical axis. Precisely,  $\, g(-x, y) = - \overline{g(x,y)}$. By the arguments similar to those for~\eqref{SymmetryOfH} and~\eqref{eq:53}  It then follows from the uniqueness statement in Theorem~\ref{thm:main} that $\, h(-x, y) = - \overline{h(x,y)}\,$, everywhere in $\,\overline{\mathbb B}\,$. On the other hand, by Theorem \ref{thm:main}  $\,h^{-1}(\mathbb Y)\,$ is a simply connected subdomain of $\,\mathbb B\,$ in which $\, h \,$ is a harmonic diffeomorphism. Such a subdomain must be the entire disk $\,\mathbb B\,$ with a cut (possibly empty) along a segment of the vertical diagonal. Example \ref{ex:butterfly} shows that in general such a cut need not be empty. Figure \ref{CHeart4} illustrates this case (together with the additional features of the limit map of the alternating process).
\begin{figure}[h!]
    \centering
    \includegraphics[width=1.0\textwidth]{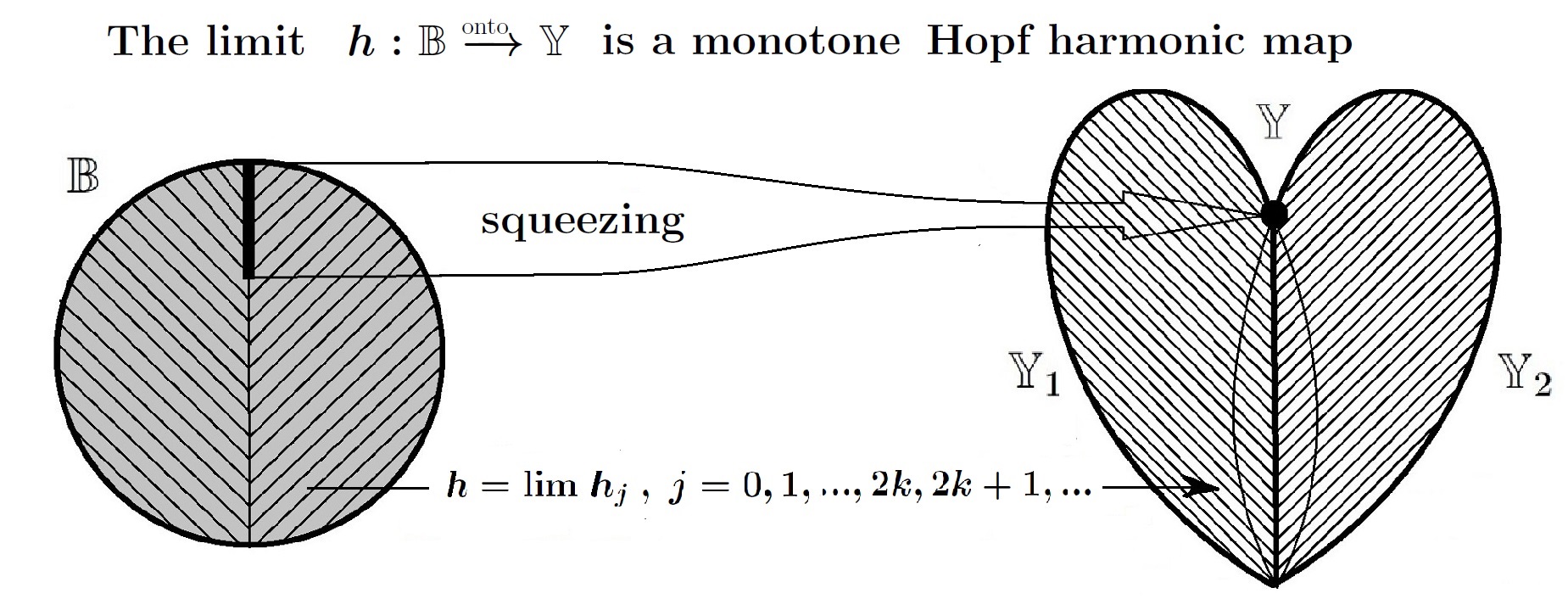}
    \caption{Squeezing phenomenon for a symmetric initial homeomorphism $\, g :  \mathbb B \onto \mathbb Y\,$.}\label{CHeart4}
\end{figure}

The idea below is reminiscent  of the Schwartz alternating process.

\subsection{The iteration process}

We shall construct, by induction, a sequence of homeomorphisms $\,h_j \in \mathscr H_g (\overline{\mathbb B}, \overline{\Y})$. The induction begins with $\,h_0 \equiv g\,$, see Figure \ref{CHeart1}, and continues with mappings denoted by $\,h_1, h_2, \dots , h_{2k-1}, h_{2k}, \dots$ for $k=1,2, \dots\,$.\\

\textsf{Definition of $h_1 \colon \overline{\mathbb B} \onto \overline{\Y}$}
\[h_1 = \begin{cases}
\textnormal{harmonic replacement of } h_0 \colon h_0^{-1} (\Y_1) \onto \Y_1 \\
h_0 \; \textnormal{ in } \overline{\mathbb B} \setminus h_0^{-1} (\Y_1)
\end{cases}\]
Hereafter the term harmonic replacement of a map $\,f \in \mathscr C (\overline{\Omega}, \C)\,$ refers to a map  $\,\tilde{f} \in \mathscr C (\overline{\Omega}, \C)\,$ which is harmonic in $\,\Omega\,$ and coincides with $\,f\,$ on $\,\partial \Omega\,$.

\begin{figure}[h!]
\includegraphics[width=1.0\textwidth]{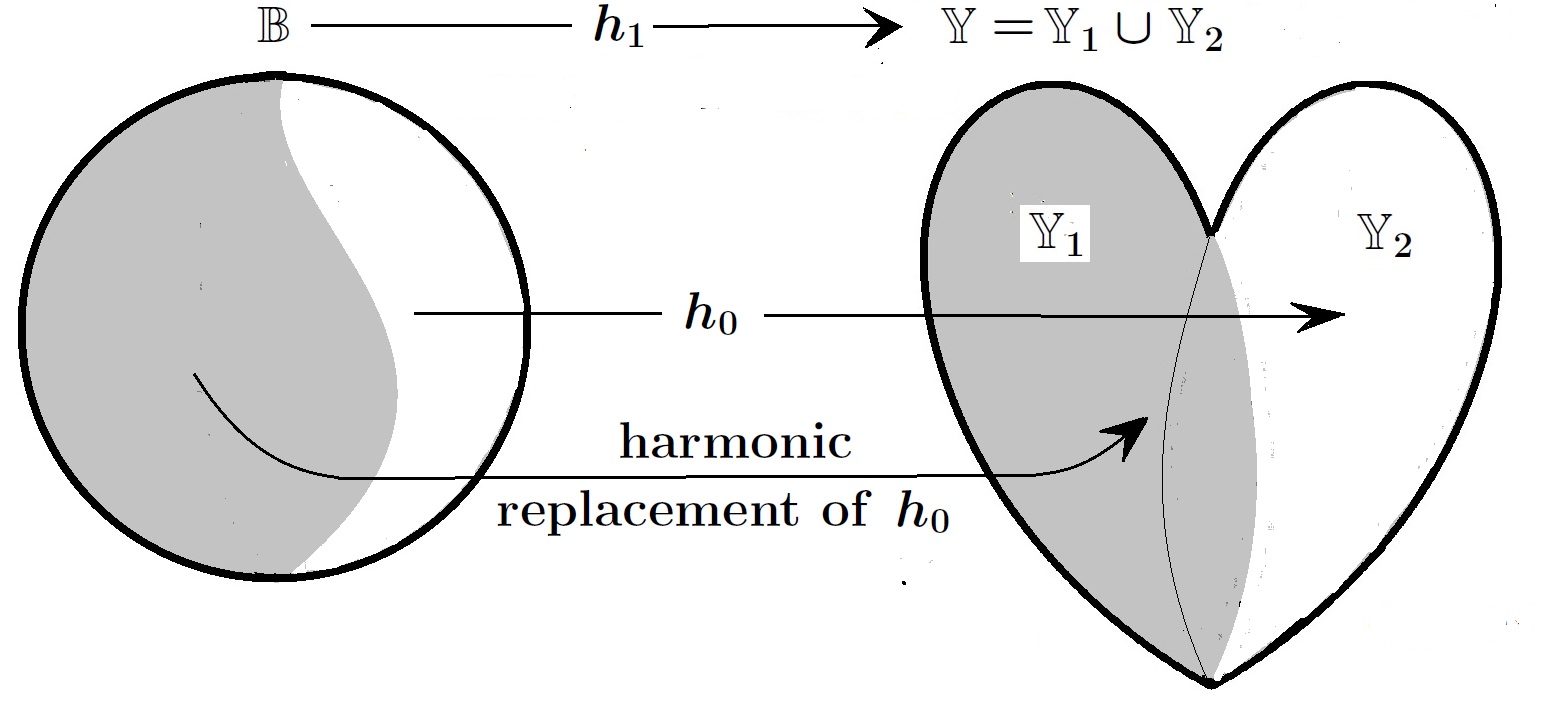}
\caption{First harmonic replacement; the map $h_1 \colon h_0^{-1} (\Y_1) \onto \Y_1$}\label{h1}
\end{figure}
Note that
\[\mathcal E [h_1] = \int_{\mathbb B} \abs{Dh_1}^2 \le  \int_{\mathbb B} \abs{Dh_0}^2 = \mathcal E [h_0]  \, . \]
\textsf{Definition of $h_2 \colon \overline{\mathbb B} \onto \overline{\Y}$}

\[h_2 = \begin{cases}
\textnormal{harmonic replacement of } h_1 \colon h_1^{-1} (\Y_2) \onto \Y_2 \\
h_1 \; \textnormal{ in } \overline{\mathbb B} \setminus h_1^{-1} (\Y_2)
\end{cases}\]

Thus  $\,h_2\,$ is harmonic in  $\,\mathbb B \setminus \partial h_1^{-1} (\Y_2)$.

\begin{figure}[h!]
    \centering
    \includegraphics[width=1.0\textwidth]{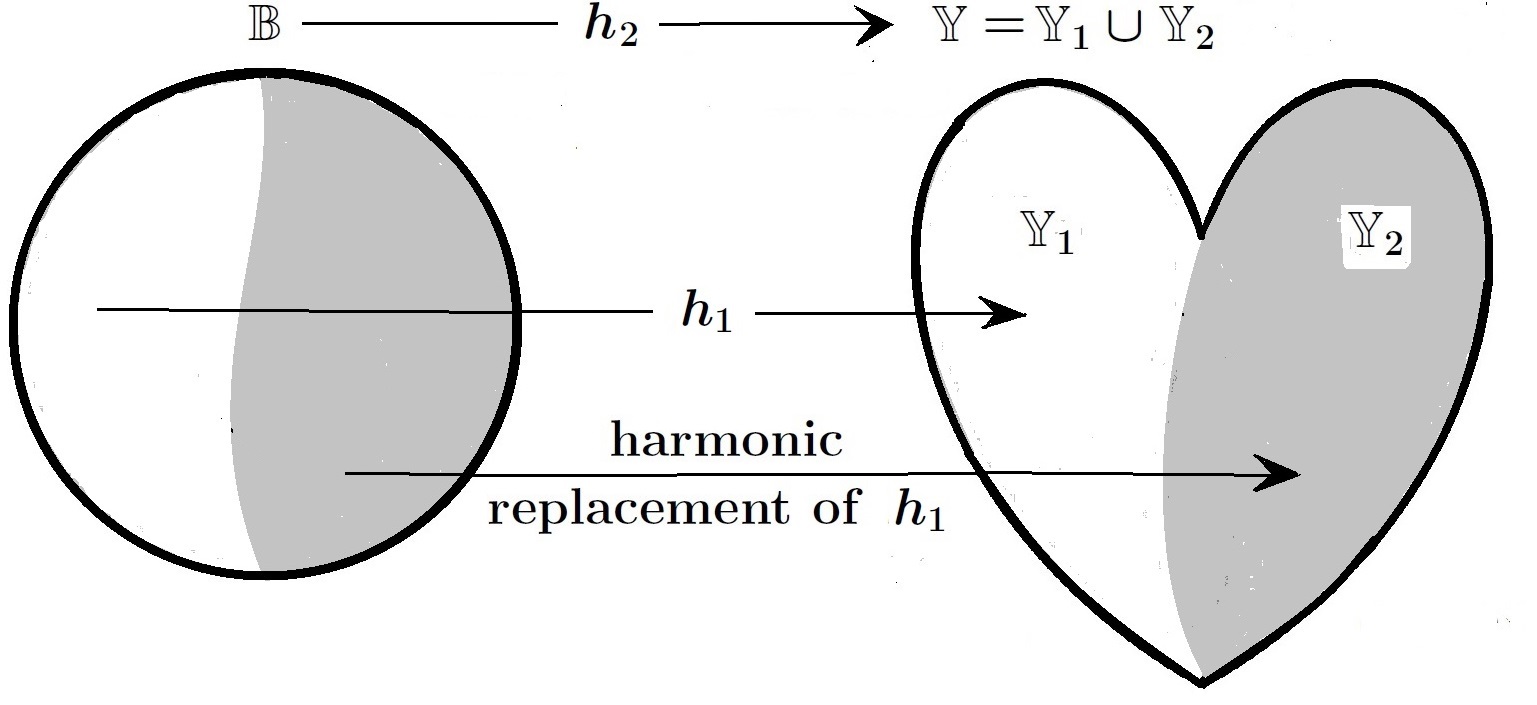}
    \caption{Second harmonic replacement;  the map $\,h_2 \colon h_1^{-1} (\Y_2) \onto \Y_2 $}\label{h2}
\end{figure}

Again, we have
\[\mathcal E [h_2] \le \mathcal E [h_1] \le \mathcal E [h_0] \, .\]

Now, suppose we have defined $h_{2k-1}$ and $h_{2k}$ for some $k \ge 1$.\\
\textsf{Definition of $h_{2k+1} \colon \overline{\mathbb B} \onto \overline{\Y}$,}
\[h_{2k+1} = \begin{cases}
\textnormal{harmonic replacement of } h_{2k} \colon h_{2k}^{-1} (\Y_1) \onto \Y_1 \\
h_{2k} \; \textnormal{ in } \overline{\mathbb B} \setminus h^{-1}_{2k} (\Y_1)
\end{cases}\]
\textsf{Definition of $h_{2k+2} \colon \overline{\mathbb B} \onto \overline{\Y}$,}
\[h_{2k+2} = \begin{cases}
\textnormal{harmonic replacement of } h_{2k+1} \colon h_{2k+1}^{-1} (\Y_2) \onto \Y_2 \\
h_{2k+1} \; \textnormal{ in } \overline{\mathbb B} \setminus h^{-1}_{2k+1} (\Y_2)
\end{cases}\]
In each step of our construction we lower the Dirichlet energy, unless the map $\,h_j\,$ turns out to be harmonic, in which case the process terminates,
\[\mathcal E [h_0] \ge \dots \ge \mathcal E [h_{2k+1}] \ge \mathcal E [h_{2k}] \ge \dots \, , \qquad k=0,1,2, \dots\]

 Furthermore, $h_{2k+1}$ is a harmonic homeomorphism from $h^{-1}_{2k} (\Y_1)$ onto $\Y_1$ and from $\Y \setminus \overline{h^{-1}_{2k} (\Y_1)}$ onto $\Y \setminus  \overline{ \Y_1}$. Similarly,  $h_{2k+2}$ is a harmonic homeomorphism from $h^{-1}_{2k+1} (\Y_2)$ onto $\Y_2$ and from $\Y \setminus \overline{h^{-1}_{2k+1} (\Y_2)}$ onto $\Y \setminus  \overline{ \Y_2} $.
\subsection{The question of convergence}

The family $\{h_j\}$ is equicontinuous. This follows from the uniform bound of the modulus of continuity; namely,
\[ \abs{h_j (x_1) - h_j (x_2)}^2 \le    \frac{C_{\X, \Y}\,\int_{\mathbb B} \abs{Dg(x)}^2  \, \dtext x}{\log \left(e + 1/\abs{x_1-x_2} \right)}  \]
for all $x_1, x_2 \in \overline{\mathbb B}$, see~\eqref{eq:modofcont}. In particular, $\{h_j\}$ contains a subsequence converging uniformly on $\overline{\mathbb B}$. An obvious question to ask is whether the entire sequence  $\{h_j\}$  converges; precisely,

\begin{question}
Does  $\{h_j\}$ converge uniformly (consequently, weakly in $\W^{1,2} (\mathbb B, \mathbb C)$) to a mapping $h \colon \overline{\mathbb B}  \onto \overline{\Y}$ (obviously monotone) of   smallest Dirichlet energy within the class $\mathscr M_g (\overline{\mathbb B} , \overline{\Y})$?
\end{question}
The answer is not known to us in full generality. Whenever the answer to this question is "yes", the limit map $\,h\,$ turns out to be the unique monotone Hopf-harmonic solution, as stated in Theorem~\ref{thm:main}.

\end{document}